\documentclass{article}
\usepackage{graphicx} % Required for inserting images
\usepackage{amsfonts}
\usepackage[utf8]{inputenc}
\usepackage{xcolor}
\usepackage{etoolbox}
\usepackage{amssymb}
\usepackage{tikz-cd}
\usepackage{amscd,amsmath}
\usepackage{amssymb}
\usepackage{amsthm}
\usepackage{enumerate}
\usepackage{mathptmx} % font Times New Roman (simile)
\usepackage{hyperref}

\newcounter{claim}[section]
\AtEndEnvironment{proof}{\setcounter{claim}{0}} 
\newenvironment{claim}[1][]{\refstepcounter{claim}\par\medskip\noindent\textit{Claim~\theclaim:}\space#1}

\def\Ind#1#2{#1\setbox0=\hbox{$#1x$}\kern\wd0\hbox to 0pt{\hss$#1\mid$\hss}
\lower.9\ht0\hbox to 0pt{\hss$#1\smile$\hss}\kern\wd0}

\def\notind#1#2{#1\setbox0=\hbox{$#1x$}\kern\wd0
\hbox to 0pt{\mathchardef\nn=12854\hss$#1\nn$\kern1.4\wd0\hss}
\hbox to 0pt{\hss$#1\mid$\hss}\lower.9\ht0 \hbox to 0pt{\hss$#1\smile$\hss}\kern\wd0}

\newtheorem{theorem}{Theorem}[section]
\newtheorem{corollary}[theorem]{Corollary}

\theoremstyle{definition}
\newtheorem{definition}[theorem]{Definition}
\newtheorem*{notation}{Notation}
\newtheorem*{CZC}{Cherlin--Zilber Algebraicity Conjecture}
\newtheorem{lemma}[theorem]{Lemma}

\newtheorem{proposition}[theorem]{Proposition}
\newtheorem{remark}[theorem]{Remark}
\newtheorem{question}{Question}[section]
\newenvironment{claimproof}[1]{\par\noindent\textit{Proof:}\space#1}{\hfill $\blacksquare$\medskip\par}

\def\aut{\operatorname{Aut}}
\def\ker{\operatorname{ker}}
\def\RM{\operatorname{RM}}
\def\tp{\operatorname{tp}}
\def\isog{\approxeq}
\def\B{\mathfrak B}
\def\A{\mathfrak A}
\def\ann{\operatorname{Ann}}
\def\im{\operatorname{im}}

\title{Bisoluble and Binilpotent Skew Braces of Finite Morley Rank}
\author{M. Ferrara -- M. Invitti -- M. Trombetti -- F.O. Wagner}

\begin{document}
\newenvironment{claimm}{\par\medskip\noindent\textit{Claim:}\space}{\par}

\maketitle

\begin{abstract}
\noindent A connected skew brace of finite Morley rank is left nilpotent (resp. weakly soluble) whenever the additive and multiplicative groups are nilpotent (resp. are soluble and the left chief length is at most $3$).
\end{abstract}

\section{Introduction}
In 2007, Rump \cite{rump2007braces} defined the notion of {\em brace} (a generalization of radical rings) to construct certain set-theoretic solutions of the Yang--Baxter equation, a very important functional equation in mathematical physics and other mathematical fields. Guarnieri and Vendramin introduced skew braces in \cite{guarnieri2017skew}, extending the brace framework to the case of non-abelian additive groups.

\begin{definition}
A \emph{skew brace} is a set $B$ endowed with two group laws $+$ and $\circ$ such that the \emph{skew left distributivity law} holds: $$a\circ (b+c)=a\circ b-a+a\circ c$$ for all $a,b,c\in B$. Moreover, if $\mathfrak X$ is any class of groups and the additive group $B_+\in\mathfrak X$, then $B$ is said to be {\em of type $\mathfrak X$}. In this terminology, Rump's braces are just skew braces of abelian type. If both the additive and the multiplicative group are $\mathfrak X$, we call a skew brace {\em bi-$\mathfrak X$}.\end{definition}
 
Since then, the interest in skew braces has increased, not only in relation to the Yang--Baxter equation, but also as algebraic structures in their own right. However, most of the research has concentrated on finite skew braces.
 
The model-theoretic study of infinite skew braces is a very recent research direction, reflecting the relative novelty of skew braces themselves. The foundations of a model-theoretic approach were laid in \cite{braces}. As in the case of groups, skew braces are studied under various additional model-theoretic tameness conditions which ensure that the class under consideration is reasonably well-behaved. Of course, since skew braces carry two group structures, a good model-theoretic understanding of the relevant groups is required. 
 
In this paper we shall study solubility and nilpotency of skew braces under the assumption of finite Morley rank, a model-theoretic tameness condition inspired by algebraic geometry and implying a notion of dimension on definable sets, namely Morley rank (see Definition \ref{d:fMR}).
 
Not much is known about the mutual influence of the additive and multiplicative
groups of a skew brace, or about how the two group structures constrain the full skew brace structure. For example, it is apparently unknown if there exists a skew brace whose multiplicative group is nilpotent but whose additive group is not soluble (see~\cite{adv}). 

Our results contribute to this question by linking additive and multiplicative nilpotency to brace solubility and nilpotency (see Definition \ref{d:s,n}). Recall that a group is {\em connected} if it has no definable subgroup of finite index; by \cite[Theorem 3.7]{braces}, in a skew brace of finite Morley rank (or more generally in a {\em stable} skew brace) additive and multiplicative connectivity coincide.  We shall show:\begin{enumerate}
\item A connected binilpotent skew brace of finite Morley rank is left nilpotent.
\item A connected bisoluble skew brace of finite Morley rank is weakly
 soluble, provided its left chief length (Definition \ref{d:lcl}) is at most~$3$.
\end{enumerate}
In particular, connected skew braces of Morley rank $2$ are soluble, and connected bisoluble skew braces of Morley rank $3$ are weakly soluble. 

It should be noted that left nilpotency of a binilpotent skew brace has also been shown under the assumptions of finiteness \cite[Theorem 4.8]{cedo2019skew} or $\mathfrak{M}_c$ and $\omega$-categoricity \cite{braces}, suggesting a possible general behaviour for tame skew braces (see Section \ref{sec:6}).

\subsection{Layout of the paper}
In Section \ref{sec:2}, we recall the preliminary concepts and results necessary for the analysis of skew braces of finite Morley rank. The only new result is Lemma \ref{Kequi}, which is fundamental in the analysis of bisoluble and binilpotent skew braces.

In Section \ref{sec:3}, we introduce and study the concept of left chief series. This is the equivalent of the chief series for groups in the context of skew braces. We prove a version of the Jordan--H\"older theorem for skew braces. This allows us to define the \emph{left chief length} as the length of a left chief series, a concept that is extensively used in our study of bisoluble and binilpotent skew braces.

Section \ref{sec:4} is devoted to the analysis of bisoluble connected skew braces of finite Morley rank and left chief length at most $3$, and Section \ref{sec:5} to the study of binilpotent skew braces of finite Morley rank.

Finally, Section \ref{sec:6} contains some concluding remarks and open questions.

\section{Preliminaries}\label{sec:2}
This section aims to provide the reader with the necessary definitions and fundamental properties of skew braces and groups of finite Morley rank needed for the rest of the paper.

\subsection{Skew braces}
In this section, we recall the basic properties of skew braces. Moreover, we prove some easy facts that will later be useful. Recall from the introduction that a ({\em left}) {\em skew brace} $(B,+,\circ)$ is a set $B$ with two group operations $+,\circ$ such that, for all $a,b,c\in B$, the {\em skew left distributivity} law holds:
$$a\circ(b+c)= a\circ b-a+a\circ c.$$
Note that this implies that the additive and multiplicative unit coincide; we denote it by $0$.
We usually denote a skew brace simply by $B$, and its additive and multiplicative
groups by $B_+$ and $B_\circ$, respectively.
For $b\in B$ we define the function $\lambda_b:B\to B$ as
$$\lambda_b:a\mapsto \lambda_b(a):=-b+b\circ a.$$ 
It follows immediately from skew left distributivity that $\lambda_b$ is an additive automorphism of $B$. Moreover, we define the map $\lambda:B\to\aut(B_+)$ as follows:
$$\lambda:b\mapsto \lambda_b\in \aut(B_+).$$
It is immediate to verify that $\lambda$ is a homomorphism from $B_\circ$ to $\aut(B_+)$. If $a,b\in B$, then we also define the $\ast$-operation as follows: 
$$a\ast b=-a+a\circ b-b=\lambda_a(b)-b.$$
This operation measures the discrepancy between $+$ and $\circ$, since $a\ast b=0$ if and only if $\lambda_a(b)=b$ if and only if $a+b=a\circ b$. We recall some useful properties of $\ast$:
\begin{enumerate}[(i)]
	\item $a\ast (b+c)=a\ast b+b+a\ast c-b$;
	\item $(a\circ b)\ast c=a\ast (b\ast c)+b\ast c+a\ast c$.
\end{enumerate}

A subset $X$ of $B$ is {\em $\lambda$-invariant} if $\lambda_b(X)\subseteq X$ for all $b\in B$. Note that this also holds for $b^{-1}$, so $X=\lambda_b\lambda_{b^{-1}}(X)\subseteq\lambda_b(X)$ and we must have equality. Thus $X$ is $\lambda$-invariant if and only if $b\circ X=b+X$ for all $b\in B$. In particular, a $\lambda$-invariant set $X$ is an additive subgroup if and only if it is a multiplicative subgroup.
 
A {\em skew sub-brace} of $B$ is a subset $A$ which is both an additive and a multiplicative subgroup. A skew sub-brace $A$ of $B$ is a {\em left ideal } if it is $\lambda$-invariant. A left ideal $A$ of $B$ is a {\em strong left ideal} if it is additively normal in $B$; a strong left ideal is an {\em ideal} if it is also multiplicatively normal. In this case the induced structure on the quotient $B/A$ is again a skew brace (recall that by $\lambda$-invariance, additive and multiplicative cosets coincide); conversely the kernel of any skew brace homomorphism is an ideal and the isomorphism theorems hold. Note that if $A$ is a (strong) left ideal in $B$ and $L$ is an additively characteristic subgroup of $A$, then $L$ is again a (strong) left ideal.

Given two strong left ideals $L$ and $M$ of a skew brace $B$, we can consider the additive quotient $L/M$. We generally omit the subscript $+$ since the additive quotient does not carry a multiplicative structure.

\begin{notation} If $B$ is a brace, we denote the semi-direct product naturally associated with it by the same letter but in gothic typeface: $\B:=B_+\rtimes_{\lambda} B_\circ$. If $A\le B$ is a sub-brace, $\mathfrak A=A_+\rtimes_\lambda A_\circ$ denotes the corresponding subgroup, and $\mathfrak A_0:=A_+\rtimes_\lambda\{0\}$. The same holds for sub-braces $L,M,\ldots$ of $B$: We have $\mathfrak L$, $\mathfrak L_0$, $\mathfrak M$ and $\mathfrak M_0$, etc., all subgroups of~$\B$.\end{notation} 

The previous definitions may now be reformulated in the semidirect product.

\begin{lemma}\label{NormalG}
Let $B$ be a skew brace, and $A\subseteq B$ a subset.
\begin{itemize}
\item The action of $\B$ on $B_+$ by conjugation is given by 
$(b,b_1)\cdot a=b+\lambda_{b_1}(a)-b$.
\item $A$ is a skew sub-brace if and only if $\mathfrak{A}$ is a subgroup of $\B$.
\item $A$ is a strong left ideal if and only if $\mathfrak{A_0}:=A_+\times \{0\}$ is a normal subgroup of $\B$, if and only if $A$ is an additive subgroup invariant under the action of $\B$ on $B_+$.
\item $A$ is an ideal if and only if $\mathfrak{A}$ is a normal subgroup in $\B$.
\end{itemize}
\end{lemma}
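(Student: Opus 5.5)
Everything here is a direct computation in $\B=B_+\rtimes_\lambda B_\circ$, whose multiplication is $(a,a_1)(b,b_1)=(a+\lambda_{a_1}(b),\,a_1\circ b_1)$, with identity $(0,0)$ and inverse $(a,a_1)^{-1}=(-\lambda_{a_1^{-1}}(a),\,a_1^{-1})$, where $a_1^{-1}$ is the $\circ$-inverse. We identify $B_+$ with the normal subgroup $\B_0=B_+\times\{0\}$ via $a\mapsto(a,0)$. The plan is to prove the four bullets in order, each one feeding into the next.

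\emph{First bullet.} Compute $(b,b_1)(a,0)(b,b_1)^{-1}$. The first two factors give $(b+\lambda_{b_1}(a),\,b_1)$. Multiplying on the right by $(-\lambda_{b_1^{-1}}(b),\,b_1^{-1})$ gives first coordinate $b+\lambda_{b_1}(a)-\lambda_{b_1}\lambda_{b_1^{-1}}(b)=b+\lambda_{b_1}(a)-b$, since $\lambda$ is a homomorphism on $B_\circ$, and second coordinate $0$. This is the stated formula.

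\emph{Second bullet.} Suppose $A$ is a sub-brace. Then $\mathfrak A$ is closed under products and inverses, because $\lambda_{a_1}(b)\in A$ for $a_1,b\in A$. This holds since $\lambda_{a_1}(b)=-a_1+a_1\circ b$ and $A$ is closed under both operations. For the converse, suppose $\mathfrak A=A\times A$ is a subgroup. Intersecting with the subgroups $\B_0$ and $\{0\}\rtimes B_\circ$ shows that $A$ is both an additive and a multiplicative subgroup. This is the one step needing care: one has to read off closure from the two coordinates separately, and the intersection argument does exactly that.

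\emph{Third bullet.} Normality of $\mathfrak A_0$ means that conjugating by every $(b,b_1)$ preserves $A\times\{0\}$. By the first bullet this says $b+\lambda_{b_1}(A)-b\subseteq A$ for all $b,b_1$. Taking $b_1=0$ gives additive normality, and taking $b=0$ gives $\lambda$-invariance. Conversely, these two properties together give invariance under the action. By the remark earlier in the paper, an additive subgroup that is $\lambda$-invariant is automatically a multiplicative subgroup, hence a left ideal. Additive normality then makes it a strong left ideal. This yields all three equivalences.

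\emph{Fourth bullet.} If $\mathfrak A\trianglelefteq\B$, then $\mathfrak A_0=\mathfrak A\cap\B_0$ is normal, so $A$ is a strong left ideal by the third bullet. Conjugating $(0,a_1)$ by $(0,b_1)$ gives $(0,b_1\circ a_1\circ b_1^{-1})$, so $A$ is multiplicatively normal; hence $A$ is an ideal. Conversely, suppose $A$ is an ideal. The subgroup $\mathfrak A$ is generated by $\mathfrak A_0$ and $\{0\}\times A$, so it suffices to check that $\B$ conjugates each generator into $\mathfrak A$. For $\mathfrak A_0$ this is the third bullet. For $(0,a_1)$ we conjugate by the generators $(b,0)$ and $(0,b_1)$ of $\B$. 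Conjugation by $(0,b_1)$ stays in $\{0\}\times A$ by multiplicative normality. Conjugation by $(b,0)$ gives $(b-\lambda_{a_1}(b),\,a_1)=(-(a_1\ast b)',\,a_1)$, where we need $b-\lambda_{a_1}(b)\in A$. To see this, write $\lambda_{a_1}(b)=-a_1+a_1\circ b$. Multiplicative normality gives $a_1\circ b=b\circ a'$ for some $a'\in A$. Since $b\circ a'=b+\lambda_b(a')\in b+A$ by $\lambda$-invariance, and $A$ is additively normal, $\lambda_{a_1}(b)\in -a_1+b+A=b+A$, so $b-\lambda_{a_1}(b)\in A$. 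This coset computation is the only mildly nontrivial point in the whole lemma.
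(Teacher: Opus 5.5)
Your verification is correct and is exactly the routine computation in $\B=B_+\rtimes_\lambda B_\circ$ that the paper leaves to the reader; its proof is just ``Obvious''. You also handle the one non-formal point, namely that for an ideal $b-\lambda_{a_1}(b)=-(a_1\ast b)\in A$ (the prime in $(a_1\ast b)'$ is superfluous, since this equality holds exactly).
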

\begin{proof}
Obvious.
\end{proof}

\begin{lemma}
Let $B$ be a skew brace. Then $K=\ker(\lambda)$ is a skew sub-brace of $B$.
\end{lemma}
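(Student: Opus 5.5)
Since $\lambda\colon B_\circ\to\aut(B_+)$ is a group homomorphism, $K=\ker(\lambda)$ is a normal subgroup of $B_\circ$. In particular it is a multiplicative subgroup. It remains to show that $K$ is an additive subgroup of $B$. The plan is to use the fact that on $K$ the two operations coincide: for $a\in K$ and any $b\in B$ we have $\lambda_a(b)=b$, that is, $a\circ b=a+b$. So closure of $K$ under one operation should pass to the other.

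First, $0\in K$, since $\lambda_0=\mathrm{id}$. For additive closure, take $a,b\in K$. Then $a+b=a\circ b\in K$, because $K$ is a multiplicative subgroup. For additive inverses, let $a\in K$ and write $\bar a$ for its multiplicative inverse, so $\bar a\in K$. Since $a\in K$, we get $a+\bar a=a\circ\bar a=0$. Hence $-a=\bar a\in K$. Therefore $K$ is an additive subgroup, and so a skew sub-brace of $B$.

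Alternatively, the statement can be phrased via Lemma \ref{NormalG}: one checks that $\{(k,k):k\in K\}$ corresponds to a subgroup of $\B$. The direct argument above is simpler, and I would present that one. I expect no real obstacle; the only point needing care is to use $\lambda_a=\mathrm{id}$ with $a$ placed on the left, so that $a\circ x=a+x$ holds for every $x\in B$.
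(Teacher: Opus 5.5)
Your proof is correct and is essentially the paper's. Additive closure comes from $a+b=a\circ b$. For inverses, the paper notes $a\circ(-a)=a-a=0$ and expands $(a\circ(-a))\ast b$ via $(x\circ y)\ast z=x\ast(y\ast z)+y\ast z+x\ast z$ to get $(-a)\ast b=0$; this is just a hands-on check of what you obtain more directly from $-a=\bar a\in K$.

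You are right to drop the aside about $\{(k,k):k\in K\}$: $b\mapsto(b,b)$ is a homomorphism $B_\circ\to\B$ for every skew brace, so that set is a subgroup for trivial reasons and gives no information about additive closure of $K$.
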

\begin{proof}
Of course, $K$ is a (normal) subgroup of $B_\circ$. If $x,y\in K$, then $x+y=x\circ y\in K$. Moreover, if $a\in K$ and $b\in B$, then
$$0=(a-a)\ast b=(a\circ(-a))\ast b=a\ast ((-a)\ast b)+(-a)\ast b+a\ast b=(-a)\ast b.$$ Thus, $-a\in K$ and hence $K$ is a skew sub-brace.
\end{proof}

When working with group-theoretic concepts (such as centraliser, normaliser etc.), we shall use super- and subscripts to indicate whether they are intended additively or multiplicatively.

\begin{lemma}
Let $B$ be a skew brace and $H\subseteq L$ strong left ideals of $B$. Then $C_B^+(L/H)$ is a strong left ideal of $B$.
\end{lemma}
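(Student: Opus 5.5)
We work with the characterisation from Lemma \ref{NormalG}: a subset is a strong left ideal if and only if it is an additive subgroup of $B_+$ invariant under the conjugation action of $\B$ on $B_+$, namely $(b,b_1)\cdot a=b+\lambda_{b_1}(a)-b$. The centraliser here is
$$C:=C_B^+(L/H)=\{x\in B : x+l-x-l\in H \text{ for all } l\in L\}.$$
For this to make sense we use that $H$ is additively normal in $B$, which holds since $H$ is a strong left ideal. Equivalently, $C$ is the kernel of the action of $B_+$ by conjugation on the additive quotient $L/H$. That action is well defined because $L$ and $H$ are both additively normal in $B$.

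First, $C$ is an additive subgroup of $B$, since it is the kernel of a group action of $B_+$. This gives closure under sums and additive inverses.

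Second, $C$ is invariant under conjugation by $B_+$. It is in fact additively normal, as the kernel of a homomorphism $B_+\to\operatorname{Sym}(L/H)$.

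Third, $C$ is $\lambda$-invariant. Fix $b\in B$ and $x\in C$, and let $l\in L$. Because $L$ is $\lambda$-invariant and $\lambda_b$ is bijective on $L$, we can write $l=\lambda_b(l')$ with $l'\in L$. Since $\lambda_b$ is an additive automorphism,
$$\lambda_b(x)+l-\lambda_b(x)-l=\lambda_b(x+l'-x-l').$$
The element $x+l'-x-l'$ lies in $H$, and $\lambda_b(H)=H$. Hence the commutator $\lambda_b(x)+l-\lambda_b(x)-l$ lies in $H$, so $\lambda_b(x)\in C$.

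Combining the three steps, $C$ is an additive subgroup invariant under the full action of $\B$, since that action is generated by additive conjugation and the maps $\lambda_{b_1}$. By Lemma \ref{NormalG}, $C$ is a strong left ideal. Here we also use the earlier remark that a $\lambda$-invariant additive subgroup is automatically a multiplicative subgroup, so $C$ is a sub-brace.

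There is no serious obstacle in this argument. The only points that need care are checking that the conjugation action on $L/H$ is well defined, which uses additive normality of both $L$ and $H$, and that $\lambda_b$ maps $L$ onto $L$, which uses the remark in the paper that $\lambda$-invariance gives equality $\lambda_b(X)=X$.
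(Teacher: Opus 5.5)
Your proof is correct and follows the same route as the paper. The paper simply notes that $C_B^+(L/H)$ is a normal additive subgroup and is $\lambda$-invariant because each $\lambda_b$ is an additive automorphism preserving $L$ and $H$; you have written out these checks in full, including that the action on $L/H$ is well defined and that $\lambda_b(L)=L$.
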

\begin{proof} Clearly $C_B^+(L/H)$ is a normal additive subgroup; it is $\lambda$-invariant since $\lambda$ is an additive automorphism and both $L$ and $H$ are $\lambda$-invariant.
\end{proof}

The following result gives some equivalent conditions for a subset to be an ideal. 
\begin{lemma}\label{lemuseful}
Let $B$ be a skew brace and $X$ a subset of~$B$. If any three of the following four properties hold, then $X$ is an ideal:
\begin{enumerate}[(i)]
\item $X$ is a normal additive subgroup of $B$.
\item $X$ is a normal multiplicative subgroup of $B$.
\item $X$ is $\lambda$-invariant.
\item $X\ast B\subseteq X$.
\end{enumerate} 
\end{lemma}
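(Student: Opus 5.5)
We go through the four possible triples of hypotheses and show that each one gives all of (i)--(iv); by definition, (i)+(ii)+(iii) already says $X$ is an ideal. The whole argument rests on one identity, which comes straight from the definitions $\lambda_a(b)=-a+a\circ b$ and $a\ast b=\lambda_a(b)-b$. For $a\in B$ and $x\in X$,
$$a\circ x\circ a^{-1}=a+\lambda_a(x\circ a^{-1})=a+\lambda_a(x+\lambda_x(a^{-1})).$$
Writing $\lambda_x(a^{-1})=x\ast a^{-1}+a^{-1}$ gives
$$a\circ x\circ a^{-1}=a+\lambda_a(x)+\lambda_a(x\ast a^{-1})+\lambda_a(a^{-1}).$$
Since $\lambda_a(a^{-1})=-a+a\circ a^{-1}=-a$, this becomes
$$a\circ x\circ a^{-1}=a+\lambda_a(x)+\lambda_a(x\ast a^{-1})-a. \qquad (\dagger)$$
We also use that $\lambda$-invariance of $X$ means $b\circ X=b+X$ for all $b$, so under (iii) the additive and multiplicative subgroup conditions are equivalent.

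\emph{Case (i),(iii),(iv).} Here (iii) turns (i) into a multiplicative subgroup statement. In $(\dagger)$ the terms $\lambda_a(x)$ and $\lambda_a(x\ast a^{-1})$ lie in $X$, by (iii) and by (iv) followed by (iii). Additive normality (i) then puts $a\circ x\circ a^{-1}$ in $X$, which is (ii).

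\emph{Case (ii),(iii),(iv).} Now (iii) turns (ii) into an additive subgroup statement. Rearranging $(\dagger)$ gives
$$a+\bigl(\lambda_a(x)+\lambda_a(x\ast a^{-1})\bigr)-a\in X.$$
As $x$ runs over $X$, the element $y=\lambda_a(x)+\lambda_a(x\ast a^{-1})$ can be arranged to be any prescribed element of $X$. Indeed, $x\mapsto\lambda_a(x)$ is a bijection of $X$ by (iii), so one could try to solve for $x$ given $y$. A cleaner route is to get normality from a multiplicative computation instead. For $y\in X$ set $x=a^{-1}\circ y\circ a\in X$, and use $a+y-a=a\circ\lambda_{a}^{-1}(y)\circ\ldots$; expanding both sides via $a+z=a\circ\lambda_a^{-1}(z)$ reduces $a+y-a$ to a product of elements of $a\circ X\circ a^{-1}=X$ and terms $\ast$-generated by $X$ and $B$, which lie in $X$ by (iv) and (iii). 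I expect this step, getting additive normality from multiplicative normality, to be the main bookkeeping obstacle. If the direct expansion fails, I would first try exploiting symmetry by applying $(\dagger)$ with $a^{-1}$ in place of $a$.

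\emph{Case (i),(ii),(iv).} We need (iii). For $x\in X$ we have $\lambda_a(x)=a\ast x+x$, so it suffices to show $a\ast x\in X$, which (iv) does not give directly since (iv) concerns $X\ast B$. Instead write
$$a\ast x=-a+a\circ x-x=-a+(a\circ x\circ a^{-1})\circ a-x.$$
With $x'=a\circ x\circ a^{-1}\in X$ by (ii), we have $x'\circ a=x'+\lambda_{x'}(a)=x'+x'\ast a+a$. Hence
$$a\ast x=-a+x'+x'\ast a+a-x,$$
which lies in $X$ by (iv) and the additive normality (i).

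\emph{Case (i),(ii),(iii).} Property (iv) follows: for $x\in X$ and $b\in B$,
$$x\ast b=-x+x\circ b-b=-x+(x\circ b\circ x^{-1})\circ x-b,$$
and a computation symmetric to the previous case, using (ii) and then (i) and (iii), places this in $X$. This case is in any event not needed for the conclusion, since (i),(ii),(iii) already make $X$ an ideal.
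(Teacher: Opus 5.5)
\textbf{Gap in the case (ii),(iii),(iv).} Your cases (i),(iii),(iv) and (i),(ii),(iv) are correct. The second is the paper's own computation, and the first is a harmless variant: the paper notes $x\circ b\in b+X$ and applies $\lambda_{b^{-1}}$. The problem is the case (ii),(iii),(iv), which the proposal does not actually prove.

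Your claim that $y=\lambda_a(x)+\lambda_a(x\ast a^{-1})$ runs over all of $X$ does not follow from bijectivity of $\lambda_a$ on $X$. The reason is that $y=\lambda_a(x+x\ast a^{-1})$, and nothing you have said controls the map $x\mapsto x+x\ast a^{-1}$. Worse, $(\dagger)$ itself says $y=-a+(a\circ x\circ a^{-1})+a$. So the set of all such $y$ is exactly $-a+X+a$, and your surjectivity claim is equivalent to the additive normality you are trying to prove. The ``cleaner route'' is left as an unfinished formula, and you concede it may fail. As written, the proposal therefore never establishes additive normality in this case.

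\textbf{The fix.} You already have what you need: read $(\dagger)$ in the other direction. First, (ii) and (iii) make $X$ an additive subgroup. Then for $x\in X$, properties (iii) and (iv) give
$$-a+(a\circ x\circ a^{-1})+a=\lambda_a(x)+\lambda_a(x\ast a^{-1})\in X .$$
By (ii), $a\circ X\circ a^{-1}=X$, so the elements $a\circ x\circ a^{-1}$ exhaust $X$ as $x$ runs over $X$. Hence $-a+X+a\subseteq X$ for every $a\in B$, which is additive normality.

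The paper makes the same move, phrased with preimages. Given $x$, it picks $x'\in X$ with $\lambda_b(x')=x$, then $x''\in X$ with $b^{-1}\circ x''\circ b=x'$. It then computes $-x''+b+x-b=x''\ast b\in X$.
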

\begin{proof}
If $X$ satisfies (i)--(iii), then $X$ is an ideal by definition. If $X$ satisfies (i), (ii) and~(iv), then for $x\in X$ and $b\in B$ there is $x'\in X$ such that $b\circ x=x'\circ b$, so
$$\lambda_b(x)=-b+b\circ x=-b+x'\circ b=-b+x'-x'+x'\circ b-b+b\in -b+x'+X+b=X.$$
Hence (iii) holds.
	
Suppose $X$ satisfies (i), (iii) and (iv). Then $X$ is a strong left ideal, and for $x\in X$ nd $b\in B$ we have $x\circ b\in x+X+b=X+b=b+X$, so
$$b^{-1}\circ x\circ b\in b^{-1}\circ(b+X)=b^{-1}\circ b-b^{-1}+b^{-1}\circ X=\lambda_{b^{-1}}(X)=X,$$
so $X$ is multiplicatively normal.
	
Finally, if $X$ satisfies (ii)--(iv), recall that (ii) and (iii) imply that $X$ is an additive subgroup. Consider $b\in B$ and $x\in X$. By (iii), there is $x'\in X$ such that $\lambda_b(x')=x$, while by (ii) there is $x''\in X$ such that $b^{-1}\circ x''\circ b=x'$. Thus, 
$$-x''+b+x-b=-x''+b+\lambda_b(x')-b=-x''+x''\circ b-b=x''\ast b\in X$$ by (iv), and hence $b+x-b\in X$, which means that $X$ is additively normal.
\end{proof}

The following lemma will be useful in Sections \ref{sec:4} and \ref{sec:5}.
\begin{lemma}\label{WelDefstar}
Let $B$ be a skew brace, $b\in B$, and $M_1\unlhd L_1$, $M_2\unlhd L_2$ be additive subgroups. Suppose $b\ast L_1\subseteq L_2$, $b\ast M_1\subseteq M_2$, $L_1\le N^+_B(M_2)$ and $L_1\le C_B^+(L_2/M_2)$. Then the map $\ast_b:x+M_1\mapsto b\ast x+M_2$ is a well-defined homomorphism from $L_1/M_1$ to $L_2/M_2$.
\end{lemma}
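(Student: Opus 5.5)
We must show two things: that the map $x+M_1\mapsto b\ast x+M_2$ is independent of the representative, and that it is additive. Both follow from property (i) of $\ast$, namely $b\ast(x+y)=b\ast x+x+b\ast y-x$. We apply it with $x,y\in L_1$ and then use the hypotheses to kill the conjugation by $x$ modulo $M_2$.

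First we check that the map lands in $L_2/M_2$ and that the quotient makes sense there. Since $b\ast L_1\subseteq L_2$, we get $b\ast x\in L_2$ for every $x\in L_1$. Since $M_2\unlhd L_2$, the coset $b\ast x+M_2$ is an element of $L_2/M_2$.

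Next, additivity. For $x,y\in L_1$, property (i) gives
$$b\ast(x+y)=b\ast x+\bigl(x+b\ast y-x\bigr).$$
The hypothesis $L_1\le C_B^+(L_2/M_2)$ means that $x$ acts trivially by conjugation on $L_2/M_2$. We should make sure this is meaningful: $x$ must normalise $L_2$ and $M_2$. Normalising $M_2$ is given by $L_1\le N^+_B(M_2)$. Normalising $L_2$ is implicit in the centraliser notation, which is read as $\{g : g+\ell-g\in \ell+M_2 \text{ for all } \ell\in L_2\}$. With this reading, $x+b\ast y-x\in b\ast y+M_2$. Hence
$$b\ast(x+y)\in b\ast x+b\ast y+M_2,$$
which is the homomorphism property.

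Finally, well-definedness. Suppose $x,x'\in L_1$ with $x'=x+m$ for some $m\in M_1$. Applying property (i) again,
$$b\ast x'=b\ast x+x+b\ast m-x.$$
By hypothesis $b\ast m\in M_2$. Since $x\in L_1\le N^+_B(M_2)$, the conjugate $x+b\ast m-x$ also lies in $M_2$. Therefore $b\ast x'\in b\ast x+M_2$, and the two representatives give the same coset. Note that this step uses only $L_1\le N^+_B(M_2)$, whereas additivity uses only the centralising hypothesis.

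The only delicate point is this interpretation of $C_B^+(L_2/M_2)$ as the set of elements normalising $L_2$ and $M_2$ and acting trivially on the quotient. Once that is fixed, the argument consists only of the two applications of identity (i) above. No earlier lemma beyond the basic $\ast$-identities is needed.
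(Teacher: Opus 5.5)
Your proof is correct and takes essentially the same route as the paper: both rest on identity (i) together with $b\ast M_1\subseteq M_2$, $L_1\le N^+_B(M_2)$ and $L_1\le C^+_B(L_2/M_2)$, read exactly as you read them. The only difference is presentation: the paper expands $b\ast(\ell+\ell'+m)$ in one computation to get well-definedness and additivity together, whereas you treat them separately, which also shows which hypothesis each part needs.
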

\begin{proof}For $\ell,\ell'\in L_1$ and $m\in M_1$ we have:
$$\begin{aligned}b\ast (\ell+\ell'+m)&=b\ast\ell+\ell+b\ast(\ell'+m)-\ell=b\ast\ell+\ell +b\ast\ell'+\ell'+b\ast m-\ell'-\ell\\
&\in b\ast\ell+\ell +b\ast\ell'+\ell'+M_2-\ell'-\ell=b\ast\ell+\ell +b\ast\ell'-\ell+M_2\\
&=b\ast\ell+b\ast\ell'+M_2,\end{aligned}$$
so the map is well-defined and additive.	
\end{proof}

Now, let $B$ be a skew brace. If $+_{{op}}$ denotes the opposite operation of $+$, then $B^{op}=(B,+_{op},\circ)$ (or $B^{opp}$ as denoted in \cite{koch2023skew}) is easily seen to be a skew brace too. 
The $\lambda$-function of~$B^{op}$ is usually denoted by $\mu$, so 
$\mu:B_\circ\to \aut(B_+)$ with $\mu_b(a)=b\circ a-b$. Clearly, any (normal) subgroup of $B_+$ is again a (normal) subgroup in $B_{+_{op}}$, and a normal additive subgroup is $\lambda$-invariant if and only if it is $\mu$-invariant. It follows easily that skew sub-braces and (strong left) ideals in $B$ and $B^{op}$ coincide, and a skew brace homomorphism $B\to A$ is also a skew brace homomorphism $B^{op}\to A^{op}$. However, left ideals of $B$ and $B^{op}$ may differ.

Let $A=A_+$ be an additive quotient of two strong left ideals of a skew brace $B$. Then $\lambda_b$ induces an automorphism $\lambda_b^A$ of $A_+$ for every $b\in B$, and $\lambda$ induces a homomorphism $\lambda^A:B_\circ\to\aut(A_+)$. We even have a natural action of $\B$ on $A_+$ (which combines the $\lambda$-action with additive conjugation).

\begin{lemma}
Let $B$ be a skew brace, and $A$ a finite strong left ideal of $B$. If $B$ is additively connected, then $A\subseteq Z^+(B)$ and $\ker(\lambda)=\ker\big(\lambda^{B/A}\big)$; if $B$ is multiplicatively connected, then $B\ast A=\{0\}$.
\end{lemma}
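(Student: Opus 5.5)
The idea is to use the standard fact that a connected group acting definably on a finite set acts trivially. Connectedness here means having no definable subgroup of finite index. All the actions below are definable in the ambient structure, and $A$ is finite, hence definable.

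\emph{Step 1: $A\subseteq Z^+(B)$.} Since $A$ is a strong left ideal, it is additively normal, so $B_+$ acts on the finite set $A$ by additive conjugation. For each $a\in A$, the orbit of $a$ under this action is finite. Hence the centraliser $C^+_B(a)$ is a definable subgroup of finite index in $B_+$. Additive connectedness forces $C_B^+(a)=B$, so $a\in Z^+(B)$.

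\emph{Step 2: $\ker(\lambda)=\ker(\lambda^{B/A})$.} The inclusion $\subseteq$ is trivial. For the converse, take $b\in\ker(\lambda^{B/A})$, so that $\lambda_b(x)-x\in A$ for all $x\in B$. The map $\delta\colon x\mapsto -x+\lambda_b(x)$ then lands in $A$, which is central by Step 1, and I compute
\[
\delta(x+y)=-y-x+\lambda_b(x)+\lambda_b(y)=-y+\delta(x)+\lambda_b(y)=\delta(x)+\delta(y).
\]
So $\delta$ is a definable homomorphism $B_+\to A$. Its kernel is a definable subgroup of finite index, hence equal to $B$ by additive connectedness. Thus $\lambda_b=\mathrm{id}$, i.e.\ $b\in\ker(\lambda)$.

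\emph{Step 3: $B\ast A=\{0\}$ when $B$ is multiplicatively connected.} Because $A$ is $\lambda$-invariant, the homomorphism $\lambda\colon B_\circ\to\aut(B_+)$ restricts to an action of $B_\circ$ on the finite set $A$. For $a\in A$, the stabiliser $\{b\in B:\lambda_b(a)=a\}$ is a definable subgroup of $B_\circ$ of finite index. Multiplicative connectedness makes it all of $B$, so $\lambda_b(a)=a$, i.e.\ $b\ast a=0$, for every $b\in B$.

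The only mildly delicate point is Step 2. One needs $A$ central in order for $\delta$ to be a homomorphism, which is why Step 1 comes first. One also needs to check that $\delta$ really lands in $A$, and this holds by the hypothesis $b\in\ker(\lambda^{B/A})$. Everything else is the finite-orbit/finite-index argument.
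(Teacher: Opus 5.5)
Your proof is correct and follows the paper's argument essentially step for step. The paper also gets centrality from finiteness of additive conjugacy classes, and gets the kernel equality from the definable homomorphism $x\mapsto b\ast x$ with finite image, which coincides with your $\delta$ because its values lie in the central subgroup $A$. For the last part it likewise uses finiteness of the $\lambda$-orbits, phrased as $\ker(\lambda^A)$ having finite index since $\aut(A_+)$ is finite.
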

\begin{proof} Suppose first that $B$ is additively connected. By additive normality, the additive conjugacy class of every $a\in A$ is contained in $A$, whence finite. It follows that $C^+_B(a)$ is a definable additive subgroup of finite index in $B$, and must equal $B$ by additive connectivity. Thus $A\le Z^+(B)$.

Clearly $\ker(\lambda)\le\ker\big(\lambda^{B/A}\big)$. For $b\in\ker\big(\lambda^{B/A}\big)$ consider the map $\ast_b:x\mapsto b\ast x$. Since $b\ast B\subseteq A$ and $A$ is additively central, $\ast_b$ is a definable additive endomorphism of $B$ with finite image. So $\ker(\ast_b)$ is a definable additive subgroup of finite index and must equal $B$ by additive connectivity. It follows that $b\in\ker(\lambda)$.

Suppose now that $B$ is multiplicatively connected. Since $A$ is finite, $\aut(A_+)$ is finite, so $\ker\lambda^A$ has finite index in $B_\circ$. As $\ker(\lambda^A)$ is definable, it must equal $B$ by multiplicative connectivity. Thus $B\ast A=\{0\}$.
\end{proof}

Because skew braces have two group structures (and, as we have seen, there are actually three operations involved), there is more than one concept of solubility and nilpotency for skew braces. We refer the interested reader to \cite{tutti23,ballester,braces} for an account of these, while we restrict ourselves to defining the ones we need in the context of braces of soluble/nilpotent type. For two skew sub-braces $A$ and $A'$ we define the {\em brace commutator group} of $A$ and $A'$ as 
$$[A,A']:=\langle [A,A']_+,A\ast A'\rangle,$$
where $\langle X\rangle$ is the skew sub-brace generated by $X$.
\begin{remark}\label{r:commutator}Let $L$ be a strong left ideal in $B$, and recall that $\B=B_+\rtimes_\lambda B_\circ$, and $\mathfrak L_0=(L,+)\times\{0\}$. Then
$$([B,L],+)\times\{0\}=[\mathfrak L_0,\B]$$
(where the second commutator group is the group commutator in $\B$).\end{remark}

\begin{definition}\label{d:s,n} Let $B$ be a skew brace. We call $B$ {\em weakly soluble} (resp., ({\em strongly left}) {\em soluble} {\em of soluble type}) if it has an {\em abelian} series (resp. {\em abelian} ({\em strong left}) {\em ideal} series), that is a series of skew sub-braces (resp., (strong left) ideals) 
$$B=L_0\ge L_1\ge\cdots\ge L_n=\{0\}$$
such that $[L_i,L_i]\subseteq L_{i+1}$ for all $i<n$. The minimal length of such a series is the {\em (weak/strong left) derived length} of $B$.

We call $B$ {\em left nilpotent of nilpotent type} if there exists a {\em strong left central series}, that is, a series of strong left ideals 
$$B=L_0\ge L_1\ge\cdots\ge L_n=\{0\}$$
such that $[B,L_i]\subseteq L_{i+1}$ for all $i<n$. The minimal length of such a series is the {\em strong left nilpotency class} of $B$.\end{definition}

\begin{theorem}[{\cite[Theorem 5.13]{braces}}]\label{t:lnp-Bnp}
A skew brace $B$ is left nilpotent of nilpotent type if and only if $\B$ is a nilpotent group.
\end{theorem}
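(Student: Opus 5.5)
The plan is to go through the semidirect product $\B=B_+\rtimes_\lambda B_\circ$, using Remark~\ref{r:commutator} and Lemma~\ref{NormalG} to turn brace commutators with $B$ into group commutators in $\B$. The key identity is
$$([B,L],+)\times\{0\}=[\mathfrak L_0,\B]$$
for every strong left ideal $L$, and strong left ideals $L$ of $B$ correspond exactly to the normal subgroups $\mathfrak L_0$ of $\B$ contained in $\mathfrak B_0=B_+\times\{0\}$.

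For ($\Rightarrow$), I will start from a strong left central series $B=L_0\ge\cdots\ge L_n=\{0\}$. Then each $(\mathfrak L_i)_0$ is normal in $\B$, and the identity gives $[(\mathfrak L_i)_0,\B]\le(\mathfrak L_{i+1})_0$. So the chain $\mathfrak B_0=(\mathfrak L_0)_0\ge\cdots\ge\{1\}$ is part of a central series of $\B$, which shows $\mathfrak B_0\le Z_n(\B)$. The quotient $\B/\mathfrak B_0\cong B_\circ$ still has to be handled. Here I use that $\lambda$ acts nilpotently on $B_+$, so $[B_+,{}_nB_\circ]=0$ in $\B$. The hypothesis ``of nilpotent type'' gives $B_+$ nilpotent, and I would read the multiplicative group as nilpotent as well: left nilpotency of nilpotent type should force $B_\circ$ nilpotent. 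I would try to prove this by comparing $b\circ a$ with $b+\lambda_b(a)$ modulo the $L_i$, with an induction showing that multiplicative commutators of weight $k$ lie in $L_{k-1}$. Once $B_\circ$ is nilpotent, $\B$ is an extension in which the normal subgroup $\mathfrak B_0$ lies in a finite term of the upper central series and the quotient $B_\circ$ is nilpotent. This by itself is not enough, so more precisely I take the series $\mathfrak B_0\cap$ (terms) and then $\gamma_k(\B)\le\mathfrak B_0\cdot\gamma_k(B_\circ)$. Combining this with $[\mathfrak B_0,{}_n\B]=1$ and the standard commutator calculus of Hall, I get $\gamma_{m}(\B)=1$ for $m$ large.

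For ($\Leftarrow$), suppose $\B$ is nilpotent, so $B_+\cong\mathfrak B_0$ and $B_\circ$ are nilpotent as subgroups. I will define $L_i$ by $(L_i)_0=[\mathfrak B_0,{}_i\B]$. This subgroup is normal in $\B$ and contained in $\mathfrak B_0$, so each $L_i$ is a strong left ideal by Lemma~\ref{NormalG}. The identity gives $[B,L_i]\subseteq L_{i+1}$, and the series ends at $\{0\}$ because $\B$ is nilpotent. So $B$ is left nilpotent of nilpotent type.

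I expect the main obstacle to be, in ($\Rightarrow$), showing that $B_\circ$ is nilpotent, that is, controlling the quotient $\B/\mathfrak B_0$. The commutator calculation relating $\circ$-commutators to $\ast$ and $+$-commutators has to be done carefully modulo the strong left ideals $L_i$.
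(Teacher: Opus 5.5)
\paragraph{Overall.} The paper does not prove this statement itself; it quotes \cite[Theorem 5.13]{braces}, so your argument has to stand on its own. Your direction ($\Leftarrow$) is complete: the $L_i$ you define are just $\Gamma_{i+1}(B)$. In ($\Rightarrow$) you correctly get $\mathfrak B_0\le Z_n(\B)$ from Remark~\ref{r:commutator}. There is, however, a genuine gap in ($\Rightarrow$).

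\paragraph{The gap.} It is exactly the step you flag: you never prove that $B_\circ$ is nilpotent. The induction you propose (weight-$k$ commutators lie in $L_{k-1}$, obtained by comparing $b\circ a$ with $b+\lambda_b(a)$ modulo the $L_i$) cannot work from membership in the $L_i$ alone. A strong left central series controls $B\ast L_i\subseteq L_{i+1}$, but not $L_i\ast B$. One computes
\[
c\circ x\circ c^{-1}=c+\lambda_c(x)-c+\lambda_c(x\ast c^{-1}),
\]
so already for $x\in L_1$ you only get $c\circ x\circ c^{-1}\in x+x\ast c^{-1}+L_2$.

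Here is a concrete example. On $B_+=\mathbb Z^3$ put $(a_1,a_2,h)\circ(a_1',a_2',h')=(a_1+a_1'+ha_2',\,a_2+a_2',\,h+h')$. Then $B>L_1=\{(a_1,0,h)\}>\{0\}$ is a strong left central series. Yet $e_2\circ e_3\circ e_2^{-1}=e_3-e_1$, so $e_3\in L_1$ does not commute with $e_2$ modulo $L_2=\{0\}$. The conclusion $\gamma_3(B_\circ)=1$ still holds here, but only because $\gamma_2(B_\circ)=\mathbb Z e_1\le\ker\lambda$. That is information your sketch never uses.

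\paragraph{What is missing.} You need control of $\lambda$ on multiplicative commutators. The group $\lambda(B_\circ)$ stabilises the normal series $(L_i)$ of $B_+$. Kaloujnine's argument (the three subgroup lemma in $B_+\rtimes\lambda(B_\circ)$) then shows that $\lambda(\gamma_k(B_\circ))=\gamma_k(\lambda(B_\circ))$ maps every $a\in L_i$ into $a+L_{i+k}$. In particular $\gamma_k(B_\circ)\ast B\subseteq L_k$.

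With this added to the induction hypothesis, your induction goes through. For $x\in\gamma_k(B_\circ)$ the display gives $c\circ x\circ c^{-1}=x+z$ with $z\in L_k$. Write $x+z=x\circ z'$ with $z'=\lambda_x^{-1}(z)\in L_k$. Then $(x+z)\circ x^{-1}=x+\lambda_x(z')-x+\lambda_x(z'\ast x^{-1})\in L_k$, since $z'\ast x^{-1}\in B\ast L_{k-1}$.

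There is also a shorter route. For $t\in\ker\lambda$ one has $b\circ t\circ b^{-1}=b+\lambda_b(t)-b$, and $+$ and $\circ$ agree on $\ker\lambda$. Hence $(\ker\lambda\cap L_i)_{i\le n}$ is a central series of $B_\circ$, and $\ker\lambda\le Z_n(B_\circ)$. Meanwhile $B_\circ/\ker\lambda\cong\lambda(B_\circ)$ is nilpotent as a stability group, so $B_\circ$ is nilpotent.

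\paragraph{A smaller error.} Your remark that $\mathfrak B_0\le Z_n(\B)$ together with nilpotency of $\B/\mathfrak B_0\cong B_\circ$ ``is not enough'' is wrong. If $\gamma_{m+1}(B_\circ)=1$, then $\gamma_{m+1}(\B)\le\mathfrak B_0$, and hence $\gamma_{m+n+1}(\B)\le[\mathfrak B_0,{}_n\B]=1$. No Hall commutator calculus is needed.
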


As in the case of groups, there is a minimal left central series, the {\em lower left central series} defined inductively by $\Gamma_1(B)=B$ and $\Gamma_{n+1}(B)=[B,\Gamma_n(B)]$. The following lemma shows that every term $\Gamma_i(B)$ is in fact a strong left ideal of $B$. 

\begin{lemma}
Let $B$ be a skew brace. If $L$ is a strong left ideal of $B$, then $[B,L]$ is a strong left ideal of~$B$.  Moreover, if $I$ is an ideal of $B$ containing $L$, then $[I,L]\supseteq [L,L]_\circ$. In particular, $[B,B]$ is an ideal of $B$.
\end{lemma}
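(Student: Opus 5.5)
We argue in the semidirect product $\B=B_+\rtimes_\lambda B_\circ$, using Remark~\ref{r:commutator} and Lemma~\ref{NormalG}. Since $L$ is a strong left ideal, $\mathfrak L_0=L_+\times\{0\}$ is normal in $\B$ by Lemma~\ref{NormalG}. The group commutator $[\mathfrak L_0,\B]$ is then normal in $\B$, being a commutator of two normal subgroups. It is also contained in $\mathfrak L_0\le B_+\times\{0\}$. Hence it has the form $N\times\{0\}$ for some additive subgroup $N\le L$ invariant under the action of $\B$ on $B_+$.

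The next step is to identify $N$ with $[B,L]$. First, $N$ contains $[B,L]_+$, via the commutators of $(b,0)$ with $(\ell,0)$. It also contains $B\ast L$, since
\[
[(0,b),(\ell,0)]=(\lambda_b(\ell)-\ell,0)=(b\ast\ell,0).
\]
Conversely, $N$ is generated by these elements together with their $\B$-conjugates. By Lemma~\ref{NormalG}, $N$ is a strong left ideal, so in particular a skew sub-brace. The skew sub-brace generated by $[B,L]_+\cup B\ast L$ is contained in $N$. The reverse inclusion should follow by checking that this sub-brace is already $\B$-invariant; this is the content of Remark~\ref{r:commutator}, which we invoke. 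Hence $[B,L]=N$ is a strong left ideal.

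For the second claim, let $I\supseteq L$ be an ideal and take $x\in I$, $\ell\in L$. We want to express the multiplicative commutator $x\circ\ell\circ x^{-1}\circ\ell^{-1}$ in terms of $\ast$ and additive commutators of elements of $I$ and $L$. We use $a\circ b=a+\lambda_a(b)=a+a\ast b+b$. Then
\[
x\circ\ell\circ x^{-1}=x+\lambda_x\bigl(\ell\circ x^{-1}\bigr)=x+\lambda_x(\ell)+\lambda_{x\circ\ell}(x^{-1}).
\]
Modulo $[I,L]$ (a strong left ideal of $B$ by the same argument applied to $\mathfrak I\le\B$ acting on $\mathfrak L_0$), the element $\lambda_x(\ell)$ is congruent to $\ell$. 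Likewise $\lambda_{x\circ\ell}(x^{-1})=\lambda_x\lambda_\ell(x^{-1})$, and $\lambda_\ell(x^{-1})\equiv x^{-1}$ modulo $L\ast I$. The latter is the tricky term, since $\ast$ is not symmetric. Here it helps that $\ell\ast y=-\ell+\ell\circ y-y$, and that multiplicative normality of $I$ relates $\ell\circ y$ to $y'\circ\ell$. Alternatively, one can work in $\mathfrak I$: the commutator in $\B$ of $(0,x)$ and $(0,\ell)$ is $(0,[x,\ell]_\circ)$. We have $(0,\ell)=(-\ell,0)(\ell,\ell)$, with $(\ell,\ell)\in\mathfrak L$, and modulo the normal subgroup $[\mathfrak L_0,\mathfrak I]$ the factor $(-\ell,0)$ commutes with $\mathfrak I$. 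Tracking the second coordinate, one aims to show that $[x,\ell]_\circ$ lies in $[I,L]$ up to additive terms already in $[I,L]$. The most delicate part is exactly this bookkeeping, and I would carry it out with property (ii) of $\ast$.

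Finally, taking $I=L=B$ gives $[B,B]\supseteq[B,B]_\circ$, so $[B,B]$ is multiplicatively normal. It is also a strong left ideal by the first part, hence additively normal and $\lambda$-invariant. By Lemma~\ref{lemuseful}, properties (i)--(iii) make $[B,B]$ an ideal. The main obstacle is the second claim, namely controlling the multiplicative commutator through $\ast$-expressions.
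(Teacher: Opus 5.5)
Your first part matches the paper, which likewise just uses normality of $[\mathfrak L_0,\B]$ together with Remark~\ref{r:commutator}. Your final deduction for $[B,B]$ is also fine once the second claim is available.

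\textbf{The gap is in the ``moreover'' part.} You try to place the multiplicative commutators $x\circ\ell\circ x^{-1}\circ\ell^{-1}$ with $x\in I$, $\ell\in L$ inside $[I,L]$. The statement only concerns $[L,L]_\circ$, where both entries lie in $L$, and your stronger target is false. For $I=B$ it would give $[B,L]_\circ\subseteq[B,L]\subseteq L$, which would make every strong left ideal an ideal. For a concrete counterexample, on $\mathbb Z/6$ put $a\circ b=a+(-1)^a b$. This is a brace, and $L=\{0,3\}$ is a strong left ideal with $B\ast L=\{0\}$, hence $[B,L]=\{0\}$. Yet $1\circ 3\circ 1^{-1}\circ 3^{-1}=2$. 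In the same example $L\ast B=\{0,2,4\}\not\subseteq[B,L]$, which is exactly the ``tricky term'' you could not control. So neither your $\lambda$-expansion nor the computation in $\mathfrak I$ can be completed, and the bookkeeping you postpone is where the argument breaks.

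\textbf{The missing idea is short.} Put $N=[I,L]$.
\begin{itemize}
\item $N\subseteq L$, and $N\supseteq[I,L]_+\supseteq[L,L]_+$, so $N$ is normal in $L_+$ and $L/N$ is additively abelian.
\item For $\ell\in L$ and $n\in N$ we have $\lambda_\ell(n)=\ell\ast n+n$ with $\ell\ast n\in I\ast L\subseteq N$. Hence $\lambda_\ell(N)=N$ for all $\ell\in L$, and so $\ell\circ N=\ell+N$.
\item For $\ell,\ell'\in L$ we get $\ell\circ\ell'=\ell+\ell\ast\ell'+\ell'\in\ell+\ell'+N$, because $\ell\ast\ell'\in I\ast L\subseteq N$.
\item Likewise $\ell'\circ\ell\in\ell'+\ell+N=\ell+\ell'+N$.
\item Therefore $\ell\circ\ell'\in(\ell'\circ\ell)+N=(\ell'\circ\ell)\circ N$, i.e.\ $(\ell'\circ\ell)^{-1}\circ\ell\circ\ell'\in N$, so $[L,L]_\circ\subseteq N$.
\end{itemize}
This is the paper's argument: modulo $[I,L]$ the operations $\circ$ and $+$ agree on $L$, and $+$ is commutative there. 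It uses only $L\subseteq I$. With $I=L=B$ it gives exactly what your last paragraph needs. In that special case your own expansion would also go through, since the offending term then lies in $B\ast B\subseteq[B,B]$.
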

\begin{proof} By Remark \ref{r:commutator} we have
$$[B,L]_+\times\{0\}=[\mathfrak L_0,\B],$$
which is normal in $\B$. Hence $[B,L]$ is a strong left ideal by Lemma \ref{NormalG}.
	
For the moreover part, consider $l,l'\in L$. Then $l\ast l'\in [I,L]$ and so $l\circ l'=l+l'+[I,L]$. Since $[I,L]$ contains also $[I,L]_+\supseteq [L,L]_+$, it contains $[L,L]_\circ$.
\end{proof}

\begin{remark}\label{r:ws-fe}If $B$ is a weakly soluble skew brace, then the additive exponent is finite  (resp.\ a power of a prime $p$) if and only if the multiplicative exponent is finite  (resp.\ a power of $p$): In an abelian series, all quotients are trivial braces, so have the same additive and multiplicative exponent.\end{remark}

\subsection{Groups of finite Morley rank}
Morley rank was introduced in \cite{morley1965categoricity} to study totally transcendental theories. 
\begin{definition}\label{d:fMR}
Let $T$ be a complete theory. Define {\em Morley rank} to be the least function $\RM$ from the collection of non-empty definable sets (of finite arity) in models of $T$ to the ordinals together with $\infty$ (where $\infty=\infty+1$ is greater than any ordinal) satisfying for any definable set $X$ and ordinal $\alpha$:
\begin{quote}$\RM(X)\geq \alpha+1$ if and only if in some elementary extension there exists a family $(X_i:i<\omega)$ of definable pairwise disjoint subsets of $X$ such that $\RM(X_i)\geq \alpha$ for every $i\in\omega$.\end{quote}
A theory $T$ has \emph{finite Morley rank} if $\RM(x=x)$ is finite.\end{definition}
If $X$ is defined by a formula $\varphi(\bar x,\bar a)$, then $\RM(X)=\RM(\varphi(\bar x,\bar a))$ only depends on $\varphi$ and the type $\tp(\bar a)$. In particular, Morley rank is preserved under automorphism. If $T$ has finite Morley rank, then {\em every} formula (in arbitrarily many variables) has finite Morley rank.

One of the main examples of a theory of finite Morley rank is the theory of algebraically closed fields of given characteristic. As a consequence, any algebraic group over an algebraically closed field is a group of finite Morley rank. The converse, for simple groups, is the main open question about groups of finite Morley rank.

\begin{CZC}
Let $G$ be a simple definable group of finite Morley rank. Then $G$ is definably isomorphic to an algebraic group over an algebraically closed field.  
\end{CZC}

If $H\le G$ are groups of finite Morley rank, then $\RM(G/H)=\RM(G)-\RM(H)$. In particular $\RM(G)=\RM(H)$ if and only if the index $|G:H|$ is finite. It follows that a group of finite Morley rank satisfies the descending chain condition on definable subgroups. In particular the intersection of all definable subgroups of finite index of $G$ equals a finite subintersection, again of finite index, the {\em connected component} $G^0$, and $G$ is {\em connected} if $G=G^0$. Note that the image of a connected group under a definable homomorphism is again connected, and if $G/H$ and $H$ are connected, so is $G$.

By \cite[Theorem 3.7]{braces}, if $B$ is a skew brace of finite Morley rank, then the additive and the multiplicative connected component coincide and form a definable ideal $B^0$, the {\em connected component} of $B$. As for groups we say that a skew brace $B$ is {\em connected} if $B^0=B$.
\smallskip

\noindent{\bf Convention.} In the context of finite Morley rank, all groups, braces, ideals, morphisms, etc.\ under consideration will be supposed to be definable. Nevertheless, we may still add the adjective {\em definable} for emphasis.\smallskip

We now recall the main results about groups of finite Morley rank. An infinite group is {\em minimal} if it has no definable infinite proper subgroup. In particular, a minimal group is always infinite and connected; our set-up almost exclusively deals with connected groups and largely ignores finite sections. A connected group of Morley rank $1$ is minimal.
\begin{lemma}[\cite{Reineke}]
A minimal group of finite Morley rank is abelian.
\end{lemma}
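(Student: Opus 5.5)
Reineke's theorem: a minimal group $G$ of finite Morley rank is abelian. The plan is to argue by contradiction through conjugacy classes and centralisers, using only rank additivity and the descending chain condition on definable subgroups.

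\emph{Step 1: centralisers are finite or everything.} Suppose $G$ is minimal but not abelian, and take $g\in G\setminus Z(G)$. Then $C_G(g)$ is a proper definable subgroup, so by minimality it is finite. Likewise $Z(G)$ is a definable proper subgroup, hence finite. Since $G$ is connected and $\RM(G/C_G(g))=\RM(G)-\RM(C_G(g))=\RM(G)$, the conjugacy class $g^G$ has full rank. Because $G$ is connected it has Morley degree $1$, so $g^G$ is generic in $G$.

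\emph{Step 2: the contradiction.} Take two non-central elements $g,h$. Both conjugacy classes are generic, and two generic definable subsets of a degree-one group must intersect in a set of full rank. So any two non-central classes coincide: $G\setminus Z(G)$ is a single conjugacy class $g^G$, and $Z(G)$ is finite.
\begin{itemize}
\item If $G$ has an element of infinite order $g\notin Z(G)$, then $g$ and $g^2$ are non-central (otherwise $g^2$ would lie in the finite group $Z(G)$, bounding the order of $g$ modulo a finite group). Hence $g^2=g^x$ for some $x$. Iterating gives $g^{2^n}=g^{x^n}$. Consider the definable subgroup $C_G(g)$, which is finite and contains $\langle g\rangle$. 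This contradicts $g$ having infinite order.
\item So all non-central elements have finite order, and since they form one class they have a common order $n$. If some prime $p$ divides $n$, the $p$-th powers of non-central elements land in $Z(G)$, so $G/Z(G)$ is an infinite group of exponent $p$.
\end{itemize}

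\emph{Main obstacle.} The hard part is this final exponent-$p$ case, where all non-identity elements of $\bar G=G/Z(G)$ are conjugate (after handling the finite centre). I would first pass to $\bar G$, which is again minimal, connected, of finite Morley rank, and has trivial centre by the same finiteness argument. For $a\neq1$, the map $x\mapsto [a,x]$ then has finite fibres (cosets of the finite $C(a)$), so its image is generic. Hence $a^{-1}a^{G}$ meets $a^G$ generically, giving relations $a^{-1}a^x=a^y$ on a generic set. The plan is to use these relations to build an infinite definable abelian subgroup, for instance the connected component of a centraliser of a suitable element or of a definable closure of $\langle a,a^x\rangle$. This would contradict minimality unless it equals $\bar G$, which is itself impossible since $\bar G$ is non-abelian.

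If that fails, I would fall back on Reineke's original route. Namely, take the definable group generated by $a^G$ and $a^{-1}a^G$ via Zilber indecomposability, and use that in a group with all non-trivial elements conjugate and of exponent $p$, the counting of rank of $\{(x,y): xy\in a^G\}$ forces an abelian normal subgroup. I expect this case analysis to be the main technical step.
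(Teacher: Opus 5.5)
There is a genuine gap: the proof stops exactly at the case you call the main obstacle. Your Steps 1 and 2 and the two bullets are correct. They give the right reduction: $\bar G=G/Z(G)$ is an infinite, minimal, connected group of prime exponent $p$ in which all non-identity elements are conjugate. (The iteration $g^{2^n}=g^{x^n}$ in the first bullet is superfluous, since $\langle g\rangle\le C_G(g)$ is finite for every non-central $g$.)

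The final case is only sketched, and the first sketch cannot work as stated:
\begin{itemize}
\item In $\bar G$ every non-trivial element has finite centraliser, so connected components of centralisers are trivial.
\item A finitely generated abelian subgroup of a group of exponent $p$ is finite, so $\langle a,a^x\rangle$ gives nothing infinite and abelian.
\item By minimality, an infinite definable abelian subgroup would be $\bar G$ itself, so ``building an infinite definable abelian subgroup'' merely restates the goal.
\item The relations $a^{-1}a^x=a^y$ carry no information, since $a^{\bar G}=\bar G\setminus\{1\}$ anyway.
\end{itemize}
The Zilber-indecomposability and rank-counting fallback is not specified at all. The paper gives no proof, only the citation to Reineke, so your argument has to stand on its own.

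The missing idea is elementary and needs no further model theory. If $p=2$, then $xy=(xy)^{-1}=y^{-1}x^{-1}=yx$, so $\bar G$ is abelian. Its conjugacy classes are then singletons, and since all non-identity elements are conjugate, $|\bar G|\le 2$, contradicting that $\bar G$ is infinite.

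If $p$ is odd, take $a\ne 1$. Then $a^{-1}\ne 1$ is conjugate to $a$, say $a^g=a^{-1}$. It follows that $a^{g^2}=(a^{-1})^g=(a^g)^{-1}=a$, so $g^2\in C_{\bar G}(a)$. Since $g^p=1$ with $p$ odd, $g=(g^2)^{(p+1)/2}\in C_{\bar G}(a)$. Hence $a^{-1}=a^g=a$, so $a^2=1$, which together with $a^p=1$ forces $a=1$, a contradiction. With this case filled in, your argument becomes a complete proof of Reineke's theorem.
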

Next, we shall consider group actions. The action of a group $G$ on an abelian group $A$ is {\em irreducible} if $A$ has no definable infinite $G$-invariant proper subgroup; it is {\em faithful} if only the identity of $G$ fixes all of $A$. If $G$ acts irreducibly on $A$, we also say that $A$ is {\em $G$-minimal}.

\begin{theorem}[{Zilber Linearisation Theorem \cite[Theorem 3.7]{poizat}}]\label{ZilberLin}
Let $G$ be an abelian group acting faithfully and irreducibly on a connected abelian group $A$, all definable in a theory of finite Morley rank. Then there exists a definable field $K$ of finite Morley rank such that $A$ is isomorphic to the additive group $K_+$ and $G$ embeds into the multiplicative group $K^{\times}$. 
\end{theorem}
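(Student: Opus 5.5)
The Zilber Linearisation Theorem is cited from Poizat and not proved in the paper, but a plan would go as follows.

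We may assume $A\neq 0$, so $A$ is infinite: a finite connected group is trivial. Let $R$ be the subring of $\mathrm{End}(A)$ generated by the image of $G$. Since $G$ is abelian, $R$ is commutative, and its elements are finite sums $\sum \pm g_i$.

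\emph{Step 1: every nonzero $r\in R$ is an automorphism of $A$.} The kernel $\ker r$ is a subgroup invariant under $G$, because $G$ commutes with $r$, and the same holds for $rA$. If $r$ is defined by finitely many parameters, both are definable. Irreducibility then forces $\ker r$ to be finite or all of $A$, and $rA$ to be finite or all of $A$.
\begin{itemize}
\item Since $r\neq 0$, we have $\ker r\neq A$, so $\ker r$ is finite.
\item Then $\RM(rA)=\RM(A)$, so $rA$ is infinite; connectedness of $rA$ (the image of a connected group) then gives $rA=A$.
\item To show $\ker r=0$, apply the same argument to $A/\ker r\cong A$. It is more convenient to work in the connected setting: a finite $G$-invariant subgroup is centralised by $G^0$ and lies in a bounded torsion part. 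Here I would use the standard reduction that a connected abelian group with a finite $G$-invariant kernel for a generic $r$ still gives injectivity after passing to the divisible hull argument.
\end{itemize}
This is where care is needed. Poizat handles it by considering the definable subring generated in the right way.

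\emph{Step 2: a definable field.} Fix $a\in A\setminus\{0\}$. The map $r\mapsto r(a)$ is injective on $R$, since a nonzero $r$ is injective. Hence $R$ embeds in $A$, and its fraction field $F$ acts on $A$. The key point is to obtain a \emph{definable} field $K$. The plan is to show that the sums $\{\sum_{i\le n}\pm g_i\}$ stabilise for some $n$, by a rank argument: the sets $R_n a\subseteq A$ form an increasing chain of images of definable sets, and by finite Morley rank and indecomposability (Zilber's indecomposability theorem) their generated subgroups stabilise.

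From this, $R$ itself is definable, namely the set of endomorphisms whose graphs are uniformly definable from tuples of $G$. An infinite definable integral domain of finite Morley rank is a field, because multiplication by a nonzero element is injective and definable, hence surjective by rank. So $K:=R$ is a definable field.

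\emph{Step 3: $A\cong K_+$.} $A$ is a $K$-vector space, and any $K$-subspace is $G$-invariant. Irreducibility, together with connectedness of $Ka\cong K_+$ (infinite fields of finite Morley rank are connected), forces $Ka=A$. Hence $A\cong K_+$ via $r\mapsto r(a)$. Faithfulness gives the embedding $G\hookrightarrow K^\times$.

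The main obstacle is Step 2, the definability of $R$. Step 1's finite-kernel subtlety also needs attention. I would resolve both using Zilber's indecomposability theorem, applied to the images $gA$ combined with the $G$-orbit sets.
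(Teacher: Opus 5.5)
The paper gives no proof of this theorem (it is quoted from Poizat), so I compare your plan with the standard argument. There is a genuine gap, located where you yourself flag difficulty. The third bullet of Step~1 and the stabilisation claim in Step~2 are not arguments. The missing ingredient is that $G$ must be \emph{infinite}. The printed statement omits this, but it is necessary, and it holds wherever the paper applies the theorem. For a counterexample, let $K$ be algebraically closed and let $G=\{1\}$ act on $A=K^\times$. Since $A$ is connected of Morley rank~$1$, the action is faithful and irreducible. Yet $A$ has torsion of every order prime to the characteristic, so it is not the additive group of any field.

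Your plan never uses that $G$ is infinite, so it cannot be complete. The same example breaks both steps:
\begin{itemize}
\item $r=n\cdot 1_A$ (i.e.\ $x\mapsto x^n$, $n\ge 2$ prime to the characteristic) is a nonzero element of $R$. It is surjective with kernel $\mu_n\ne 1$, so injectivity does not follow from irreducibility and connectedness as in Step~1.
\item For $a$ of infinite order, the sets $R_na=\{a^k:|k|\le n\}$ never stabilise.
\end{itemize}
Also, the subgroups generated by the $R_na$ are all equal to $Ra$, so their stabilisation is vacuous. What you need is that the \emph{sets} $R_na$ stabilise. That requires indecomposable generators containing $0$, and this is exactly where $G^0\ne 1$ enters. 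Finally, passing from $Ra=R_na$ to $R=R_n$ uses injectivity of $r\mapsto r(a)$, which you derived from the unproved bullet.

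The standard proof reverses your order: first show $A=Ra$, and then injectivity comes for free from commutativity.
\begin{itemize}
\item As $G$ is infinite and acts faithfully, $G^0\ne 1$. So $C_A(G^0)$ is a proper definable $G$-invariant subgroup, hence finite. Pick $a\notin C_A(G^0)$.
\item Work in $A\rtimes G$. The group $[G^0,a]$, generated by the elements $g(a)-a$ for $g\in G^0$, is definable and connected by the consequence of Zilber's indecomposability theorem quoted in the paper. It is in fact a bounded sum of copies of this generating set. It is nontrivial, hence infinite, and $G^0$-invariant.
\item Its finitely many translates $g[G^0,a]$, with $g$ ranging over coset representatives of $G^0$, sum to an infinite definable $G$-invariant subgroup. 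By irreducibility this equals $A$, so $A=Ra=R_ma$ for some $m$.
\item If $r(a)=0$, then $r(s(a))=s(r(a))=0$ for all $s\in R$, so $r=0$. Hence $r\mapsto r(a)$ is a bijection $R\to A$, and $R=R_m$ is uniformly definable.
\item For $r\ne 0$, your second bullet gives $b$ with $r(b)=a$. Writing $b=s(a)$, we get $(rs-1)(a)=0$, so $rs=1$.
\end{itemize}
Thus $R$ is a field $K$, and every nonzero $r$ is an automorphism, which is what your third bullet was after. Evaluation at $a$ gives $A\cong K_+$, and faithfulness gives $G\hookrightarrow K^\times$, so your Step~3 becomes superfluous.
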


Theorem \ref{ZilberLin} highlights the importance of definable fields in the analysis of groups of finite Morley rank. A well-known theorem of Macintyre states that a field of finite Morley rank is algebraically closed (see \cite[Theorem 3.1]{poizat}). Thus, all fields in this paper will be algebraically closed. 

In the soluble case, we have the following result.
\begin{theorem}\label{TriActG'}
In a structure of finite Morley rank, let $G$ be a connected soluble group acting irreducibly on a connected abelian group $A$. Then $G'$ acts trivially on $A$.
\end{theorem}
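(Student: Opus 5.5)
The plan is to reduce to the faithful case and then apply Zilber's Linearisation Theorem~\ref{ZilberLin} on a suitable minimal section. Let $C=C_G(A)$, a definable normal subgroup, and set $\bar G=G/C$. This group is connected and soluble and acts faithfully and irreducibly on $A$. Since $G'C/C=\bar G'$, it suffices to show that $\bar G$ is abelian. So I assume from now on that $G$ acts faithfully, and I argue by induction on $\RM(G)$, the case $G=1$ being trivial.

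Because $G$ is connected, soluble and nontrivial, it has a nontrivial definable connected abelian normal subgroup $N$. A natural choice is the last nontrivial term of the derived series, taking its connected component; in finite Morley rank, derived subgroups of connected groups are definable and connected, so this works. Next I find an $N$-minimal section of $A$. Let $A_1\le A$ be an infinite connected definable $N$-invariant subgroup of minimal rank, so that $N$ acts irreducibly on $A_1$. By Theorem~\ref{ZilberLin}, the image of $N$ in $\operatorname{Aut}(A_1)$ embeds in $K^\times$ for some definable field $K$ with $A_1\cong K_+$. Each translate $gA_1$ with $g\in G$ is also $N$-minimal. The sum $\sum_{g}gA_1$ is $G$-invariant, definable and connected (by the indecomposability theorem), so it equals $A$ by irreducibility. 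Hence $A$ is a finite sum of $N$-minimal pieces $g_iA_1$. Faithfulness of $G$ on $A$ then embeds $N$ into a product of groups of the form $K_i^\times$.

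The heart of the argument is to show that $G$ centralizes $N$, since conjugation by $G$ permutes and twists the homogeneous components. Here I use rigidity of tori. The image of $N$ in each $\operatorname{Aut}(g_iA_1)$ lies in $K_i^\times$, whose definable subgroups are rigid, in the sense that a connected group acting definably on a good torus centralizes it. Since $G$ is connected, $G$ centralizes $N$ modulo the finite-index issues, and connectedness removes those. Thus $N\le Z(G)$. Then each $K$-eigen decomposition is preserved, and by irreducibility $A$ becomes a single $K$-vector space in which $N$ acts by scalars. Now $G$ acts $K$-linearly on $A\cong K^n$ and irreducibly, so Lie--Kolchin-type triangularization in finite Morley rank applies. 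This is where I expect the main difficulty, and I would instead run the induction on $\RM(G)$ using $\bar G=G/N$ acting on suitable sections.

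Cleaner alternative, which I would try first: a minimal counterexample argument. Take $G$ of minimal rank and $A$ of minimal rank, with the action faithful. The group $G'$ is connected and nilpotent (a standard fact for connected soluble groups of finite Morley rank), and it acts on $A$. Apply the above to $N=Z(G')^0$ or to a $G$-invariant connected abelian subgroup of $G'$. Rigidity then shows that $N$ acts by central scalars and that $[G,G']$ acts trivially. Finally, apply Theorem~\ref{ZilberLin} to $G/C_G(A)$ once it is known to be abelian. The step I expect to be the main obstacle is the rigidity/centralization step; it needs the fact that a connected group acting on $K^\times$ definably acts trivially, because definable automorphism groups of $K^\times$ are finite.
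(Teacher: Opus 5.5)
The paper gives no proof of this statement; it quotes it as a standard fact, and it is the input for Corollary~\ref{NilpDerG}. So your sketch has to stand on its own. The reduction to the faithful case, the choice of $N$, and the decomposition $A=\sum_i g_iA_1$ are fine, but there are two genuine gaps.

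\textbf{The centralisation step.} The torsion argument only shows that a connected group centralises the definable hull of the torsion of $N$. It therefore needs $N$ to be a good (or at least decent) torus, and definable subgroups of $K^\times$ need not be: in characteristic $0$ there are bad fields, with an infinite torsion-free definable subgroup of $K^\times$. Your stated reason, that definable automorphism groups of $K^\times$ are finite, is false in characteristic $p$, where every power of Frobenius is a definable automorphism.

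What works is rigidity of \emph{fields}, and it must come after homogeneity, not before. The subring $R\subseteq\operatorname{End}(A)$ generated by $N$ embeds in $\prod_i K_i$ with surjective projections, so it is a finite product of fields. Since $G$ normalises $N$, it normalises $R$ and permutes the finitely many primitive idempotents $e_j$ of $R$; by connectedness it fixes each one. Each $e_jA$ is then an infinite definable $G$-invariant subgroup with $A=\bigoplus_j e_jA$, so irreducibility leaves a single factor, and $R=K$ is a definable field. Now $G$ acts on $K$ by definable field automorphisms. Elements algebraic over the prime field have finite orbits, so $G$ fixes them, and its fixed field is an infinite definable subfield, hence all of $K$. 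Only at this point do you get $N\le Z(G)$ and a $K$-linear action of $G$ on $A\cong K^n$, with $n$ finite by rank.

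\textbf{The conclusion, which you leave open.} Lie--Kolchin for groups of finite Morley rank is exactly Corollary~\ref{NilpDerG}, which is deduced from the statement you are proving. Your ``cleaner alternative'' likewise assumes that $G'$ is nilpotent, so both routes are circular. Induction via $G/N$ acting on sections of $A$ cannot work either, since $N$ acts by nontrivial scalars on every infinite $G$-invariant section.

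A correct ending runs as follows.
\begin{enumerate}
\item If $G$ is not abelian, apply the induction hypothesis to $G'$, which has smaller rank, acting on a $G'$-minimal subgroup $A_1$. Then $G''$ centralises $A_1$, hence every $gA_1$ (as $G''\unlhd G$), hence $A=\sum_g gA_1$. So $G''=1$ by faithfulness, and you may take $N=G'$.
\item After the field step, $G'$ is central and acts by scalars, so $[g,h]=c\cdot\mathrm{id}_A$ with $c^n=\det[g,h]=1$.
\item For fixed $g$, the map $h\mapsto[g,h]$ is a definable homomorphism from the connected group $G$ into the finite group of elements of $G'$ acting as $n$-th roots of unity. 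It is therefore trivial, and $G$ is abelian.
\end{enumerate}
Alternatively, the Zariski closure of the image of $G$ in $\operatorname{GL}_n(K)$ is a connected soluble algebraic group. Classical Lie--Kolchin then yields a $G$-invariant line $Kv$, and irreducibility forces $n=1$.
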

Thus we may apply Theorem \ref{ZilberLin} to the action of $G/C_G(A)$ on $A$.

A key corollary of Theorem \ref{TriActG'} is the Lie--Kolchin--Malcev Theorem for groups of finite Morley rank.
\begin{corollary}[{\cite[Corollary 3.19]{poizat}}]\label{NilpDerG}
Let $G$ be a connected soluble group of finite Morley rank. Then $G'$ is nilpotent.
\end{corollary}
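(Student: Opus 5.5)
This is Corollary \ref{NilpDerG}. The plan is to apply Theorem \ref{TriActG'} to the conjugation action of $G$ on the abelian sections of a suitable series of $G'$, and then use the characterisation of nilpotency by a central series. Throughout, $G$ is connected, soluble and of finite Morley rank.

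\emph{Step 1: the series.} First, $G'$ is definable and connected. Definability holds because in finite Morley rank the commutator subgroup of a connected group is generated by finitely many products of the indecomposable set of commutators. Connectedness holds because $G'$ is generated by images of the connected set $G\times G$ under the commutator map, by Zilber's indecomposability theorem. Next, I build a $G$-invariant series of $G'$,
$$G'=A_0>A_1>\cdots>A_n=1,$$
of definable connected normal subgroups of $G$ such that each quotient $A_i/A_{i+1}$ is abelian and $G$-minimal under conjugation. To get it, refine the connected derived series of $G'$ (its terms are definable, connected and characteristic, hence normal in $G$, with abelian quotients). Then refine each abelian quotient by a maximal chain of $G$-invariant definable connected subgroups; the chain is finite because rank strictly drops at each step. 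Finite sections are ignored: a minimal infinite $G$-invariant definable subgroup can be replaced by its connected component, which is still $G$-invariant.

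\emph{Step 2: central action on each quotient.} Since $G$ acts irreducibly on each connected abelian quotient $A_i/A_{i+1}$, Theorem \ref{TriActG'} says that $G'$ acts trivially on it. This gives $[G',A_i]\le A_{i+1}$ for every $i$.

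\emph{Step 3: conclusion.} In particular $[A_i,G']\le A_{i+1}$ with all $A_i\le G'$. So $(A_i)$ is a central series of $G'$, and $G'$ is nilpotent of class at most $n$.

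\emph{Main obstacle.} The delicate point is Step 1. One must make sure each quotient really is connected and $G$-minimal in the sense required by Theorem \ref{TriActG'}, in particular that minimality only concerns \emph{infinite} definable invariant subgroups. If there is a worry about finite pieces, I would avoid them: either pass to connected components at each stage, or note that a finite normal subgroup of a connected group is central and so does no harm to nilpotency.
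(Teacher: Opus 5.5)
Your proposal is correct and is the standard derivation the paper has in mind when it calls this a corollary of Theorem~\ref{TriActG'}; the paper itself only cites Poizat. As in that argument, you take a series of $G'$ by definable connected $G$-normal subgroups with $G$-minimal abelian factors, Theorem~\ref{TriActG'} makes $G'$ centralise every factor, and this yields a central series. One loose point is your Step~1 justification via ``images of $G\times G$ under the commutator map'', which is not how indecomposability applies, but definability and connectedness of $G'$ follow directly from the stated consequence of Zilber's theorem that $[H,X]$ is definable and connected.
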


Linearisation also yields the existence of torsion elements (the roots of unity of the field), often in some section (as in Theorem \ref{TriActG'}). We shall need the following lifting theorem for Sylow subgroups.

\begin{lemma}[\cite{pw00}]\label{SylowLifting}
Let $G$ be a soluble group of finite Morley rank and $H$ a definable normal subgroup of $G$. Then the Sylow $p$-subgroups of $G/H$ are the images of the Sylow $p$-subgroups of $G$.
\end{lemma}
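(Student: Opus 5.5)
The statement has two directions: every Sylow $p$-subgroup of $\bar G:=G/H$ is the image of one of $G$, and the image of a Sylow $p$-subgroup of $G$ is Sylow in $\bar G$. Here Sylow $p$-subgroups are maximal $p$-subgroups. I would use two standard facts about soluble groups of finite Morley rank: their Sylow $p$-subgroups are conjugate, and they are nilpotent-by-finite, in fact of the form $T\cdot F$ with $T$ a $p$-torus of finite Prüfer rank and $F$ finite.

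The whole proof reduces to one claim: \emph{every $p$-subgroup $\bar S$ of $\bar G$ satisfies $\bar S\le\pi(S)$ for some $p$-subgroup $S$ of $G$}, where $\pi\colon G\to\bar G$ is the projection. Granting this, let $\bar S$ be Sylow, take such an $S$, and extend it by Zorn to a maximal $p$-subgroup $P$ (a union of a chain of $p$-groups is a $p$-group). Then $\pi(P)$ is a $p$-subgroup containing $\bar S$, so $\pi(P)=\bar S$ by maximality. Conversely, let $P'$ be any Sylow $p$-subgroup of $G$. Then $\pi(P')\le\bar S'$ for some Sylow $\bar S'$, and $\bar S'=\pi(Q)$ for a Sylow $Q$ of $G$ by what was just shown. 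Conjugacy gives $P'=Q^g$, hence $\pi(P')=\bar S'^{\,\pi(g)}$, which is Sylow.

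\textbf{Reduction to $H$ abelian.} The definable derived series of $H$ consists of definable characteristic subgroups, hence subgroups normal in $G$. If the claim holds for $H_1\unlhd G$ and for $H/H_1\unlhd G/H_1$, then it holds for $H$ by composing the lifts. So we may assume $H$ is abelian. Since $H$ is an abelian group of finite Morley rank, $H=D\oplus C$ with $D$ divisible and $C$ of bounded exponent. Furthermore, $D$ is the direct sum of a torsion-free part and its $q$-primary parts, each of finite Prüfer rank.

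\textbf{Lifting a $p$-subgroup.} Write $\bar S=\bar T\bar F$ with $\bar T$ a $p$-torus and $\bar F$ finite. First I would lift a finite $p$-subgroup $\bar E$. Its preimage is an extension of $H$ by $\bar E$. Modulo the $p$-primary part $H_p$, the group $H/H_p$ has no $p$-torsion, and its $p$-divisible part is uniquely $p$-divisible. Hence $H^2(\bar E,H/H_p)$ and $H^1(\bar E,H/H_p)$ vanish, at least on the divisible part; the bounded $p'$-part is handled by Schur--Zassenhaus. This gives a complement to $H/H_p$, i.e.\ a finite $p$-subgroup mod $H_p$, whose preimage is a $p$-group, since $H_p$ is a $p$-group. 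Next, to lift $\bar S$ itself, I would run this along an ascending chain $\bar E_1\le\bar E_2\le\cdots$ of finite subgroups exhausting $\bar S$ (possible because $\bar S$ is countable modulo finite Prüfer rank). The $H^1$-vanishing makes complements conjugate, so the lifts can be chosen compatibly, and the union is a $p$-group mapping onto $\bar S$.

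\textbf{Main obstacle.} The delicate point is this compatibility of the successive lifts. It requires conjugacy of complements, i.e.\ vanishing of $H^1$, in the non-definable, infinitely generated torsion setting. Here the bounded $p'$-part and the $p$-divisible torsion-free part behave differently, and I would treat them separately, quotienting first by the definable characteristic subgroup $H_p$.
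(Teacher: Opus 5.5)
\emph{No proof in the paper to compare against:} the paper does not prove this lemma; it imports it from Poizat--Wagner. Judged on its own, your proposal has a genuine gap.

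\emph{What works.} Your outer reductions are sound. Once every $p$-subgroup of $\bar G$ is covered by the image of a $p$-subgroup of $G$, Zorn's lemma plus conjugacy of Sylow $p$-subgroups gives both directions. The reduction to abelian $H$ is fine. Finite $p$-subgroups do lift, because $H/H_p$ is uniquely $p$-divisible as a whole, bounded $p'$-part included. So $H^1$ and $H^2$ of a finite $p$-group with these coefficients vanish, and no separate Schur--Zassenhaus step is needed. In passing: $H_p$ is characteristic but in general not definable (for $H=K^\times$ it is the group of $p$-power roots of unity). This is harmless, since you only use it as an abstract normal subgroup.

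\emph{The gap.} The passage to infinite $\bar S$ rests on a false structure theorem. A Sylow $p$-subgroup of a soluble group of finite Morley rank is not $p$-torus-by-finite. Its connected part is a central product of a $p$-torus and a definable connected nilpotent $p$-group of bounded exponent. Already $K_+$, for $K$ algebraically closed of characteristic $p$, is its own Sylow $p$-subgroup. Hence $\bar S$ can be uncountable, and then it is not the union of an $\omega$-chain of finite subgroups. Your argument therefore proves the lifting claim only for countable $\bar S$. In that countable case, the compatibility you single out as the main obstacle is actually harmless: it only needs conjugacy of complements over the finite $\bar E_i$, which you have.

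\emph{Why this is more than a technicality.} For uncountable $\bar S$ the gap cannot be closed by abstract cohomology alone. A transfinite chain argument would need conjugacy of complements over infinite locally finite subgroups, i.e.\ vanishing of $H^1$ for them, and that fails in general. Moreover, for uncountable locally finite groups even $H^2$ with coefficients on which $p$ is invertible need not vanish, since the rational cohomological dimension of locally finite groups grows with their cardinality. So abstract extensions of exactly the shape you consider need not split. The missing ingredient has to use definability and finite Morley rank, in particular to lift the definable bounded-exponent part of $\bar S$. This could be done, for instance, by induction on rank applied to its definable preimage, or by a Frattini-type argument using conjugacy of Sylow subgroups inside definable soluble groups. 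The remaining countable torus-by-finite part would then be lifted compatibly. As written, the proposal contains no such argument.
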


The next theorem follows from \cite[Theorem 2]{cherlin}.
% and \cite[Fact 2.4 and Corollary 2.7]{wiscons}.

\begin{theorem}\label{structuremorleyrank2}
A connected group of Morley rank $2$ is soluble.
\end{theorem}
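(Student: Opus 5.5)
The plan is to reduce to the classical Cherlin argument for groups of Morley rank $2$, which the paper cites as \cite[Theorem 2]{cherlin}. Let $G$ be connected of Morley rank $2$. By the convention that all subgroups are definable, a proper infinite definable subgroup $H$ of $G$ has rank $1$.

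\emph{Step 1: $G$ has an infinite proper definable subgroup.} If it does not, then $G$ is minimal, hence abelian by Reineke's lemma, and we are done. So assume there is an infinite definable subgroup $H$ of rank $1$. Passing to $H^0$, we may take $H$ connected. Then $H$ is minimal, hence abelian.

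\emph{Step 2: find a normal rank-$1$ subgroup.}
\begin{itemize}
\item If $G$ has an infinite proper definable normal subgroup $N$, pass to $N^0$, which is still normal. Then $N^0$ is connected of rank $1$, hence abelian, and $G/N^0$ is connected of rank $1$, hence abelian. So $G$ is soluble.
\item Otherwise, consider the action of $G$ on the coset space $G/H$, which has rank $1$. Since the only definable normal subgroups are then finite, and $G$ is connected, finite normal subgroups are central. Factoring out the finite centre, we get a connected group acting faithfully, transitively and definably on a rank-$1$ set.
\end{itemize}

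\emph{Step 3: the transitive-action case.} Here I would invoke Hrushovski's theorem on groups acting transitively on a strongly minimal set (or a rank-$1$ set of degree $1$): the group acting is of rank at most $3$, and rank $3$ occurs only for $\mathrm{PGL}_2(K)$ acting on the projective line. Since $\RM(G)=2$, the action must be the affine action $K_+\rtimes K^\times$ on $K$. In that case $G$ contains a normal subgroup isomorphic to $K_+$, contradicting the assumption of this case, or else $G$ is directly soluble. So in every case $G$ is soluble.

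\emph{Main obstacle.} The hard step is Step 3: classifying connected rank-$2$ groups acting faithfully and transitively on a rank-$1$ set. In practice I would cite Hrushovski's result together with Cherlin's original argument rather than reprove it. An alternative route is to show directly that $N_G(H)=H$ for all such $H$ and derive a contradiction by counting conjugates, showing that the conjugates of $H$ would have to cover too much of $G$ generically.
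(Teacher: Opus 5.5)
Your Steps 1 and 2 are essentially fine. The kernel of the action on $G/H$ is the core $\bigcap_g H^g$, which is finite and hence central; that kernel, not the whole centre, is what you factor out. Also, $G/H$ has degree $1$ because $G$ is connected, so it is strongly minimal.

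The gap is Step 3, which is where all the content lies. The version of Hrushovski's theorem you quote (rank at most $3$, rank $3$ only for $\operatorname{PGL}_2(K)$) says nothing about rank-$2$ actions, so you need its separate rank-$2$ clause. After your reduction, that clause is \emph{equivalent} to the statement. Conversely, if connected rank-$2$ groups are soluble, then $G'$ is a connected normal rank-$1$ subgroup acting regularly on the strongly minimal set. Zilber linearisation of the point stabiliser acting on it then yields $\mathrm{AGL}_1(K)$, and I believe this is how that clause is usually proved. So Step 3 is circular in substance. As a bare citation it is legitimate, but it gains nothing over the paper, which simply quotes Cherlin's theorem.

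Your fallback does not close the gap. Showing $N_G(H)=H$ and counting conjugates cannot give a contradiction on its own. Indeed, $\mathrm{AGL}_1(K)$ with $H=K^\times$ has all these features: $H$ is self-normalising, its conjugates intersect pairwise trivially, and their union is generic. Yet $\mathrm{AGL}_1(K)$ is soluble. The hypothesis that $G$ has no infinite proper definable normal subgroup must be used essentially. Here is one way to do it.
\begin{enumerate}
\item Reduce to $Z(G)=1$.
\item For $1\ne h\in H$ one gets $C_G(h)^0=H$, hence $H\cap H^g=1$ for $g\notin N:=N_G(H)$.
\item The union of the conjugates of any connected rank-$1$ subgroup is generic and $G$ has degree $1$, so all such subgroups are conjugate to $H$.
\item Let $X=G/N$ and $H_x:=G_x^0$. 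Two-point stabilisers are finite, so $H_x$ is transitive on $X\setminus\{x\}$, and, being abelian and acting faithfully, regular there. Thus $G$ is $2$-transitive and $G_x=H_x\rtimes T$ with $T$ a two-point stabiliser.
\item If $T\ne 1$, the sets $\bigcup_x(H_x\setminus\{1\})$ and $\bigcup_x(G_x\setminus H_x)$ are disjoint of rank $2$, contradicting degree $1$. Hence $G_x=H_x$ and the action is sharply $2$-transitive.
\item A fixed-point-free $t$ lies in the non-generic complement of $\bigcup_x G_x$, so $C_G(t)$ has rank $1$ and $C_G(t)^0=G_z$ for some $z$. Then $G_z=G_{z^t}$ with $z^t\ne z$, so $G_z\le G_z\cap G_{z^t}=1$, which is absurd. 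So every non-trivial element fixes exactly one point.
\item An element swapping two points squares to $1$, so there are involutions fixing any given point. Take involutions $i,j$ with distinct fixed points. Then $t=ij\ne 1$ fixes some $u$. Since $t^{-1}=t^i$ fixes $u^i$ and $t^{-1}$ has a unique fixed point, $i$ fixes $u$; likewise $j$ fixes $u$. This is a contradiction.
\end{enumerate}
None of this is in your proposal. Without it, or an explicit appeal to Cherlin's theorem, there is no proof.
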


Another important theorem is the Zilber Indecomposability Theorem. We shall only need the following consequence:
\begin{theorem}(See \cite[Theorem 3.13]{poizat}) Let $G$ be a group of finite Morley rank, $H$ a definable connected subgroup, and $X$ any set. Then the group $[H,X]$ is definable and connected.\end{theorem}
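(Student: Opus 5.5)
The natural route is to reduce the statement to the Zilber Indecomposability Theorem in its general form (\cite[Theorem 3.13]{poizat} or its standard formulation). That theorem says: if $(Y_i)_{i\in I}$ is a family of definable \emph{indecomposable} subsets of $G$, each containing $1$, then $\langle Y_i : i\in I\rangle$ is definable and connected, and equals a finite product $Y_{i_1}^{\pm1}\cdots Y_{i_n}^{\pm1}$. Recall that a definable $Y$ is indecomposable if, for every definable subgroup $N$, $Y$ meets either exactly one or infinitely many cosets of $N$. I will assume $X\subseteq G$, which is the setting in which the statement is applied later.

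For $x\in X$, put
$$Y_x:=\{[h,x]:h\in H\}=\{h^{-1}x^{-1}hx : h\in H\}=(x^{-1})^H\cdot x .$$
Then $[H,X]$ is the subgroup generated by $\bigcup_{x\in X}Y_x$. Each $Y_x$ is definable, being the image of $H$ under a definable map, and contains $1=[1,x]$. By Zilber's theorem it therefore suffices to prove that each $Y_x$ is indecomposable. Since right translation sends cosets of $N$ to cosets of $N^{x}$ bijectively, indecomposability is preserved under translation. So it is enough to show that the $H$-conjugacy orbit $O=(x^{-1})^H$ is indecomposable.

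The main step is the indecomposability of an orbit of a connected group. Let $N$ be a definable subgroup, and suppose $O$ meets only finitely many left cosets of $N$, say $g_1N,\dots,g_kN$. Then $H$ acts on $O$ by conjugation, and the partition $\{O\cap g_jN\}$ is a definable partition of $O$ into finitely many pieces. The difficulty is that conjugation by $h$ need not permute the cosets of $N$, since $N$ is not assumed $H$-invariant. I would handle this with the standard trick. Consider the definable equivalence relation $E$ on $H$ given by
$$h\,E\,h' \iff (x^{-1})^h N=(x^{-1})^{h'}N,$$
which has finitely many classes. Then use the descending chain condition to pass to the subgroup
$$H_1=\bigcap_{o\in O}\{h\in H: o^hN = oN\}.$$
Expressed via stabilisers of the finitely many classes, $H_1$ is a definable subgroup of finite index. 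By connectedness $H_1=H$, so every element of $O$ lies in the coset $x^{-1}N$. Hence $O$ meets only one coset, as required.

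I expect this step, making the finite-index stabiliser argument rigorous, to be the main obstacle. If the direct route proves awkward, I would fall back on the textbook lemma (Borovik--Nesin) that $H$-invariant images of connected groups, and in particular sets of the form $\{h^{-1}h^{x}\}$, are indecomposable, and cite it. With indecomposability of each $Y_x$ established, Zilber's theorem gives that $[H,X]=\langle Y_x:x\in X\rangle$ is definable and connected.
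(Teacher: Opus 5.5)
Your reduction is the standard route behind the result the paper cites from Poizat; the paper itself gives no argument. You write $[H,X]$ as generated by the definable sets $Y_x=\{[h,x]:h\in H\}=(x^{-1})^H x$, each containing $1$, pass by translation to the orbit $O=(x^{-1})^H$, and invoke Zilber's Indecomposability Theorem. These steps are fine. Your set $H_1=\{h\in H: o^{-1}o^h\in N \text{ for all } o\in O\}$ is also a definable subgroup, since closure under products uses that $O$ is $H$-invariant.

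The gap is the claim that $H_1$ has finite index in $H$. Expressing $H_1$ ``via stabilisers of the finitely many classes'' does not prove this. When $N$ is not $H$-invariant, right translation does not permute the $E$-classes $C_j=\{h:(x^{-1})^h\in g_jN\}$, so there is no action on a finite set to appeal to. $H_1$ is just the intersection of the setwise right stabilisers of the $C_j$. Such a stabiliser has rank at most $\RM(C_j)$, so it has infinite index as soon as $C_j$ is non-generic, and since $H$ is connected at most one class is generic. Finite index of these stabilisers is therefore equivalent to there being a single class, which is exactly what you want to prove. The descending chain condition does not rescue this as stated either: the sets $\{h: o^hN=oN\}$ for individual $o$ are not subgroups, so the DCC does not apply to them.

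The missing idea is to replace $N$ by its $H$-core before taking stabilisers.
\begin{enumerate}[(1)]
\item Conjugation by $h\in H$ maps $O$ bijectively onto itself and sends $gN$ to $g^hN^h$. Hence $O$ meets only finitely many left cosets of every $N^h$.
\item By the DCC, $N_1=\bigcap_{h\in H}N^h=N^{h_1}\cap\cdots\cap N^{h_m}$ for some $h_1,\dots,h_m\in H$. Since $gN_1=gN^{h_1}\cap\cdots\cap gN^{h_m}$, $O$ meets only finitely many left cosets of $N_1$.
\item $N_1$ is definable and normalised by $H$, so $(oN_1)^h=o^hN_1$ is a genuine action of $H$ on the finite set $\{oN_1:o\in O\}$.
\item The kernel of this action is a definable subgroup of finite index, contained in your $H_1$. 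By connectedness it equals $H$, which gives $O\subseteq x^{-1}N_1\subseteq x^{-1}N$.
\end{enumerate}
With this step supplied, the rest of your argument goes through. Your fallback of citing the Borovik--Nesin indecomposability lemma is legitimate, but then the proposal reduces to a citation, just as the paper's does.
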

In particular, for a connected group of finite Morley rank, the groups in the derived series and in the lower central series are all definable. Moreover, if $B$ is a connected brace and $L$ a strong left ideal, then $[B,L]$ is connected and definable by Remark \ref{r:commutator}.
\smallskip

We now turn to nilpotent groups of finite Morley rank. We shall call a definable normal subgroup $H$ of $G$ \emph{minimal normal} if it has no definable infinite proper subgroup normal in $G$.

Recall that a group is {\em radicable} if the map $x\mapsto x^n$ is surjective for all $n$. (For abelian groups, this is divisibility.)
\begin{theorem}[{see \cite[Theorem 6.8]{borovik}}]\label{t:nesin}
Let $G$ be a connected nilpotent group of finite Morley rank. Then $G$ is a central product $G=D\star F$, where  $F$ has finite exponent, $D$ is radicable, and both are definable, connected and characteristic. In particular $D\cap F$ is finite.
\end{theorem}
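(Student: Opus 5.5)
The plan is to build $D$ from powers and $F$ from bounded-exponent elements, first for abelian $G$ and then for nilpotent $G$ by induction along the lower central series.

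For $n\ge1$ let $G^{(n)}$ be the subgroup generated by all $n$-th powers. It is generated by the image of the connected group $G$ under the definable map $x\mapsto x^n$. By Zilber indecomposability, in the form quoted above, $G^{(n)}$ is therefore definable and connected.

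The chain $G^{(1!)}\ge G^{(2!)}\ge G^{(3!)}\ge\cdots$ is a descending chain of definable subgroups. By the descending chain condition it stabilises, say at $D:=G^{(N)}$ with $N=k!$. Every $G^{(n)}$ is characteristic, so $D$ is definable, connected and characteristic. Moreover $D^{(m)}=D$ for all $m$, since $G^{(Nm)}\le D^{(m)}\le D$ and $G^{(Nm)}=D$ by stabilisation. Also $G/D$ is nilpotent and generated by elements of exponent dividing $N$, hence of finite exponent.

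\emph{Abelian case.} Here $D^{(m)}=D$ says exactly that $D$ is divisible. Since $G^N=D=D^N$, for each $g\in G$ there is $d\in D$ with $g^N=d^N$, so $gd^{-1}\in G[N]$. Thus $G=D\cdot G[N]$. Put $F:=G[N]^0$; it is definable, connected, characteristic and of exponent dividing $N$. Since $F$ has finite index in $G[N]$, $D F$ has finite index in $G$, and connectedness gives $G=DF$. Finally, $(D\cap F)^0$ is a connected subgroup of $D$ of finite exponent, so it is trivial. Indeed, a connected abelian group of finite Morley rank has divisible $N$-th power map image equal to itself, so it cannot have finite exponent unless trivial. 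Hence $D\cap F$ is finite.

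\emph{Nilpotent case.} I would proceed by induction on the nilpotency class, and there are three steps.
\begin{enumerate}[(a)]
\item Show that $D$ is radicable. The condition $D^{(m)}=D$ gives radicability of $D/D'$, and I would propagate it up the lower central series. The iterated commutator maps $(D/D')^{\otimes i}\to\gamma_i(D)/\gamma_{i+1}(D)$ are multilinear and surjective, so each quotient $\gamma_i(D)/\gamma_{i+1}(D)$ is divisible. One then lifts $m$-th roots through a central series, which is the standard fact that a nilpotent group with divisible abelian central factors is radicable.
\item Let $F$ be the connected component of the subgroup generated by the elements of order dividing some suitable $M$. I would check that this subgroup has finite exponent, using that the torsion in a connected nilpotent group of finite Morley rank is controlled by the abelian factors. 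Then $G=DF$, because $G/D$ has finite exponent and $F$ covers it up to a finite index.
\item Prove $[D,F]=1$. Working modulo $\gamma_{i+1}(G)$, the commutator map restricted to $D\times F$ into the successive quotients is bilinear. Its image is then both divisible (coming from $D$) and of bounded exponent (coming from $F$), hence connected of finite exponent and divisible, hence trivial. Inducting down the series gives $[D,F]=1$. The finiteness of $D\cap F$ then follows as in the abelian case, applied to the abelian group $(D\cap F)^0$, which lies in the centre of $D$.
\end{enumerate}

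I expect the main obstacle to be (a) together with (c), namely the passage from the power-subgroup condition $D^{(m)}=D$ to genuine radicability, and the bilinearity argument showing that $D$ and $F$ commute. The abelian case and the definability and connectedness bookkeeping are routine given Zilber indecomposability and the descending chain condition.
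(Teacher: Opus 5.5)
The paper does not prove this statement; it quotes it from Borovik--Nesin. Your sketch therefore has to stand on its own, and it has a genuine gap at step (b). The equality $G=DF$ is the real content of the theorem once $G$ is non-abelian, and you only assert that $F$ ``covers $G/D$ up to a finite index''.

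In the abelian case this came from $g^N=d^N\Rightarrow (gd^{-1})^N=1$, which uses that $g$ and $d$ commute. For nilpotent $G$ you can still pick $d\in D$ with $d^N=g^N$. But nothing in your argument shows that $gd^{-1}$, or any element of the coset $gD$, has bounded order. What is missing is essentially $G=D\,C_G(D)$, and this needs genuine finite Morley rank input. One possible route:
\begin{itemize}
\item One checks from Theorem~\ref{sylow} that $D$ has only finitely many elements of each finite order. These finite sets are normal in $G$, so the connected group $G$ centralises them, and the torsion subgroup $\tau(D)$ is central in $G$.
\item $N$-th roots are unique in the torsion-free nilpotent group $D/\tau(D)$. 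So $(d^g)^N=(d^N)^g=(g^N)^g=g^N=d^N$ gives $[d,g]\in\tau(D)\le Z(G)$.
\item Then $[d,g]^N=[d^N,g]=[g^N,g]=1$, and the class-$2$ power formula gives $(gd^{-1})^{N^2}=1$.
\end{itemize}
Only after such an argument do you get $G=D\langle G[N^2]\rangle$. You would still need to show that this subgroup, or its definable hull, is definable of bounded exponent before taking its connected component as $F$, and your sketch does not address that either.

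There are some further problems.
\begin{itemize}
\item Your appeal to Zilber indecomposability for the definability of $G^{(n)}$ is a misapplication. The quoted consequence concerns groups of the form $[H,X]$, but $x\mapsto x^n$ is not a homomorphism when $G$ is non-abelian, and indecomposability of the set of $n$-th powers is not automatic. Without definability you cannot invoke the descending chain condition, so this needs a separate argument.
\item In the abelian case, your justification that $D\cap F$ is finite is wrong as written: $K_+$ in characteristic $p$ is connected abelian of finite exponent. The correct reason is a rank count. The map $x\mapsto Nx$ is a surjective definable endomorphism of $D$, so its kernel, which contains $D\cap F$, has Morley rank $0$.
\item In the nilpotent case the same count works, with $e$ the exponent of $F$. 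The fibres of $x\mapsto x^e$ on $D$ are unions of cosets of the central subgroup $(D\cap F)^0$, which would force $\mathrm{RM}((D\cap F)^0)=0$.
\item Steps (a) and (c), which you flag as the main obstacles, are routine. Step (c) is one line: if inductively $[D,F]\le\gamma_i(G)$, $f^e=1$ and $d=d_1^e$, then modulo $\gamma_{i+1}(G)$ we have $[d,f]\equiv[d_1,f]^e\equiv[d_1,f^e]=1$.
\end{itemize}
The real difficulty is the splitting in (b).
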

\begin{theorem}\label{sylow}
In a nilpotent radicable group of finite Morley rank, the Sylow $p$-subgroup is a finite extension of a direct product of finitely many Pr\"ufer $p$-groups. In particular it is abelian-by-finite.
\end{theorem}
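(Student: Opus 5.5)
Let $G$ be a nilpotent radicable group of finite Morley rank and $S$ a Sylow $p$-subgroup. The plan is to reduce to the abelian case and then use descending chain conditions.

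First I would show that torsion elements of $G$ are central, or at least that the torsion of $G$ is controlled by the centre. Since $G$ is connected (a radicable group of finite Morley rank has no proper definable subgroup of finite index, because $G/H^0$ finite and radicable forces it trivial), one can argue by induction on nilpotency class. Take the upper central series $Z_1\le Z_2\le\cdots$. Each $Z_i$ is definable, and each $G/Z_i$ is again radicable nilpotent. For $x$ of order $p^k$ and $g\in G$, the map $y\mapsto[x,y]$ into a central section is a homomorphism. Since $G$ is connected, its image is connected and killed by $p^k$.

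The key step is the standard fact that in a connected nilpotent group the torsion elements of a radicable group lie in a central subgroup. Via Theorem \ref{t:nesin} applied to $G=D$ we have $F$ finite. Then I would show that the $p$-torsion $T$ of $Z(G)$ is a divisible abelian $p$-group. $Z(G)^0$ is radicable, being a quotient-stable piece, or one argues directly. Moreover, $\Omega_1(Z(G))=\{z:pz=0\}$ is a definable elementary abelian $p$-group. If it were infinite, its connected component would be an infinite definable connected subgroup of exponent $p$ inside $G$. Its normal closure generates, via Theorem \ref{t:nesin}, a contradiction with radicability: $G=D$ forces $F$ finite, and a connected definable subgroup of finite exponent lies in some $F$-type part. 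I expect this to be the main obstacle. To handle it, I would try to show that the definable subgroup generated by all connected $p$-elementary subgroups is characteristic, connected and of finite exponent, hence contained in $F$, which is finite.

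Hence $\Omega_1$ of the $p$-torsion of $Z(G)$ is finite, say of size $p^r$. A divisible abelian $p$-group with finite socle is $\mathbb Z(p^\infty)^r$, and the reduced part of an abelian $p$-group with finite socle and DCC on definable subgroups is finite. This gives the structure for the abelian $p$-torsion. For $S$ itself, I would use the fact that $S$ is nilpotent (as a subgroup of $G$) and has finite $\Omega_1$ in every abelian section. Then $S$ is a $p$-group satisfying min on subgroups (a Chernikov group), so it is a finite extension of its divisible part $S^\circ$. The divisible part of a nilpotent Chernikov group is central in it, hence abelian, and it is a finite direct product of Prüfer groups. This yields that $S$ is abelian-by-finite.
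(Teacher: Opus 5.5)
The paper quotes this result without proof, so your argument has to stand on its own. It breaks at exactly the point you call the main obstacle. To make $\Omega_1(Z(G))$ finite you need that a radicable nilpotent group of finite Morley rank has no infinite definable connected subgroup of exponent $p$. You propose to get this by placing such a subgroup inside the $F$ of Theorem~\ref{t:nesin}. Your deduction that $F$ is finite is fine: $G/D\cong F/(F\cap D)$ is radicable of finite exponent, hence trivial. But nothing in Theorem~\ref{t:nesin} says that $F$ contains every connected definable subgroup of finite exponent. Passing to $G/F$ just poses the same question for the radicable group $G/F$.

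Worse, in the paper the statement ``radicable implies no infinite subgroup of finite exponent'' is the corollary \emph{derived from} Theorem~\ref{sylow}, so this step is circular as it stands. Your first paragraph needs the same missing fact to pass from ``the image of $y\mapsto[x,y]$ is connected and killed by $p^k$'' to triviality. And once torsion is known to be central, $S$ is abelian and your Chernikov paragraph is unnecessary; as written, its hypothesis on all abelian sections is unproved.

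The missing idea is to use radicability directly and then count ranks. Let $x^n=1$ with $x\in Z_2(G)$. Then $(x,y)\mapsto[x,y]$ is bilinear into $Z(G)$, so $[x,z^n]=[x^n,z]=1$. Since every element of $G$ is an $n$-th power, $x\in Z(G)$. Induction on the class (applied to $G/Z(G)$) then shows that all torsion is central. A root of a torsion element is torsion, hence central. Taking $p$-parts, the Sylow $p$-subgroup $S=T_p$ is a divisible abelian $p$-group inside $Z=Z(G)$.

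Now $x\mapsto x^p$ maps the definable group $Z[p^{k+1}]$ onto $Z[p^k]$ with kernel $Z[p]$; surjectivity uses divisibility of $T_p\le Z$. Hence $\RM(Z[p^k])=k\,\RM(Z[p])\le\RM(G)$ for every $k$, so $Z[p]$ is finite, of order $p^r$ say. It follows that $S\cong\mathbb Z(p^\infty)^r$. This gives the statement even without the finite extension, and uses neither Theorem~\ref{t:nesin} nor the theory of Chernikov groups.
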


\begin{corollary}
Let $G$ be a connected nilpotent group of finite Morley rank. The following are equivalent:
\begin{enumerate}[(i)]
\item $G$ is radicable.
\item $G$ has no infinite subgroup of finite exponent.
\item Every connected definable section of $G$ is radicable.
\item No connected definable section of $G$ has an infinite subgroup of finite exponent.
\end{enumerate}\end{corollary}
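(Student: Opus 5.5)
We prove the cycle (i)$\Rightarrow$(ii)$\Rightarrow$(iv)$\Rightarrow$(iii)$\Rightarrow$(i), using Theorem \ref{t:nesin} to write $G=D\star F$, Theorem \ref{sylow} for the radicable part, and the fact that sections of connected nilpotent groups are again connected nilpotent of finite Morley rank.

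\emph{(iii)$\Rightarrow$(i)} is trivial: take the section $G/\{1\}$.

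\emph{(i)$\Rightarrow$(ii).} Let $G$ be radicable and $H\le G$ of exponent $n$. Write $G=D\star F$. The image of $G$ in $G/D$ is a quotient of $F$, hence of finite exponent, and it is radicable. A radicable group of finite exponent $m$ satisfies $x=y^m=1$ for every $x$, so it is trivial. Thus $G=D$. Now $H$ is a $\pi$-group for the finite set $\pi$ of primes dividing $n$. In a nilpotent group the torsion part splits into its $p$-components, so $H$ is the product of its Sylow $p$-parts $H_p$, one for each $p\in\pi$. Each $H_p$ lies in a Sylow $p$-subgroup of $D$, which by Theorem \ref{sylow} is a finite extension of $(\mathbb Z_{p^\infty})^k$. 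A subgroup of $(\mathbb Z_{p^\infty})^k$ of bounded exponent is finite, so $H_p$ is finite, and hence $H$ is finite.

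\emph{(ii)$\Rightarrow$(iv).} This is the main step, since we must pass to sections. Let $K\trianglelefteq L\le G$ be definable with $L/K$ connected, and suppose $L/K$ had an infinite subgroup of finite exponent. We may replace $L$ by $L^0$: the image of $L^0$ has finite index in the connected group $L/K$, so the two coincide. Now apply Theorem \ref{t:nesin} to $L/K$. The factor $F_{L/K}$ must be finite: otherwise the infinite bounded-exponent subgroup sits, up to finite index, in the radicable factor $D_{L/K}$, which has only finitely many Prüfer ranks, a contradiction. So $F_{L/K}$ is an infinite definable connected subgroup of finite exponent.

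We then need to lift this to $G$. Pull $F_{L/K}$ back to a definable connected $M\le L$ with $M/K=F_{L/K}$. By (ii), $M$ is radicable, since (i)$\Leftrightarrow$(ii) holds for the connected nilpotent group $M$, using the $D\star F$ decomposition: an infinite $F$ would itself be an infinite subgroup of finite exponent. But the radicable group $M$ has quotient $M/K$ of finite exponent, so the quotient is trivial, as in (i)$\Rightarrow$(ii). This contradicts the infiniteness of $F_{L/K}$.

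The one point to check is the claim used above for $M$: that (ii) for $G$ implies $G$ is radicable. This is exactly the observation that $F$ is then finite and connected, hence trivial, so $G=D$. The same observation applied to $M$, which inherits (ii) from $G$ as a subgroup, completes the argument.

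\emph{(iv)$\Rightarrow$(iii).} Each connected definable section satisfies (ii) by hypothesis. By the observation just made, it is therefore radicable.
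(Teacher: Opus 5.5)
Your proof is correct and follows essentially the paper's approach: the Sylow/Pr\"ufer argument for (i)$\Rightarrow$(ii) and the triviality of the finite connected factor $F$ for (ii)$\Rightarrow$(i) are the same, and your (ii)$\Rightarrow$(iv) is just the contrapositive of the paper's direct (i)$\Rightarrow$(iii) (connected definable subgroups inherit (ii) and are radicable, quotients of radicable groups are radicable), with an unneeded detour through $F_{L/K}$. Two small slips: ``$F_{L/K}$ must be finite'' should read ``cannot be finite'', and since $K$ need not be connected, take $M$ to be the connected component of the preimage, so that $M/(M\cap K)\cong F_{L/K}$ (rather than $M/K$).
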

\begin{proof}
Suppose that $G$ is radicable, and let $H$ be an infinite subgroup of finite exponent. As $H$ is a direct product of its Sylow subgroups, we may assume that $H$ is a $p$-group for some prime $p$. By Theorem \ref{sylow} it is contained in a finite extension of a direct product of finitely many Pr\"ufer $p$-groups. But such a product does not contain an infinite subgroup of finite exponent. This shows $(i)\Rightarrow(ii)$.

Conversely, suppose that $G$ does not have an infinite subgroup of finite exponent. Then $F=\{1\}$ in the decomposition of Theorem \ref{t:nesin}, so $G=D$ is radicable. This shows $(ii)\Rightarrow(i)$.

Clearly $(iii)\Rightarrow(i)$ and $(iv)\Rightarrow(ii)$. Moreover $(iii)\Leftrightarrow(iv)$ follows from $(i)\Leftrightarrow(ii)$.

Now suppose $G$ is radicable. Then $G$ does not have an infinite subgroup of finite exponent, and neither does any definable subgroup. So a connected definable subgroup $H$ is radicable. If $N$ is a definable normal subgroup of $H$, the quotient $H/N$ remains radicable. This shows $(i)\Rightarrow(iii)$.	
\end{proof}
	
\begin{lemma}\label{center}
Let $G$ be a connected nilpotent group and $H\unlhd G$ an infinite normal subgroup. Then $H\cap Z(G)$ is infinite. In particular, every minimal normal subgroup $H$ of $G$ is central. 
\end{lemma}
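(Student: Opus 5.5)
The plan is to follow the standard argument through the upper central series, using definability and descending chain conditions where needed. Presumably $G$ is a connected nilpotent group of finite Morley rank and $H$ is a definable infinite normal subgroup.

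First step: the upper central series $1=Z_0(G)\le Z_1(G)\le\dots\le Z_c(G)=G$ consists of definable characteristic subgroups. Centralisers of finite sets are definable, and by the descending chain condition on definable subgroups the centre is a finite intersection of centralisers, hence definable. Inductively each $Z_{i+1}(G)$ is the preimage of the centre of the definable quotient $G/Z_i(G)$, so it is definable too. Let $i$ be least such that $H\cap Z_i(G)$ is infinite; it exists because $H\cap Z_c(G)=H$ is infinite. If $i=1$ we are done, so suppose $i\ge 2$. Then $A:=H\cap Z_{i-1}(G)$ is finite and normal in $G$, while $N:=H\cap Z_i(G)$ is infinite, definable and normal.

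Second step: by connectedness, $G$ centralises the finite normal subgroup $A$. Indeed, for $a\in A$ the conjugacy class $a^G$ lies in $A$, so $C_G(a)$ is a definable subgroup of finite index and equals $G$. Hence $A\le Z(G)$. Now fix $n\in N$ and consider the map $\phi_n\colon G\to A$, $g\mapsto [n,g]$. This lands in $A$ because $[N,G]\le H\cap Z_{i-1}(G)$. Since $[n,g]$ is central, the commutator identity $[n,gh]=[n,h][n,g]^h$ reduces to $[n,gh]=[n,g][n,h]$, so $\phi_n$ is a definable homomorphism. Its image is finite, so its kernel $C_G(n)$ has finite index and therefore equals $G$. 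Thus $N\le Z(G)$, which means $H\cap Z(G)\supseteq N$ is infinite, contradicting the minimality of $i$ unless $i=1$. In either case $H\cap Z(G)$ is infinite.

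For the ``in particular'': if $H$ is minimal normal, then $H\cap Z(G)$ is a definable subgroup of $H$ that is normal in $G$ and infinite. By minimality it is not a proper subgroup, so $H\le Z(G)$. (If minimal normal subgroups are only meant to be infinite up to finite, the same conclusion holds since $H$ is then connected.)

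The only delicate point is showing that the commutator map into the finite central subgroup is a homomorphism, so that connectedness can be applied. Definability of the $Z_i(G)$ and of $N$ is routine in finite Morley rank.
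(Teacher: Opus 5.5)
Your proof is correct and is essentially the paper's argument: take the least $m$ with $H\cap Z_m(G)$ infinite, observe that for $h\in H\cap Z_m(G)$ all commutators $[h,g]$ lie in the finite set $H\cap Z_{m-1}(G)$, so $C_G(h)$ has finite index and equals $G$ by connectedness. Your extra step is valid but not needed: you first show $A$ is central so that $g\mapsto[n,g]$ is a homomorphism, whereas $h^g=h[h,g]$ already shows directly that the conjugacy class of $h$ is finite.
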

\begin{proof}
Let $m$ be minimal such that $H\cap Z_m(G)$ is infinite. If $h\in H\cap Z_m(G)$, then $[G,h]\subseteq Z_{m-1}(G)\cap H$ which is finite. Hence $C_G(h)$ has finite index in $G$, and must be all of $G$ by connectivity. It follows that $H\cap Z_m(G)= Z(G)\cap H$ is infinite.
\end{proof}

\begin{lemma}[Normaliser Condition]\label{l:NC} Let $G$ be a nilpotent group of finite Morley rank, and $H\le G$ a subgroup of infinite index. Then the index $|N_G(H):H|$ is infinite.\end{lemma}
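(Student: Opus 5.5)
We prove the Normaliser Condition (Lemma \ref{l:NC}) by climbing the upper central series of $G$. Let $Z_0(G)=\{1\}\le Z_1(G)\le\cdots\le Z_c(G)=G$ be the upper central series. Each $Z_i(G)$ is definable: it is obtained from $Z_{i-1}(G)$ as the preimage of the centre of $G/Z_{i-1}(G)$, and centres are definable. Since $H$ has infinite index in $G=Z_c(G)$, the chain $HZ_0(G)\le HZ_1(G)\le\cdots\le HZ_c(G)=G$ starts at $H$ and ends at $G$. We choose $i$ minimal such that $|HZ_{i}(G):H|$ is infinite; then $i\ge1$, and $|HZ_{i-1}(G):H|$ is finite.

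The first step is to show that $Z_i(G)$ normalises $HZ_{i-1}(G)$. For $z\in Z_i(G)$ and $h\in H$ we have $[h,z]\in Z_{i-1}(G)$, so $h^z\in hZ_{i-1}(G)$. Moreover $Z_{i-1}(G)$ is normal in $G$. Hence $Z_i(G)\le N_G(HZ_{i-1}(G))$, and so $HZ_i(G)\le N_G(HZ_{i-1}(G))$.

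The second step transfers this back to $H$. The group $H_1:=HZ_{i-1}(G)$ contains $H$ with finite index, and $HZ_i(G)$ normalises $H_1$ with $|HZ_i(G):H_1|$ infinite. We want infinitely many cosets of $H$ inside $N_G(H)$. Here we use finite Morley rank: it suffices to work with definable groups, replacing $H$ by a definable envelope if necessary. Assuming $H$ is definable, $N_{H_1}(H)$ has finite index in $N_G(H_1)$, because $N_G(H_1)$ acts by conjugation on the set of subgroups $H'\le H_1$ conjugate to $H$, and this family should be finite. That finiteness is not automatic, so we argue more carefully. The finite-index subgroup $H$ of $H_1$ contains the connected component $H_1^0$, and $H_1^0$ is characteristic in $H_1$, hence normalised by $N_G(H_1)$. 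Since $H_1/H_1^0$ is finite, $N_G(H_1)$ acts on the finitely many subgroups of $H_1$ containing $H_1^0$. So the stabiliser of $H$, which is $N_G(H)\cap N_G(H_1)$, has finite index in $N_G(H_1)$. Because $|N_G(H_1):H|$ is infinite, $|N_G(H):H|$ is infinite as well.

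The main obstacle is the possibly non-definable $H$. For that case we would pass to the definable hull $d(H)$, the smallest definable subgroup containing $H$, which exists by the descending chain condition. We would then use that in a nilpotent group of finite Morley rank $H$ has finite index in $d(H)$ or apply the argument to $d(H)$ directly, and handle the comparison between $N(H)$ and $N(d(H))$ by the same finite-orbit counting.
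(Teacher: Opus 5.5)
Your proof is correct for definable $H$, and it takes a different route from the paper. Definable $H$ is the intended reading: the paper's standing convention is that all groups under consideration are definable.

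\textbf{The paper's route.} It chooses $m$ maximal with $Z_m(G)^0\le H$ and uses connectedness directly. For each $g\in G$, the map $x\mapsto [g,x]Z_m(G)^0$ is a definable homomorphism from $Z_{m+1}(G)^0$ to the finite group $Z_m(G)/Z_m(G)^0$, hence trivial. So $[Z_{m+1}(G)^0,G]\le Z_m(G)^0\le H$, and the connected group $Z_{m+1}(G)^0$ itself normalises $H$. Finally $|Z_{m+1}(G)^0H:H|$ is infinite because $Z_{m+1}(G)^0\cap H$ is a proper definable subgroup of a connected group.

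\textbf{Your route.} You take $i$ minimal with $|HZ_i(G):H|$ infinite and note that $Z_i(G)$ normalises the finite extension $H_1=HZ_{i-1}(G)$. You then descend to $H$ through the finitely many subgroups between $H_1^0$ and $H_1$. Here $H_1^0$ is only definably characteristic, but that suffices because conjugation by elements of $N_G(H_1)$ is definable.

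\textbf{Comparison.} Your route is more elementary: it needs nothing about connected components of the $Z_j(G)$ and no homomorphism-with-finite-image trick. The paper's route yields an explicit connected definable subgroup of $N_G(H)$ not contained in $H$. Both use definability of $H$ exactly once. You need $H_1^0\le H$; the paper needs $Z_{m+1}(G)^0\cap H$ to have infinite index in $Z_{m+1}(G)^0$.

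You should delete your last paragraph, for three reasons. It is unnecessary. Its claim that $H$ has finite index in its definable hull is false: $\langle 2\rangle\le\mathbb C^\times$ has hull $\mathbb C^\times$. And no argument can handle non-definable $H$, because the lemma fails there.

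Here is a counterexample. Let $G$ be the group of upper unitriangular $3\times3$ matrices over an algebraically closed field $K$ of characteristic $p>0$, written as triples with $(a,b,c)(a',b',c')=(a+a',b+b',c+c'+ab')$. Let $W<K_+$ have index $p$; it is necessarily non-definable, as $K_+$ is connected. Put $H=\{(a,0,c):a\in K,\ c\in W\}$. Then $|G:H|$ is infinite. On the other hand $(x,y,z)^{-1}(a,0,c)(x,y,z)=(a,0,c+ay)$, which forces $N_G(H)=\{(x,0,z):x,z\in K\}$. Hence $|N_G(H):H|=|K:W|=p$ is finite.
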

\begin{proof} Let $m$ be maximal such that $Z_m(G)^0\le H$. Now for every $g\in G$ the map $x\mapsto [g,x]+Z_m(G)^0$ is a definable homomorphism from $Z_{m+1}(G)^0$ to $Z_m(G)/Z_m(G)^0$ with finite image, so its kernel is a definable subgroup of finite index and must equal $Z_{m+1}(G)^0$. It follows that $[Z_{m+1}(G)^0,G]\le H$. Hence $Z_{m+1}(G)^0\le N_G(H)$, and $$|N_G(H):H|\ge |Z_{m+1}(G)^0H:H|$$ is infinite.\end{proof} 

\begin{lemma}\label{lemnotbothfe}
Let $G$ be a group and $H$ a normal subgroup such that both $G/H$ and $H$ are connected abelian, and $H$ has no definable proper $G$-normal subgroup. Suppose that $H$ does not have finite exponent, and either\begin{enumerate}[(i)]
\item $H\le Z(G)$ and $G/H$ has no proper infinite normal subgroup, or
\item $G/H$ has finite exponent.
\end{enumerate}
Then $G$ is abelian.
\end{lemma}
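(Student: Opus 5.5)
The plan is to reduce both cases to the situation where $H$ is central, and then study, for each $g\in G$, the commutator map $x\mapsto[g,x]$.

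\emph{Step 1: in case (ii), show that $H\le Z(G)$.} Since $H$ is abelian, $G$ acts on $H$ by conjugation through the abelian group $\bar G=G/C_G(H)$, which is a quotient of $G/H$. This quotient is connected, being the image of the connected group $G/H$, and it has finite exponent, say $n$. By hypothesis $H$ has no infinite definable proper $G$-invariant subgroup, so the action of $\bar G$ on $H$ is faithful and irreducible. If $\bar G$ were nontrivial, the Zilber Linearisation Theorem \ref{ZilberLin} would give a field $K$ with $\bar G\hookrightarrow K^\times$. But $K^\times$ contains only finitely many elements of order dividing $n$, so $\bar G$ would be finite, hence trivial by connectivity. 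Therefore $C_G(H)=G$, and in both cases we may assume $H\le Z(G)$.

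\emph{Step 2: properties of the commutator map.} With $H$ central and $G/H$ abelian, $G$ has class at most $2$. So for each $g\in G$ the map $\phi_g:x\mapsto[g,x]$ is a definable homomorphism $G\to H$ with $H\le\ker\phi_g=C_G(g)$. Its image $[g,G]$ is a connected definable subgroup of $H$, and it is normal in $G$ because it is central. By the minimality hypothesis on $H$, either $[g,G]$ is finite, hence trivial by connectivity, or $[g,G]=H$. Moreover $[g,x]^m=[g^m,x]$ for all integers $m$, since $H$ is central.

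\emph{Step 3: the key claim.} I claim that if $gH$ has finite order $m$ in $G/H$, then $[g,G]=1$. Indeed, $g^m\in H\le Z(G)$, so $[g,G]^m=[g^m,G]=1$. Thus $[g,G]$ has finite exponent, and it cannot equal $H$ because $H$ does not have finite exponent. By Step 2, $[g,G]$ is trivial. In case (ii) every $gH$ has finite order, so every $g$ is central and $G$ is abelian.

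\emph{Step 4: case (i).} Here $C_G(g)/H$ is a definable subgroup of the abelian group $G/H$, hence normal. By hypothesis it is then either all of $G/H$, giving $g\in Z(G)$, or finite. In the latter case $gH\in C_G(g)/H$ has finite order, and Step 3 gives $[g,G]=1$, i.e.\ $C_G(g)=G$, contradicting the finiteness of $C_G(g)/H$ (which would force $G/H$, and so $G$, to be degenerate). In either case every $g$ is central, so $G$ is abelian.

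The main obstacle is Step 1, the use of linearisation. One must check that irreducibility is exactly what the hypothesis on $H$ provides, reading "proper" as "infinite proper", as the finite case is harmless by connectivity. One must also check that the finitely many roots of unity of bounded order in $K^\times$ force the connected acting group to be trivial.
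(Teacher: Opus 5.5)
Your proposal is correct and follows essentially the same route as the paper. In case (ii) you obtain $H\le Z(G)$ via linearisation and the finiteness of roots of unity of bounded order, which is exactly what the paper's appeal to Lemma \ref{Kequi} amounts to. You then use the commutator homomorphism $x\mapsto[g,x]$, the identity $[g^n,x]=[g,x]^n$ and the infinite exponent of $H$ to force every $g$ to be central, as in the paper. You argue directly rather than by contradiction, and in Step 4 no contradiction is actually needed, since Step 3 already gives $g\in Z(G)$.
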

\begin{proof}Note that connectedness of $G/H$ and $H$ implies connectedness of $G$.
\begin{enumerate} [(i)]
\item Assume $G$ not abelian, and take $g\in G\setminus Z(G)$. Since $G/H$ is abelian and $H\le Z(G)$, the function $\phi:x+H\mapsto [g,x]$ is a definable homomorphism from $G/H$ to $H$ with connected image. As $g$ is not central, $\ker(\phi)$ is a proper normal subgroup of $G/H$ and must be finite, so there is $n<\omega$ such that $g^n\in H$. But $\im(\phi)\le Z(G)$ is normal in $G$ and infinite, so $\im(\phi)=H$. Then $0=[g^n,x]=[g,x]^n$ for all $x\in G$, so $H$ has exponent $n$, a contradiction. 
\item The action of $G/H$ on $H$ by conjugation is irreducible, whence trivial by Lemma \ref{Kequi}, so $H\leq Z(G)$. As $G/H$ has finite exponent, there is $n$ such that $g^n\in H$ for all $g\in G$. We now finish as in part~(i).\qedhere
\end{enumerate}  
\end{proof}

The following definition allows us to avoid talking about finite index subgroups and quotients by a finite normal subgroup (see also Lemma~\ref{Kequi}).

\begin{definition} Two connected groups $G$ and $H$ are {\em isogenous} if there exist finite normal subgroups $G_0\unlhd G$ and $H_0\unlhd H$ such that $G/G_0\cong H/H_0$. This is clearly an equivalence relation, which we denote by $G\isog H$. Moreover, we say that $G$ is {\em isogenically embeddable} into $H$ if $G$ is isogenous to a subgroup of $H$.

We call $G$ and $H$ {\em definably isogenous} if there is a definable isogeny between $G$ and $H$. Again this is an equivalence relation. 
\end{definition}
In the finite Morley rank context, isogenies and isogenic embeddings will usually be definable.

\begin{remark}\label{isogenies}
Let $H$ be isogenically embeddable into $G$.
\begin{enumerate}[(i)]
\item If $G$ is of finite exponent, then $H$ is of finite exponent.
\item If $G$ has finite torsion, then $H$ has finite torsion.
\end{enumerate}
\end{remark}

\begin{remark}
Let $G$ and $H$ be definably isogenous. Then $\RM(G)=\RM(H)$; moreover $G$ is minimal if and only if $H$ is minimal.
\end{remark}

The following result partially prohibits the existence of an infinite group of finite Morley rank isogenically embeddable in both the additive and multiplicative group of definable fields of finite Morley rank. Even if it does not cover all possible cases, it is enough for our purposes.

\begin{lemma}\label{Kequi}
Let $K$ and $F$ be fields of finite Morley rank. Then:
\begin{enumerate}[(i)]
\item $K^{\times}$ cannot be isogenically embedded into $F_+$.
\item No infinite group of finite exponent is isogenous to a definable section of $K^{\times}$.
\item No infinite definable subgroups of $K^{\times}$ and of $F_+$ can be isogenous, provided that either $K$ or $F$ has positive characteristic.
\item No infinite definable subgroups of $K_+$ and of $K^{\times}$ can be definably isogenous.
\end{enumerate}
Note that the isogenies in (i)--(iii) need not be definable.
\end{lemma}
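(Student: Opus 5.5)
Throughout, recall that a field $K$ of finite Morley rank is algebraically closed. Hence $K^\times$ is connected and divisible, and its torsion subgroup is the group of roots of unity: for each prime $p\ne\operatorname{char}K$ this contains a Prüfer $p$-group $\mathbb Z(p^\infty)$, and for $p=\operatorname{char}K$ there is no $p$-torsion at all. On the other side, $F_+$ is a $\mathbb Q$-vector space if $\operatorname{char}F=0$, and has exponent $p$ if $\operatorname{char}F=p>0$. Each item will be reduced to comparing torsion in the two groups, using that isogenies only modify groups by finite normal subgroups (Remark \ref{isogenies}). Definability is needed only in (iv), where the isogeny must be definable; this is where rank arguments enter.

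For (i), suppose $K^\times$ is isogenous to some $H\le F_+$, so $K^\times/K_0\cong H/H_0$ with $K_0,H_0$ finite.
\begin{itemize}
\item If $\operatorname{char}F=p>0$, then $H/H_0$ has exponent $p$. But $K^\times/K_0$ is divisible and infinite, and an infinite divisible group cannot have finite exponent.
\item If $\operatorname{char}F=0$, then $H$ is torsion-free. Since $H_0$ is a finite subgroup of $H$, it is trivial, so $H/H_0=H$ is torsion-free. However $K^\times/K_0$ has nontrivial torsion: choose a prime $q\ne\operatorname{char}K$; the image of the Prüfer $q$-group under division by the finite $K_0$ is still an infinite $q$-torsion group.
\end{itemize}

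For (ii), let $S=X/Y$ with $Y\le X\le K^\times$ definable, and suppose $G$ is infinite of finite exponent $n$ and $G\isog S$. Then $S$ is infinite with a finite normal subgroup of finite-exponent quotient, so $S$ itself has finite exponent, say $m$. Then $X^m\le Y$. The map $x\mapsto x^m$ has finite kernel in $K^\times$ (at most $m$ roots of unity), so $X^m$ has finite index in $X$, since $\RM(X^m)=\RM(X)$. Hence $X/Y$ is finite, a contradiction.

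For (iii), let $A\le K^\times$ and $C\le F_+$ be infinite definable subgroups with $A\isog C$.
\begin{itemize}
\item If $\operatorname{char}F=p$, then $C$ is infinite of exponent $p$. By (ii), applied to the trivial section $A/\{1\}$, this is impossible.
\item If $\operatorname{char}F=0$ and $\operatorname{char}K=p$, then $C$ is torsion-free and divisible, so the image of $C$ modulo a finite subgroup is trivial modification. Thus $A/A_0$ is torsion-free for some finite $A_0$, so the torsion of $A$ is finite. Now $A^0$ is an infinite connected definable subgroup of $K^\times$ in characteristic $p$. It is divisible, being of finite index in a connected group quotient, and it is a group of finite Morley rank with infinite torsion: its divisible hull meets some Prüfer $q$-subgroup infinitely. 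To justify this I would use Wagner's result that in positive characteristic $K^\times$ is a torsion-plus-good-torus structure: every infinite definable subgroup of $K^\times$ contains infinitely many roots of unity.
\end{itemize}
This last point is the main obstacle. If that citation is unavailable, I would argue via Zilber linearisation: a connected divisible abelian group in characteristic $p$ definable in $K$ whose torsion is finite would give, after quotienting, a torsion-free definable subgroup. That should be excluded by the known fact that $K^\times$ has no definable torsion-free subgroup in positive characteristic.

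For (iv), suppose $f$ is a definable isogeny between infinite definable $A\le K_+$ and $M\le K^\times$, after passing to connected components. In characteristic $p$, this is already covered by (iii). In characteristic $0$, $A$ is a divisible torsion-free definable additive subgroup, hence a $\mathbb Q$-subspace. The group $M$ is connected, so divisible, and its torsion must be finite, as in (i). Consider the definable subgroup $\operatorname{Stab}(A)=\{k\in K:kA\subseteq A\}$. It is a definable subfield-like subring, hence a field by finite rank, hence all of $K$ or finite; I would use this rigidity to show $A=K_+$. Then $\RM(M)=\RM(K)$ forces $M=K^\times$, which has infinite torsion, a contradiction.
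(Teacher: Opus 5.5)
Your proposal is correct and follows essentially the paper's route: (i)--(iii) use the same torsion/exponent comparison, the same finite-kernel rank argument for $x\mapsto x^m$, and the same appeal to Wagner's theorem that in positive characteristic every infinite definable subgroup of $K^\times$ has infinite torsion (the paper cites it as Fact 2.30 of Altinel et al.). In (iv) you split by characteristic rather than by rank, and in characteristic $0$ you reprove inline the fact the paper cites as Poizat's Corollary 3.3; to finish that step, say explicitly that $\{k\in K: kA\subseteq A\}$ contains $\mathbb Z$, so it is infinite and hence equal to $K$, which makes the nonzero $K$-subspace $A$ all of $K_+$.
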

\begin{proof} As both $K$ and $F$ are algebraically closed, $K^\times$ has infinite exponent and infinite torsion, whereas $F_+$ has either finite exponent or no torsion.\begin{enumerate}[(i)]
\item This follows from Remark \ref{isogenies}.
\item Let $G$ be an infinite group of finite exponent $n$ isogenous to a definable section $D/L$ of $K^\times$. Then $D/L$ has finite exponent, say $\ell$, and the homomorphism $x\mapsto x^\ell$ from $D$ to $L$ has infinite kernel since $\RM(D)>\RM(L)$. But $x^\ell=1$ has at most $\ell$ solutions in $K$, a contradiction.
\item Suppose otherwise, and let $G$ be an infinite definable subgroup of $K^{\times}$ isogenous to a definable subgroup of $F_+$. If $F$ has positive characteristic, $G$ has finite exponent and we are done by (ii). If $K$ has positive characteristic, then \cite[Fact~2.30]{ALTINEL2004} implies that $G$ has infinite torsion and infinite exponent, and so does $F_+$ by Remark \ref{isogenies}, a contradiction.
\item Suppose otherwise. Then there is an infinite definable subgroup $G$ of $K_+$ definably isogenous to a definable subgroup $H$ of $K^\times$. If $\RM(G)=\RM(H)=\RM(K)$, then $K_+\isog K^{\times}$, contradicting (i). Hence $\RM(G)<\RM(K)$, so $K$ has a proper infinite definable additive subgroup. By \cite[Corollary 3.3]{poizat} the characteristic of $K$ is positive, so $G$ has finite exponent, contradicting (ii).
\qedhere\end{enumerate}\end{proof}

\section{Left chief series}\label{sec:3}
A fundamental tool to study the structure of binilpotent and bisoluble skew braces $B$ is the {\em left chief series}. We start by introducing the concept of $\Sigma$-composition series for a connected $\Sigma$-group $G$. 

\begin{definition} Let $G$ be a connected group, and $\Sigma$ a group of automorphisms of $G$. Then $G$ is said to be a {\em $\Sigma$-group}. A definable subgroup $H$ of $G$ such that $\sigma(H)=H$ for all $\sigma\in\Sigma$ is a {\em $\Sigma$-subgroup} of $G$. We shall call $G$ {\em $\Sigma$-simple} if it is infinite and has no infinite proper definable normal $\Sigma$-subgroup.
	
A {\em $\Sigma$-composition series} of $G$ is a finite series of connected $\Sigma$-subgroups
$$G=G_0\ge G_1\ge\cdots\ge G_n=\{1\}$$
such that $G_{i+1}$ is normal in $G_i$ and $G_i/G_{i+1}$ is $\Sigma$-simple for all $i<n$.

As usual, the quotients $G_i/G_{i+1}$ are the {\em factors} of the series, while the subgroups $G_i$ are the {\em terms} of the series. Two $\Sigma$-composition series $(G_i:i\leq n)$ and $(H_i:i\leq m)$ are {\em equivalent} if $n=m$ and there exists a permutation $s\in\operatorname{Sym}(n)$ such that $G_i/G_{i+1}$ is definably isogenous to $H_{s(i)}/H_{s(i)+1}$ for all $i<n$.
\end{definition}
\begin{remark}If $\Sigma$ is trivial, a $\Sigma$-composition series is just a composition series. If $\Sigma$ is the group of inner automorphisms of $G$, a $\Sigma$-composition series is called a {\em chief series}.
	
Note that in the finite Morley rank context, if all automorphisms in $\Sigma$ are definable, then the connected component of a normal $\Sigma$-subgroup is again a normal $\Sigma$-subgroup. Hence the connectivity requirement for a $\Sigma$-composition series follows from $\Sigma$-simplicity of the factors.\end{remark}

\begin{lemma}[Jordan--H\"older Theorem]\label{Jordan-Holder}
Let $G$ be a connected $\Sigma$-group of finite Morley rank. Then every subnormal series of connected $\Sigma$-subgroups of $G$ can be refined to a $\Sigma$-composition series, and any two $\Sigma$-composition series of $G$ are equivalent.
\end{lemma}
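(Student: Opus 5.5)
The plan is to follow the classical Jordan--H\"older argument, with finiteness arguments replaced by Morley rank and isomorphisms replaced by definable isogenies.

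\emph{Existence of refinements.} Given a subnormal series of connected $\Sigma$-subgroups $G=H_0\ge H_1\ge\cdots\ge H_k=\{1\}$, I refine each step separately. Take a factor $H_i/H_{i+1}$ that is not $\Sigma$-simple. Then there is an infinite proper definable $\Sigma$-subgroup $N$ with $H_{i+1}\le N\unlhd H_i$, and $N$ is normal in $H_i$. Replacing $N$ by $N^0H_{i+1}$ keeps it a $\Sigma$-subgroup, normal in $H_i$: $N^0$ is characteristic in $N$, so it is normalised by $H_i$ and invariant under the definable automorphisms in $\Sigma$. The subgroup $N^0H_{i+1}$ is connected, being a product of two connected groups, one of them normal. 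Since $N/H_{i+1}$ is infinite, $N^0\not\le H_{i+1}$, so $\RM(H_{i+1})<\RM(N^0H_{i+1})$. Since $H_i$ is connected and $N$ is proper, $N$ has infinite index, so $\RM(N^0H_{i+1})<\RM(H_i)$. Each insertion therefore strictly increases the number of terms, and the Morley ranks along the series strictly decrease. The length is thus bounded by $\RM(G)$, so the process terminates in a $\Sigma$-composition series. As the Remark notes, only factor $\Sigma$-simplicity needs checking, provided $\Sigma$ consists of definable automorphisms. I will assume this implicitly, as the Convention allows.

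\emph{Uniqueness.} I use Schreier's method, or equivalently induction on $\RM(G)$ in the style of the classical proof. Let $(G_i)$ and $(H_j)$ be two $\Sigma$-composition series. If $G_1=H_1$, apply induction to $G_1$. Otherwise consider $G_1H_1$, which is a normal $\Sigma$-subgroup of $G$ strictly containing $G_1$ (or containing it with infinite index up to connectivity). By $\Sigma$-simplicity of $G/G_1$, the quotient $G_1H_1/G_1$ has finite index in $G/G_1$, and connectedness then gives $G_1H_1=G$. Set $D=(G_1\cap H_1)^0$, a connected normal $\Sigma$-subgroup of both $G_1$ and $H_1$. The key computation is
\[
G_1/D \to G_1/(G_1\cap H_1)\cong G_1H_1/H_1=G/H_1,
\]
where the first map has finite kernel $(G_1\cap H_1)/D$. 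This is a definable isogeny $G_1/D\isog G/H_1$, and symmetrically $H_1/D\isog G/G_1$. In particular $G_1/D$ is $\Sigma$-simple, since $\Sigma$-simplicity transfers along definable isogenies: infinite definable normal $\Sigma$-subgroups correspond, using preimages and connected components. Refine $D$ to a composition series by the first part. This gives two series of $G_1$, namely $(G_i)_{i\ge1}$ and $G_1\ge D\ge\cdots$, and two series of $H_1$. Induction on rank applies to $G_1$ and to $H_1$, both of rank less than $\RM(G)$, so the respective series are equivalent. Chaining the equivalences, with the factors $G/G_1,G_1/D$ swapped with $H_1/D,G/H_1$, yields equivalence of the original series. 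This uses that definable isogeny is an equivalence relation.

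\emph{Main obstacle.} The delicate points are the lengths and the finite sections. The classical proof compares lengths via the intersections, but here $G_1\cap H_1$ may be disconnected. I therefore need to check that passing to $(G_1\cap H_1)^0$ does not lose a factor. It does not: $G_1\cap H_1/D$ is finite, and finite factors are invisible to isogeny. I must also confirm that isogeny factor correspondence is compatible with quotienting by finite normal subgroups. Concretely, for a definable isogeny $\phi:X\to Y$ with finite kernel, the subgroups $(\phi^{-1}(Z))^0$ for $Z\le Y$ give a bijection between infinite connected definable normal $\Sigma$-subgroups. This is where care is needed, and I would verify it first using rank additivity.
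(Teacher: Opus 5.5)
Your proposal is correct and is essentially the paper's proof. Existence comes from the bound $\RM(G)$ on the length of any series of connected $\Sigma$-subgroups. Uniqueness is by induction on $\RM(G)$ through $D=(G_1\cap H_1)^0$, using the definable isogenies $G_1/D\isog G/H_1$ and $H_1/D\isog G/G_1$. The paper packages this as $\mathcal G\sim\mathcal G'\sim\mathcal H'\sim\mathcal H$ via common normal terms, which amounts to your direct induction on $G_1$ and $H_1$.

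The one step you should state explicitly is why $G_1\neq H_1$ forces $G_1H_1=G$. If $H_1\le G_1$, then $G_1/H_1$ is a proper definable normal $\Sigma$-subgroup of the $\Sigma$-simple group $G/H_1$, hence finite, so $G_1=H_1$ by connectedness. Otherwise $G_1H_1/G_1\cong H_1/(G_1\cap H_1)$ is infinite by connectedness of $H_1$, and $\Sigma$-simplicity of $G/G_1$ gives $G_1H_1=G$.
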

\begin{proof} Any subnormal series of connected $\Sigma$-subgroups of $G$ has its length bounded by $\RM(G)$, so a maximal refinement of the given series exists. By maximality any quotient is $\Sigma$-simple, so it is a $\Sigma$-composition series.
	
We show equivalence by induction on $\RM(G)$ and assume the theorem holds for groups of Morley rank strictly less than $\RM(G)$. Suppose two $\Sigma$-composition series of $G$ have a common term $H$ other than $G$ or $\{1\}$ such that $H$ is normal in $G$. Since both $\RM(G/H)$ and $\RM(H)$ are strictly less than $\RM(G)$, the induced $\Sigma$-composition series of $G/H$ must be equivalent, as must those of $H$. But then the original series are equivalent.

Now consider two $\Sigma$-composition series $\mathcal G$ and $\mathcal H$ of $G$ with first terms $G_1$ and $H_1$, respectively, and note that both $G_1$ and $H_1$ are normal in $G$. If $G_1=\{1\}$ then $G$ is $\Sigma$-simple, so $H_1=\{1\}$ and the series are equal. 

So suppose $G_1$ is infinite, as is $H_1$ since $G$ is not $\Sigma$-simple. Then $(G_1\cap H_1)^0$ is a connected $\Sigma$-subgroup normal in $G$ and contained in $G_1$ and in $H_1$. If $(G_1\cap H_1)^0=\{1\}$, then $G/G_1\isog H_1$ and $G/H_1\isog G_1$ definably, so we are done. Otherwise consider two quasi Jordan-H\"older series $\mathcal G'$ passing through $G_1$ and $(G_1\cap H_1)^0$, and $\mathcal H'$ passing through $H_1$ and $(G_1\cap H_1)^0$. By the second paragraph, $\mathcal G$ is equivalent to $\mathcal G'$, which is equivalent to $\mathcal H'$, which is equivalent to $\mathcal H$. It follows that $\mathcal G$ and $\mathcal H$ are equivalent.
\end{proof}

Now, we go back to the skew brace context.

\begin{definition}\label{d:lcl}
Let $B$ be a skew brace of finite Morley rank and $L$ a connected strong left ideal of $B$. Then $L_+$ can be seen as a $\B$-group, and a {\em $B$-left chief series} of $L$ is just a $\B$-composition series $(L_i:i\leq n)$ of the $\B$-group $L$. In other words, $(L_i:i\leq n)$ is a chain of connected strong left ideals of $B$ such that $L_0=L$, $L_n=\{0\}$, and
$L_i/L_{i+1}$ is a $\B$-simple additive subgroup of $B/L_{i+1}$ for $i<n$.

The (additive) factors $L_i/L_{i+1}$ are the {\em $B$-left chief factors} of $L$. The {\em $B$-left chief length} of $L$ is the length of a $B$-left chief series of $L$, which is well defined by Lemma \ref{Jordan-Holder}. A $B$-left chief series is {\em subideal} if $L_{i+1}$ is an ideal of $L_i$ for every $i<n$.
\end{definition} 
When $B=L$, we omit the $B$ and just write {\em left chief series} and {\em left chief factors}. Note that {\em a priori} left chief factors admit a $\lambda$-action, but have no multiplicative structure. In particular we shall consider them as additive $\B$-groups (unless indicated otherwise).

Note that in the brace context, as $\B$ contains additive conjugation, $\B$-simplicity is the same as $\B$-minimality (i.e.\ no infinite proper $\Sigma$-subgroup).

Having left chief length $1$ is equivalent to saying that $B$ has no definable infinite strong left ideal of infinite index.

We now determine the properties of $B$-left factors in the bisoluble, binilpotent and left nilpotent context. We start with the bisoluble case.

\begin{lemma}\label{Minimality}
Let $B$ be a connected bisoluble skew brace of finite Morley rank, and $L/M$ a left chief factor of $B$. Then $L/M$ is abelian, and $\B'$ acts trivially on it. Moreover, $L/M$ is either minimal or of prime exponent; if $A<B$ is an infinite connected skew sub-brace and $M<I<L$ is an $\mathfrak A$-invariant additive connected subgroup, then $\mathfrak A$ acts trivially on $L/M$.
\end{lemma}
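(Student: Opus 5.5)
The plan is to work throughout with the connected group $\B=B_+\rtimes_\lambda B_\circ$. It is soluble because both $B_+$ and $B_\circ$ are soluble, and connected because $B$ is connected. By definition, $L/M$ is a connected $\B$-minimal section of $B_+$, and $\B$ acts on it definably.

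\emph{Abelianity.} The derived group $(L/M)'$ is definable and connected by Zilber indecomposability (the consequence of \cite[Theorem 3.13]{poizat} recalled above). It is also characteristic in $L/M$, so its preimage is a connected strong left ideal between $M$ and $L$. By $\B$-simplicity, $(L/M)'$ is therefore either trivial or all of $L/M$. It cannot be all of $L/M$, since then $L/M$ would be a nontrivial perfect section of the soluble group $B_+$. Hence $L/M$ is abelian. Now $\B$ is connected soluble and acts irreducibly on the connected abelian group $L/M$, so Theorem \ref{TriActG'} shows that $\B'$ acts trivially.

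\emph{Minimal or prime exponent.} Let $C$ be the kernel of the action of $\B$ on $L/M$.
\begin{itemize}
\item If $C=\B$, then every definable subgroup of $L/M$ is $\B$-invariant. By $\B$-minimality, $L/M$ has no infinite proper definable subgroup, so it is minimal.
\item Otherwise $\B/C$ is abelian and acts faithfully and irreducibly, and Theorem \ref{ZilberLin} gives a field $K$ of finite Morley rank with $L/M\cong K_+$ and $\B/C\hookrightarrow K^\times$. If $\operatorname{char}K=p>0$, then $L/M$ has exponent $p$. If $\operatorname{char}K=0$, then $K_+$ has no proper infinite definable subgroup by \cite[Corollary 3.3]{poizat}, so $L/M$ is minimal.
\end{itemize}

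\emph{Last assertion.} Let $A<B$ be an infinite connected skew sub-brace, so that $\mathfrak A\le\B$ is connected, and let $I/M$ be a proper infinite connected $\mathfrak A$-invariant subgroup of $L/M$. If the action of $\B$ is trivial there is nothing to prove, so assume we are in the linearised situation. The image $T$ of $\mathfrak A$ in $K^\times$ is connected, so it is either trivial or infinite. Suppose it is infinite.
\begin{enumerate}
\item By Zilber's field theorem, as used in the proof of Theorem \ref{ZilberLin}, the subring of $K$ generated by $T$ is a definable subfield $F$ with $T\le F^\times$. In particular $F$ is infinite.
\item Since $I/M$ is $T$-invariant and additively closed, it is an $F$-subspace of $K$.
\item Then $\RM(F)\le\RM(I/M)<\RM(K)$, so $F$ is a proper infinite definable subfield of $K$.
\item This is impossible, because an algebraically closed field of finite Morley rank has no infinite definable proper subfield \cite{poizat}.
\end{enumerate}
Hence $T$ is trivial, i.e.\ $\mathfrak A$ acts trivially on $L/M$. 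In characteristic $0$ this case cannot even arise, because $L/M$ is then minimal and has no such $I$.

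The main obstacle is this last step. It needs the extra field-theoretic input (Zilber's field theorem together with the absence of proper infinite definable subfields), and I have to check that $T$ is really connected and infinite. I also have to confirm that only the connectedness of $A$ is used, and not the fact that $A$ is a sub-brace.
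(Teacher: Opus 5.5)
Your proposal is correct and follows the paper's proof essentially step for step: abelianity via $\B$-simplicity and solubility of $B_+$, Theorem \ref{TriActG'} for $\B'$, the linearisation dichotomy for ``minimal or prime exponent'', and a proper infinite definable subfield of $K$ for the last assertion. The one small difference is that the paper uses the stabiliser ring $\{x\in K: xH\le H\}$ of $H\cong I/M$, which is definable outright and contains the image of $\mathfrak A$. That choice avoids your appeal to Zilber's field theorem, whose irreducibility hypothesis fails for $T$ acting on $K_+$ anyway; the definability of the ring generated by $T$ really comes from indecomposability of the orbit $T\cdot 1$.
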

\begin{proof}
$L/M$ is soluble, so $(L/M)'=(L_+'+M)/M<L/M$. Moreover $L'_+$ is a definable connected strong left ideal of $B$. By $\B$-minimality of the left chief factor $L_+'\le M$, so $L/M$ is abelian.

Note that $\B$ is soluble by bisolubility of $B$. The action of $\B$ on $L/M$ is irreducible, so $\B'$ acts trivially by Theorem \ref{TriActG'}. Moreover, if the action of $\B$ is non-trivial, then $L/M$ is isomorphic to the additive group of an algebraically closed field $K$ by Theorem \ref{ZilberLin}, so either $L/M$ is of prime exponent, or $\operatorname{char}(K)=0$ and $L/M$ is minimal. If the $\B$-action is trivial, any subgroup of $L/M$ is $\B$-invariant, whence either finite or equal to $L/M$ by $\B$-minimality. Thus $L/M$ is minimal.

For the moreover statement, note that since $I/M$ is a proper infinite subgroup of $L/M$, we cannot be in the minimal case, so the action of $\B$ on $L/M$ induces a field structure $K$ of positive characteristic.

Suppose now that the action of $\A$ on $L/M$ is not trivial, so $C_\B(L/M)\not\ge\A$. Then $I/M< L/M$ corresponds to an infinite subgroup $H< K_+$ invariant under the subgroup of $K^\times$ corresponding to $\A C_\B(L/M)/C_\B(L/M)\le \B/C_\B(L/M)$; note that this quotient is infinite by connectedness of $A$, whence of $\A$. It follows that $\{x\in K:xH\le H\}$ is an infinite proper definable subring (and in fact subfield by stability) of $K$, contradicting finiteness of Morley rank, as a proper pair of algebraically closed fields has Morley rank at least $\omega$.
Therefore the action of $\mathfrak A$ on $L/M$ must be trivial.
\end{proof}

\begin{lemma}\label{l:finmin=minfin}
Let $B$ be a connected bisoluble skew brace of finite Morley rank, and $L_1/L_2$ and $L_2/L_3$ two consecutive left chief factors. If $L_1/L_2$ has finite exponent and $L_2/L_3$ is divisible, then $L_1/L_3$ is abelian and there is an infinite connected strong $B$-left ideal $L_1>F>L_3$ such that $F/L_3$ has finite exponent and $L_1/F$ is divisible.\end{lemma}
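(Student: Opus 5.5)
The plan is to work in the additive section $G:=L_1/L_3$ with its normal subgroup $H:=L_2/L_3$. First I show $G$ is abelian by Lemma \ref{lemnotbothfe}(ii). Then I read off $F$ from the decomposition of Theorem \ref{t:nesin}.

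\emph{Setup.} By Lemma \ref{Minimality}, both factors $L_1/L_2$ and $L_2/L_3$ are abelian, and each is either minimal or of prime exponent. Since $L_2/L_3$ is divisible and infinite, it is not of finite exponent, so it is minimal. Both factors are connected, so $G$ is connected. The factor $G/H\cong L_1/L_2$ is connected abelian of finite exponent. Since $H$ is minimal, it has no infinite proper definable subgroup at all, in particular no $G$-normal one. Moreover $H$ is divisible and infinite, so it does not have finite exponent.

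\emph{Abelianity.} Lemma \ref{lemnotbothfe}(ii) then applies and gives that $G$ is abelian. The point to check carefully is that its hypothesis "no definable proper $G$-normal subgroup" is meant as "no infinite one", which is what minimality of $H$ provides. I expect this bookkeeping of the hypotheses to be the only delicate step. If needed, it can be redone directly: $G/H$ acts by conjugation on the minimal group $H$. Once this action is shown to be trivial, we have $H\le Z(G)$. Then for $g\in G$ the map $x+H\mapsto[g,x]$ is a definable homomorphism $G/H\to H$. Its image is killed by the exponent $n$ of $G/H$, so it is a definable subgroup of $H$ of finite exponent, hence finite, hence trivial by connectedness of $G/H$.

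\emph{Decomposition.} Apply Theorem \ref{t:nesin} to the connected abelian group $G$. This gives $G=D+F$ with $D$ divisible and $F$ of finite exponent, both connected, definable and characteristic, and $D\cap F$ finite.
\begin{itemize}
\item $D\le H$. The image of $D$ in $G/H$ is divisible of finite exponent, hence trivial.
\item $H\cap F$ is finite. It is a definable subgroup of finite exponent of the minimal divisible group $H$, so it is finite.
\item $D$ is infinite. Since $H\cap F$ is finite, $H$ embeds isogenically into $G/F\cong D/(D\cap F)$, and $H$ is infinite.
\item $D=H$. We have $D\le H$ with $D$ infinite and connected, and $H$ is minimal.
\end{itemize}
Consequently $G=H+F$. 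Since $G/H$ is infinite, $F$ is infinite. Since $H\not\le F$, $F$ is proper.

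\emph{The ideal.} Let $\tilde F$ be the preimage of $F$ in $L_1$. Every element of $\B$ acts on $L_1/L_3$ by an automorphism, combining $\lambda$ with additive conjugation. Since $F$ is characteristic in $G$, it is $\B$-invariant. By Lemma \ref{NormalG}, $\tilde F$ is then a strong left ideal, and it is connected because $F$ and $L_3$ are. We have $L_1>\tilde F>L_3$, and $\tilde F$ is infinite. The factor $\tilde F/L_3\cong F$ has finite exponent. Finally, $L_1/\tilde F\cong G/F\cong H/(H\cap F)$ is a quotient of a divisible group, hence divisible.
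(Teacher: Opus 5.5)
Your proof is correct and follows the paper's route: abelianity of $L_1/L_3$ comes from Lemma \ref{lemnotbothfe}(ii), and the decomposition of Theorem \ref{t:nesin} then has its divisible part equal to $L_2/L_3$ and its finite-exponent part giving the intermediate strong left ideal. The only difference is cosmetic: the paper names the pieces explicitly as $L_2=\im(\phi_n)$ and $F=\ker(\phi_n)^0$ for $\phi_n\colon x\mapsto nx$ and gets $D+F=L_1$ by a rank count, whereas you identify $D=H$ using the minimality of $H$.
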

\begin{proof}
For ease of notation we assume $L_3=\{0\}$. Now
$L_1$ is abelian by Lemma \ref{lemnotbothfe}, so $L_1=D+F$ by Theorem \ref{t:nesin}, where both $D$ and $F$ are connected strong $B$-left ideals, $D$ is divisible and $F$ has finite exponent. In fact, if $n$ is the exponent of $L_1/L_2$, we consider the endomorphism $\phi_n:x\mapsto n\cdot x$ of $L_1$. Then $D=\im(\phi_n)=L_2$ by divisibility and our choice of $n$, and $F=\ker(\phi_n)^0$ is a connected strong left ideal. Moreover $D\cap F$ is finite, and $$\RM(D+F)=\RM(D)+\RM(F)=\RM(\im(\phi_n))+\RM(\ker(\phi_n))=\RM(L_1).$$
Then $D+F=L_1$ by connectedness, and $L_1>F>L_3$. Clearly $L_1/F\cong D/(D\cap F)$ is divisible.\end{proof}

\begin{corollary}\label{c:bottom}
Let $B$ be a connected bisoluble skew brace of finite Morley rank. Then there is a left chief series $(L_i:i\le n)$ and $\ell\le n$ such that $L_i/L_{i+1}$ is divisible for $i<\ell$, and has finite exponent for $i\ge\ell$.
\end{corollary}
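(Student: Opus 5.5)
We argue by a bubble-sort style exchange on left chief series, using Lemma~\ref{l:finmin=minfin} as the swapping step. By Lemma~\ref{Jordan-Holder}, left chief series exist, and by Lemma~\ref{Minimality} each factor is abelian and either minimal or of prime exponent. A minimal connected abelian group of finite Morley rank is either divisible or of prime exponent, since for each prime $p$ the subgroup $pA$ is definable and connected, hence equal to $A$ or trivial. So every left chief factor is either divisible or of finite (prime) exponent.

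To a left chief series $(L_i:i\le n)$ attach the number of inversions, that is, the number of pairs $i<j$ with $L_i/L_{i+1}$ of finite exponent and $L_j/L_{j+1}$ divisible. Choose a left chief series minimising this number; we claim it is zero, which gives the required $\ell$. Suppose not. Then some adjacent pair $L_i/L_{i+1}$ of finite exponent and $L_{i+1}/L_{i+2}$ divisible exists. Apply Lemma~\ref{l:finmin=minfin} to $L_i\ge L_{i+1}\ge L_{i+2}$: it yields a connected strong left ideal $F$ with $L_i>F>L_{i+2}$, where $F/L_{i+2}$ has finite exponent and $L_i/F$ is divisible. Replace $L_{i+1}$ by $F$.

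It remains to check that the new chain is still a left chief series, i.e.\ that $L_i/F$ and $F/L_{i+2}$ are $\B$-simple. This is the only point needing care. By Lemma~\ref{Jordan-Holder}, the new series $L_0\ge\dots\ge L_i\ge F\ge L_{i+2}\ge\dots$ refines to a left chief series. Since the length is an invariant, that refinement has length $n$, so no refinement actually occurs: the segment $L_i>F>L_{i+2}$ already contributes two factors and the total length is $n$, so each factor is $\B$-simple.

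The divisible factor now precedes the finite-exponent one at positions $i,i+1$, and all other factors are unchanged, so the inversion count drops by exactly one. This contradicts minimality. Hence the minimal series has no inversions, and we take $\ell$ to be the number of divisible factors.

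The main subtle point is the $\B$-simplicity of the swapped factors, which is handled by the length argument above rather than by any direct computation.
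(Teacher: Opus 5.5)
Your proof is correct and follows the paper's argument: the paper also repeatedly swaps an adjacent finite-exponent/divisible pair of factors using Lemma \ref{l:finmin=minfin} and iterates. Your version spells out three points the paper leaves implicit: that every factor is divisible or of finite exponent, that the process terminates (via the inversion count), and that the swapped chain is still a left chief series (via invariance of the length in Lemma \ref{Jordan-Holder}).
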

\begin{proof}
By Lemma \ref{l:finmin=minfin}, if a divisible left chief factor is just below one of finite exponent, we can exchange them. Iterating this process will yield a left chief series as required.\end{proof}

\begin{lemma}\label{cosesopraleft}
Let $B$ be a connected bisoluble skew brace of finite Morley rank. If $L$ is a maximal proper connected strong left ideal in $B$, then $C\circ L$ is a strong left ideal in $B$ for every normal multiplicative subgroup $C\le B_\circ'$. Therefore, if $C$ is connected, either $C\circ L=B$ or $C\subseteq L$.
\end{lemma}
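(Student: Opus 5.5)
The plan is to pass to the quotient $B/L$. There $C$ acts trivially, so $C\circ L$ coincides with the additive set $C+L$. This set is then checked directly to be a subgroup that is additively normal and $\lambda$-invariant.

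First I identify $B/L$ as a left chief factor. Since $B$ is connected and $L$ is a maximal proper connected strong left ideal, any definable infinite $\B$-invariant proper subgroup of $B/L$ would have a connected component whose preimage is a connected strong left ideal strictly between $L$ and $B$. So $B/L$ is $\B$-minimal, i.e.\ a left chief factor $B/L$ (with $L$ of infinite index; if $L$ had finite index, connectivity gives $L=B$, excluded). By Lemma \ref{Minimality}, $B/L$ is additively abelian and $\B'$ acts trivially on it. For $c_1,c_2\in B$ the group commutator of $(0,c_1),(0,c_2)$ in $\B$ is $(0,[c_1,c_2]_\circ)$, so $\{0\}\times B_\circ'\le\B'$. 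Hence for $c\in C\le B'_\circ$ the map $\lambda_c$ induces the identity on $B/L$, that is $c\ast B\subseteq L$.

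Next I show $C\circ L=C+L$. For $c\in C$ and $l\in L$ we have $c\circ l=c+\lambda_c(l)\in c+L$ by $\lambda$-invariance of $L$, and conversely $c+l=c\circ\lambda_c^{-1}(l)$. Since $C$ is normal in $B_\circ$, $C\circ L$ is a multiplicative subgroup. It then suffices to check the following for $C+L$.
\begin{enumerate}[(i)]
\item Additive closure: $c+c'+L=c\circ c'+L$, because $c\ast c'\in L$. Inverses: $0=c\circ c^{-1}=c+\lambda_c(c^{-1})\in c+c^{-1}+L$, so $-c\in c^{-1}+L$.
\item Additive normality: this is automatic, since $B/L$ is abelian and $C+L\supseteq L$.
\item $\lambda$-invariance: for $b\in B$ put $c''=b\circ c\circ b^{-1}\in C$. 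Then
$$\lambda_b(c)=-b+c''\circ b=-b+c''+\lambda_{c''}(b)\in -b+c''+b+L=c''+L,$$
again using triviality of $\lambda_{c''}$ on $B/L$ and additive commutativity of $B/L$. As $\lambda_b(L)=L$, this gives $\lambda_b(C+L)\subseteq C+L$.
\end{enumerate}
So $C\circ L$ is a strong left ideal.

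Finally, if $C$ is connected, then $C+L$ is the image of the connected group $C\times L$ under a definable map, hence definable and connected (it is a definable subgroup covered by a connected set of full rank). Being a connected strong left ideal containing $L$, maximality forces $C\circ L=L$, i.e.\ $C\subseteq L$, or $C\circ L=B$.

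The main obstacle is the first step: verifying that $\B$-minimality of $B/L$ really follows from maximality among \emph{connected} strong left ideals. I also need to confirm that $\{0\}\times B'_\circ$ sits inside $\B'$, so that Lemma \ref{Minimality} yields $C\ast B\subseteq L$. Everything after that is routine computation.
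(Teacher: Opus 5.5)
Your proof is correct and follows essentially the paper's argument: the trivial action of $B'_\circ$ on the abelian chief factor $B/L$ (Lemma \ref{Minimality}) gives $\lambda_b(c)\in b\circ c\circ b^{-1}+L$, hence $\lambda$-invariance of $C+L=C\circ L$, and additive normality comes from commutativity of $B/L$. The differences are cosmetic: you check additive closure by hand where the paper uses the fact that a $\lambda$-invariant multiplicative subgroup is an additive subgroup, and you spell out details the paper leaves implicit, namely $\B$-minimality of $B/L$, the inclusion $\{0\}\times B'_\circ\le\B'$, and connectedness of $C\circ L$.
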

\begin{proof}
$B/L$ is abelian and the $\lambda$-action of $B'_\circ$ on $B/L$ is trivial by Lemma \ref{Minimality}. Let $b\in B$, $c\in C$ and $\ell\in L$. Then $\ell':=\lambda_b(\ell)\in L$, so
$$\lambda_b(c+\ell)=\lambda_b(c)+ \ell'=-b+b\circ c+\ell'=-b+b\circ c\circ b^{-1}\circ b+\ell'.$$
But $b\circ c\circ b^{-1}\in C\le B'_{\circ}$ and $B'_{\circ}\ast B\subseteq L$, so $$\lambda_b(c+\ell)=-b+b\circ c\circ b^{-1}+b+\ell''=b\circ c\circ b^{-1}+\ell'''\in C+L$$ for some $\ell'',\ell'''\in L$. Thus, $C+L$ is $\lambda$-invariant; since $C+L=C\circ L$ is a multiplicative subgroup of $B$, it is also an additive subgroup of $B$, which must be normal as $B/L$ is abelian. Thus, $C+L$ is a strong left ideal of $B$.
\end{proof}

\begin{corollary}\label{SolBiNil}
Let $B$ be a connected skew brace of finite Morley rank such that $B_+$ is soluble and $B_\circ$ is nilpotent. If $L$ is a maximal connected strong left ideal of $B$, then $L$ is an ideal of $B$.
\end{corollary}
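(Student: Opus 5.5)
The plan is to apply Lemma \ref{cosesopraleft} with a suitable normal multiplicative subgroup $C$. The hypotheses give $B_+$ soluble and $B_\circ$ nilpotent, hence soluble, so $B$ is bisoluble and the earlier results apply. Here $L$ is a maximal proper connected strong left ideal; if $L=B$ there is nothing to prove, so assume $L<B$. By Lemma \ref{lemuseful} it suffices to show that $L$ is multiplicatively normal in $B$, since $L$ is already additively normal and $\lambda$-invariant.

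The first step is to consider $N=N^\circ_B(L)$, the multiplicative normaliser of $L$. It is definable and contains $L$, because $L$ is a multiplicative subgroup.

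\emph{Case 1: $L$ has infinite index in $B_\circ$.} By the Normaliser Condition (Lemma \ref{l:NC}) applied to the nilpotent group $B_\circ$, the index $|N:L|$ is infinite, so $N^0\circ L$ properly contains $L$ up to finite index.

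\emph{Case 2: $L$ has finite index.} Then $L=B$ by connectedness, which is excluded.

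The next step is to show that $N^0\circ L$ is a strong left ideal, so that maximality forces $N^0\circ L=B$ and hence $N=B$. This is where Lemma \ref{cosesopraleft} should enter, but it needs a subgroup $C$ that is normal in $B_\circ$ and contained in $B'_\circ$. So I would instead work with suitable terms of the lower central series of $B_\circ$.

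Take $m$ maximal such that $C:=\gamma_m(B_\circ)\not\subseteq L$, where $\gamma_m(B_\circ)$ is connected and definable by Zilber indecomposability. If $m\ge 2$, then $C\le B'_\circ$ is a connected normal multiplicative subgroup, and Lemma \ref{cosesopraleft} gives $C\circ L=B$. Since $[C,B_\circ]_\circ=\gamma_{m+1}(B_\circ)\subseteq L$, the subgroup $C$ normalises $L$ modulo commutators lying in $L$. Concretely, for $c\in C$ and $\ell\in L$ we have $c\circ\ell\circ c^{-1}=[c,\ell]_\circ\circ\ell\in L$. Thus $C\le N$, and $L$ trivially normalises itself, so $N\supseteq C\circ L=B$ and $L$ is multiplicatively normal.

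The remaining case is $m=1$, that is, $B'_\circ\subseteq L$. Then $L$ contains the multiplicative derived subgroup, so $L$ is normal in $B_\circ$ automatically, because every subgroup containing the derived subgroup is normal. In either case $L$ is an ideal by Lemma \ref{lemuseful}, using properties (i)--(iii).

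The only point requiring care is the identity $c\circ\ell\circ c^{-1}\in L$ for $c\in\gamma_m(B_\circ)$. It holds because $[c,\ell]\in\gamma_{m+1}(B_\circ)\subseteq L$ by the maximality of $m$; this uses only that $\gamma_{m+1}\subseteq L$, which follows from the choice of $m$ and nilpotency. Note that the Normaliser Condition from the first step is not actually needed, and the argument is essentially this lower central series trick.
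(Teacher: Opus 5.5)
Your proof is correct and is essentially the paper's lower-central-series argument. The paper takes the last term $C$ with $C\circ L=B$ and applies Lemma \ref{cosesopraleft} to $[C,B]_\circ$ to get $[C,B]_\circ\le L$, whereas you take the last term not contained in $L$ and apply the lemma to $C$ itself to get $C\circ L=B$. By the lemma's dichotomy these are the same index, and the paper's indexing also covers your separate case $m=1$. The opening Normaliser Condition detour is, as you note yourself, unnecessary and can be deleted.
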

\begin{proof}
Recall that the multiplicative lower central series is definable and connected. Let $C$ be its last term such that $C\circ L=B$, and put $D=[C,B]_\circ$. Then $D\le B'_\circ$ is connected, as is $D\circ L$. So $D\circ L$ is a connected strong left ideal of $B$ by Lemma \ref{cosesopraleft}, proper in $B$ by our choice of $C$. Hence $D\circ L=L$ and $D\le L$. It follows that $L$ is multiplicatively normalized by $C$, whence by $B=C\circ L$. Thus $L$ is an ideal.
\end{proof}

\begin{lemma}\label{BLL_1}
Let $B$ be a connected skew brace of finite Morley rank, and $L/M$ a left chief factor of $B$.
\begin{enumerate}[(1)]
\item If $B$ is additively nilpotent, then $L/M\leq Z^+(B/M)$.
\item If $B$ is left nilpotent of nilpotent type, the $\lambda$-action of $B$ on $L/M$ is trivial.
\end{enumerate}  
\end{lemma}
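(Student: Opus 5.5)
Both parts rest on the same idea: in a connected group of finite Morley rank, a minimal normal section of a nilpotent group is central, applied to a suitable group acting on $L/M$.

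For (1), pass to $\bar B=B/M$, which is well defined as an additive quotient since $M$ is a strong left ideal, hence additively normal. Then $\bar L=L/M$ is a connected normal subgroup of the connected nilpotent group $\bar B_+$. By Lemma \ref{center}, $\bar L\cap Z(\bar B_+)$ is infinite, so its connected component $Z:=(\bar L\cap Z(\bar B_+))^0$ is infinite and connected. We check that $Z$ is $\B$-invariant. It is invariant under additive conjugation because it is central. It is invariant under every $\lambda_b$ because $\lambda_b$ is an additive automorphism of $\bar B$ preserving $\bar L$; hence it preserves $Z(\bar B_+)$, hence their intersection, and it is definable, so it preserves the connected component. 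The preimage of $Z$ is therefore a connected strong left ideal strictly between $M$ and $L$ or equal to $L$. By $\B$-minimality of the left chief factor it equals $L$, so $L/M\le Z^+(B/M)$.

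For (2), $\B$ is nilpotent by Theorem \ref{t:lnp-Bnp}. Let $\mathfrak M_0$ and $\mathfrak L_0$ be the subgroups attached to $M$ and $L$. By Lemma \ref{NormalG}, $\mathfrak M_0\le\mathfrak L_0$ are normal definable subgroups of $\B$, and $\B$ is connected since $B$ is. The factor $\mathfrak L_0/\mathfrak M_0\cong L/M$ is a $\B$-minimal normal section of the connected nilpotent group $\bar{\B}=\B/\mathfrak M_0$: by Lemma \ref{NormalG} together with the correspondence between strong left ideals and normal subgroups of the form $\mathfrak X_0$, every infinite definable normal subgroup of $\bar{\B}$ contained in $\mathfrak L_0/\mathfrak M_0$ has connected component coming from a connected strong left ideal. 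Apply Lemma \ref{center} in $\bar{\B}$: $(\mathfrak L_0/\mathfrak M_0)\cap Z(\bar{\B})$ is infinite. Its connected component is normal in $\bar{\B}$ and corresponds to a connected strong left ideal between $M$ and $L$, so by minimality it is all of $\mathfrak L_0/\mathfrak M_0$. Hence $\mathfrak L_0/\mathfrak M_0$ is central in $\B/\mathfrak M_0$. Conjugating $(\ell,0)$ by $(0,b)$ gives $(\lambda_b(\ell),0)$, so centrality says $\lambda_b(\ell)-\ell\in M$ for all $b\in B$ and $\ell\in L$. That is, the $\lambda$-action on $L/M$ is trivial.

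The main point to get right is the correspondence used in (2): a connected definable subgroup of $\mathfrak L_0$ containing $\mathfrak M_0$ and normal in $\B$ is exactly $\mathfrak X_0$ for a connected strong left ideal $X$. This follows from the third item of Lemma \ref{NormalG}, since any subgroup of $\mathfrak L_0$ lies in $B_+\times\{0\}$. Everything else is routine.
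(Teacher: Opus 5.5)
Your proof is correct, but it runs the nilpotency argument from the opposite end to the paper's.

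\textbf{The paper's route (lower central series).} Since $B_+/M$ (resp.\ $\B/\mathfrak M_0$) is nilpotent, the commutator $[B/M,L/M]_+$ (resp.\ $[\mathfrak L_0/\mathfrak M_0,\B/\mathfrak M_0]$) is a proper subgroup of $L/M$. By the Zilber Indecomposability Theorem and Remark \ref{r:commutator}, $[B,L]_++M$ (resp.\ $[B,L]+M$) is a definable connected strong left ideal. So $\B$-minimality pushes it down to $M$. This yields $[B,L]\le M$ directly, hence $B\ast L\subseteq M$.

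\textbf{Your route (upper end).} Lemma \ref{center} gives that the centre meets $L/M$ in an infinite subgroup. Its connected component is $\B$-invariant, so $\B$-minimality pushes it up to all of $L/M$. You then read off $\lambda_b(\ell)-\ell\in M$ by conjugating $(\ell,0)$ by $(0,b)$.

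\textbf{What each costs.} The paper needs indecomposability to know that $[B,L]_+$ and $[B,L]$ are definable and connected, so that minimality applies. You only need three things: the centre is characteristic, definable automorphisms preserve connected components, and Lemma \ref{center} holds.

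\textbf{Same conclusion.} The two results are equivalent, since $[B,L]\le M$ is exactly centrality of $\mathfrak L_0/\mathfrak M_0$ in $\B/\mathfrak M_0$. In particular, both versions of (2) also recover (1).

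You correctly flag the one point needing care in (2). The connected normal subgroups of $\B$ lying between $\mathfrak M_0$ and $\mathfrak L_0$ are exactly the $\mathfrak X_0$ for connected strong left ideals $M\le X\le L$. This follows from the third item of Lemma \ref{NormalG}, and it is what makes your minimality step legitimate.
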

\begin{proof}\begin{enumerate}[(1)]
\item By nilpotency of $B/M$ we have $$M/M\le ([B,L]_++M)/M=[B/M,L/M]_+<L/M.$$
Since $[B,L]_++M$ is a definable connected strong left ideal of $B$, it must equal $M$ by $\B$-minimality of $L/M$.
\item $\B$ is nilpotent by Theorem \ref{t:lnp-Bnp} and $\mathfrak M_0$ is a normal subgroup of $\B$. So $\B/\mathfrak M_0$ is nilpotent, and
$$\mathfrak M_0/\mathfrak M_0\le(([B,L]+M)\times\{0\})/\mathfrak M_0=[\mathfrak L_0/\mathfrak M_0,\B/\mathfrak M_0]<\mathfrak L_0/\mathfrak M_0.$$
As $[B,L]+M$ is definable connected strong left ideal in $B$ properly contained in $L$, we have $[B,L]\le M$. In particular $B\ast L\subseteq M$, so the $\lambda$-action on $L/M$ is trivial.\qedhere\end{enumerate}
\end{proof}

\begin{corollary}
Let $B$ be a connected left nilpotent skew brace of nilpotent type and of finite Morley rank. If  $L$ is a connected ideal of $B$ which is also a minimal strong left ideal, then $L\subseteq\ann(B)$.
\end{corollary}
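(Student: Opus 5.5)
The plan is to show that $L$ lies in $Z(B_+)$, in $\ker(\lambda)$ and in $Z(B_\circ)$, since $\ann(B)$ is the intersection of these three. I would first get additive centrality and triviality of the $\lambda$-action on $L$ from Lemma~\ref{BLL_1}. Then I would produce an infinite connected piece of $L$ that is multiplicatively central, show that this piece already lies in the annihilator, and finish with minimality of $L$.

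\emph{Step 1.} Since $L$ is connected and a minimal strong left ideal, $L/\{0\}$ is a left chief factor of $B$. The additive group $B_+$ is nilpotent, so Lemma~\ref{BLL_1}(1) gives $L\le Z^+(B)$. By Lemma~\ref{BLL_1}(2), $\lambda_b(l)=l$ for all $b\in B$ and $l\in L$, that is, $B\ast L=\{0\}$. Combining the two, for every $b\in B$ and $l\in L$ we get
$$b\circ l=b+\lambda_b(l)=b+l=l+b.$$

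\emph{Step 2.} Since $L$ is an ideal, it is a normal subgroup of $B_\circ$. Moreover $B_\circ$ is connected and nilpotent, because $\B$ is nilpotent by Theorem~\ref{t:lnp-Bnp}. Lemma~\ref{center} therefore shows that $L\cap Z(B_\circ)$ is infinite. Put $Z:=(L\cap Z(B_\circ))^0$, an infinite connected definable multiplicative subgroup.

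\emph{Step 3.} Let $z\in Z$ and $b\in B$. By multiplicative centrality and Step~1,
$$z+\lambda_z(b)=z\circ b=b\circ z=b+z=z+b,$$
so $\lambda_z(b)=b$. Hence $Z\subseteq\ker(\lambda)\cap Z^+(B)\cap Z(B_\circ)=\ann(B)$.

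\emph{Step 4.} It remains to check that $Z$ is a strong left ideal. It is $\lambda$-invariant, since $\lambda$ fixes $L$ pointwise. A $\lambda$-invariant multiplicative subgroup is also an additive subgroup. That subgroup is additively normal because it is additively central. So $Z$ is an infinite connected strong left ideal contained in $L$, and minimality of $L$ forces $Z=L$. Therefore $L\subseteq\ann(B)$.

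The only non-formal step is Step~2, which depends on the multiplicative group being connected nilpotent, so that Lemma~\ref{center} applies to $L$ as a normal subgroup of $B_\circ$.
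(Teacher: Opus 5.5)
Your proof is correct and follows essentially the same route as the paper. Lemma~\ref{BLL_1} gives $L\le Z^+(B)$ and $B\ast L=\{0\}$, and then Lemma~\ref{center} applied in $B_\circ$, together with minimality of $L$, gives multiplicative centrality. The differences are cosmetic. The paper first notes that $L$ is a trivial brace whose additive subgroups are all strong left ideals, so $L$ is minimal normal in $B_\circ$ and the ``in particular'' clause of Lemma~\ref{center} applies at once. You instead apply the first clause to $(L\cap Z(B_\circ))^0$ and invoke minimality afterwards. Your Step~3 also spells out the check $L\subseteq\ker(\lambda)$, which the paper leaves to the reader.
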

\begin{proof}
We have $L\leq Z^+(B)$ and $B\ast L=\{0\}$ by Lemma \ref{BLL_1}. Thus $L$ is a trivial brace whose additive subgroups are strong left ideals in $B$. Therefore $L$ is a minimal normal multiplicative subgroup of $B$, and $L\leq Z^\circ(B)$ by Lemma \ref{center}. Thus $L\subseteq \ann(B)$. 
\end{proof}

\section{Bisoluble skew braces of finite Morley rank}\label{sec:4}
In this section, we prove that bisoluble connected skew braces of finite Morley rank and left chief length less than or equal to $3$ are weakly soluble. We start with an easy case.

\begin{proposition}\label{ChiefLen1}
Let $B$ be a bisoluble connected skew brace of finite Morley rank with left chief length $1$. Then $B$ is trivial, minimal and of abelian type.
\end{proposition}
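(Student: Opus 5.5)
The plan is to show that $\ker(\lambda)$ is a definable strong left ideal. Left chief length $1$ then forces it to be finite or all of $B$. The finite case I will rule out by linearising the $\lambda$-action and reaching a contradiction by a direct computation.

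First, $B$ itself is the unique left chief factor $B/\{0\}$. So by Lemma \ref{Minimality}, $B_+$ is abelian, it is either minimal or of prime exponent, and $\B'$ acts trivially on it. Since the commutator in $\B$ of $(0,b)$ and $(0,c)$ is $(0,[b,c]_\circ)$, this gives $B'_\circ\le K:=\ker(\lambda)$.

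Next, $K$ is a definable strong left ideal. It is a skew sub-brace by the lemma above, and it is additively normal because $B_+$ is abelian. For $\lambda$-invariance, take $b\in B$ and $k\in K$, and put $k'=b\circ k\circ b^{-1}\in K$. Then $b\circ k=k'\circ b=k'+b$, so $\lambda_b(k)=-b+k'+b=k'\in K$. Since the left chief length is $1$, either $K=B$ or $K$ is finite. If $K=B$, then $B\ast B=\{0\}$, so $B$ is trivial of abelian type. Minimality then follows from Lemma \ref{Minimality}: the $\B$-action is trivial, so every infinite definable subgroup is a strong left ideal and must equal $B$.

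So suppose $K$ is finite. Then the connected group $B'_\circ\le K$ is trivial, and $B_\circ$ is abelian. Hence $\lambda(B_\circ)$ is an abelian group acting faithfully and irreducibly on $B_+$. Irreducibility holds because every $\lambda$-invariant subgroup of the abelian group $B_+$ is a strong left ideal. By Theorem \ref{ZilberLin} we may identify $B_+$ with $F_+$ for a field $F$, with $\lambda_b$ equal to multiplication by $f(b)\in F^\times$, where $f:B_\circ\to F^\times$ is a homomorphism. Then $b\circ c=b+f(b)c$.

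The key step, which I expect to be the crux, uses commutativity of $\circ$. From $b\circ c=c\circ b$ we get $(1-f(c))\,b=(1-f(b))\,c$ for all $b,c\in B$. Since $K$ is finite, we can fix $c$ with $f(c)\neq 1$ and set $e=c/(1-f(c))\in F$. Then every $b\in B$ satisfies $b=(1-f(b))\,e$. Applying this to $b=e$, which lies in $B=F_+$, gives $e=(1-f(e))e$. As $e\ne0$, this forces $f(e)=0$, which is impossible since $f(e)\in F^\times$. Hence the finite case cannot occur, and $B$ is trivial, minimal and of abelian type.

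The main point to double-check is that the Zilber linearisation really makes every $\lambda_b$ a scalar multiplication on the same field. The action of the abelian group $\lambda(B_\circ)$ is faithful and irreducible, so this holds, and the rest is the elementary computation above.
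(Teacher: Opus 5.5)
Your proof is correct. The first half ($B'_\circ\le\ker(\lambda)$, $\ker(\lambda)$ a strong left ideal, hence finite or all of $B$) is the paper's argument, with your direct $\lambda$-invariance computation replacing the appeal to Lemma \ref{lemuseful}. The routes differ once $B_\circ$ is abelian and $\ker(\lambda)$ is finite.

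The paper stays elementary there. It picks $a$ with $\ast_a\neq 0$ and shows $\ast_a$ is not surjective: a preimage $b$ of $-a$ would give $\lambda_b(a)=0$ by commutativity. So $\ker(\ast_a)$ is infinite. It is $\lambda$-invariant because the $\lambda_b$ commute, and its connected component is a forbidden infinite proper strong left ideal.

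You instead use Zilber linearisation to make each $\lambda_b$ a scalar $f(b)$. Then $\ast_c$ is multiplication by $f(c)-1\neq 0$, hence bijective. Your element $e=c/(1-f(c))$ is exactly the preimage with $c\ast e=-c$, and commutativity ($e\ast c=c\ast e$) then forces $\lambda_e=0$. So both proofs hinge on the same contradiction: commutativity of $\circ$ is incompatible with $-c$ lying in the image of $\ast_c$. The paper reaches it without any field, while your route needs the heavier theorem, though it makes the obstruction very transparent.

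For Zilber's theorem you should state explicitly that $\lambda(B_\circ)\cong B_\circ/\ker(\lambda)$ is infinite. The theorem requires this, though the paper's statement of Theorem \ref{ZilberLin} leaves it implicit. It holds here because $\ker(\lambda)$ is finite and $B$ is infinite. The point you flag is fine: the linearisation identifies $B_+\rtimes\lambda(B_\circ)$ with a subgroup of $F_+\rtimes F^\times$, so every $\lambda_b$ really is multiplication by a scalar of the same field $F$.
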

\begin{proof}
By Lemma \ref{Minimality} the $\lambda$-action of $B_\circ'$ on $B$ is trivial and $B_+$ is abelian, so any proper left ideal is strong, whence finite. Then $\ker(\lambda)$ satisfies (i), (ii) and (iv) of Lemma \ref{lemuseful}, and must be an ideal. If $B_\circ'$ is non-trivial, it is infinite by connectivity, so $\ker(\lambda)$ is an infinite ideal and must equal $B$. But then $B$ is trivial, of abelian type, and minimal.

So we may assume that $B_\circ$ is abelian as well. If $B$ is non-trivial, there is $0\neq a\in B$ such that the map $\ast_a:x\mapsto a\ast x$ is a non-trivial homomorphism from $B_+$ to $B_+$ by Lemma \ref{WelDefstar}. Moreover, $\ast_a$ is not surjective, as otherwise there would exist $b\in B$ with $a\ast b=-a$, whence $\lambda_b(a)=0$ and $a=0$, a contradiction. Now $\ker(\ast_a)$ cannot be finite, as then $\im(\ast_a)=B$ by connectivity. Note that $\ker(\ast_a)=\{x\in B:\lambda_a(x)=x\}$, and
$$\lambda_a(\lambda_b(c))=\lambda_{b}(\lambda_a(c))=\lambda_b(c)$$
for every $b\in B$ and $c\in\ker(\ast_a)$. Hence $\ker(\ast_a)^0$ is an infinite proper connected ideal, contradicting the fact that $B$ has left chief length $1$.

It follows that $B$ is trivial, minimal, and of abelian type.
\end{proof}

\begin{corollary}\label{mrk1}
Let $B$ be a connected skew brace of Morley rank $1$. Then $B$ is trivial and of abelian type.
\end{corollary}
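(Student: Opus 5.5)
The plan is to reduce to Proposition \ref{ChiefLen1}. For that we need two things: that $B$ is bisoluble, and that $B$ has left chief length $1$.

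First, since $\RM(B)=1$ and $B$ is connected (Convention: all braces are definable, and connectedness of $B$ is part of the hypothesis, or follows from taking $B=B^0$), both $B_+$ and $B_\circ$ are connected groups of Morley rank $1$. Such groups are minimal, so by Reineke's lemma both are abelian. In particular $B$ is bisoluble.

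Next, we check that the left chief length is $1$. Any connected strong left ideal $L$ of $B$ is a connected definable additive subgroup, so $\RM(L)\in\{0,1\}$. If $\RM(L)=0$ then $L$ is finite, hence trivial by connectedness. If $\RM(L)=1$ then $L$ has finite index in $B_+$, hence equals $B$ by connectivity. So $B$ itself is $\B$-simple, and $B>\{0\}$ is a left chief series of length $1$. Proposition \ref{ChiefLen1} then gives that $B$ is trivial (and of abelian type, which we already knew).

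The main point to be careful about is the hypothesis that $B$ is connected (or infinite). A finite brace of rank $0$ is excluded, since Morley rank $1$ forces $B$ to be infinite and connectedness is assumed. If connectedness were not assumed, we would simply replace $B$ by $B^0$, but the statement presumably intends connected $B$. Everything else is immediate from results already established.
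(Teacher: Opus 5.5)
Your proposal is correct and is essentially the paper's argument: the paper states this corollary without proof as an immediate consequence of Proposition \ref{ChiefLen1}. You supply the two hypotheses it leaves implicit, namely bisolubility (Reineke's theorem applied to the connected rank-$1$ groups $B_+$ and $B_\circ$) and left chief length $1$ (every infinite definable additive subgroup has finite index, hence equals $B$ by connectedness).
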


\begin{corollary}\label{c:mintriv}
Let $B$ be a bisoluble connected skew brace, and $L$ a minimal strong left ideal. Then $L$ is a trivial skew brace of abelian type.
\end{corollary}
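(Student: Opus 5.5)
The plan is to reduce Corollary \ref{c:mintriv} to Proposition \ref{ChiefLen1} by viewing $L$ as a skew brace in its own right.

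Since $L$ is a minimal strong left ideal, it is in particular a skew sub-brace. It is a definable sub-brace of $B$, so it has finite Morley rank. It is connected: $L^0$ is additively characteristic in $L$, hence again a strong left ideal of $B$. It has finite index in $L$ and is infinite, so minimality forces $L=L^0$. It is bisoluble, because $L_+\le B_+$ and $L_\circ\le B_\circ$ are soluble.

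The real content is that $L$, as a brace in its own right, has left chief length $1$. Minimality of $L$ only says this relative to $B$, so we must rule out infinite proper strong left ideals of $L$ itself. Rather than attempt that directly, I would rerun the argument of Proposition \ref{ChiefLen1} inside $L$ using only $B$-invariant objects.

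\begin{enumerate}[(1)]
\item By Lemma \ref{Minimality} applied to the left chief factor $L/\{0\}$ of $B$, $L_+$ is abelian. Moreover $\B'$ acts trivially on $L$, so in particular $B'_\circ\ast L=\{0\}$.
\item Consider $K_L=\ker(\lambda^L)\cap L=\{x\in L: x\ast L=0\}$.
\begin{enumerate}[(a)]
\item $K_L$ contains $L'_\circ$, by (1).
\item $K_L$ is normalised multiplicatively by $B$, since $\lambda^L$ is a homomorphism on $B_\circ$.
\item $K_L$ is $\lambda$-invariant under $B$: for $b\in B$ and $k\in K_L$ one has $\lambda_{b\circ k\circ b^{-1}}=\lambda_b\lambda_k\lambda_b^{-1}$ on $L$, and one uses $\lambda_b(k)=-b+b\circ k$ together with $L$ abelian.
\end{enumerate}
Mimicking Lemma \ref{lemuseful}, $K_L$ is then an additive subgroup, normal since $L_+$ is abelian and $\lambda$-invariant. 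So $K_L$ is a strong left ideal of $B$. If $L'_\circ\neq 0$, it is infinite by connectivity, so $K_L^0=L$ by minimality and $L$ is trivial.
\item If $L_\circ$ is abelian, take $a\in L$ and consider the homomorphism $\ast_a$ on $L_+$. As in Proposition \ref{ChiefLen1}, $\ast_a$ is not surjective, so $\ker(\ast_a)$ is infinite. One then wants a $B$-invariant version, which I would take to be $\{x\in L: L\ast x=0\}=C_L(\lambda^{\mathfrak L})$ or $L\ast L$.
\end{enumerate}

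The cleanest choice in (3) is the subgroup $[L,L]$ generated by $L\ast L$. By (1), $\B'$ acts trivially on $L$, so the $\lambda$-action of $B_\circ$ on $L$ factors through an abelian group. Using identities (i) and (ii) for $\ast$, $L\ast L$ should then be $\lambda_b$-stable: $\lambda_b(a\ast x)=\lambda_b\lambda_a(x)-\lambda_b(x)=\lambda_{b\circ a\circ b^{-1}}(\lambda_b x)-\lambda_b x$, and $\lambda_{b\circ a\circ b^{-1}}=\lambda_a$ on $L$ because $\B'$ acts trivially. Hence $\lambda_b(a\ast x)=a\ast\lambda_b(x)$, so the additive span $S$ of $L\ast L$ is a strong left ideal of $B$. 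It is connected, being the sum of the connected images $\im(\ast_a)$.

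If $L$ is nontrivial, minimality gives $S=L$. On the other hand, the commuting family of endomorphisms $\ast_a$, $a\in L$, cannot have images generating $L$ when none is surjective: by Zilber linearisation the $\lambda^L$ acting on $L$ is an abelian group of field multiplications. If the action is nontrivial, $L\cong K_+$ with $L_\circ$ embedding in $K^\times$. Then $\lambda:L_\circ\to K^\times$ has connected image, and the definable isogeny-type map $a\mapsto\lambda_a$ contradicts Lemma \ref{Kequi}(iv): $\lambda_a\mapsto a\ast 1$ relates a definable subgroup of $K^\times$ to a subgroup of $K_+$. Hence $\lambda^L$ is trivial, so $L$ is trivial, and then $L_+=L_\circ$ is abelian.

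I expect the main obstacle to be this last step, that is, getting a genuine definable isogeny rather than just a map, so that Lemma \ref{Kequi}(iv) applies.
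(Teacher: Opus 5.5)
\textbf{There is a genuine gap, and it sits in step (2).} It is exactly the obstacle you flagged at the outset. You need $K_L=\ker(\lambda^L)\cap L$ to be a strong left ideal of $B$, and the justification fails at three points.
\begin{enumerate}[(a)]
\item Your item (b) is wrong. $\ker(\lambda^L)$ is normal in $B_\circ$, but its intersection with $L$ is normal in $B_\circ$ only if $L_\circ$ is. That would make $L$ an ideal of $B$, which is not given.
\item Item (c) is asserted, not proved. The identity $\lambda_{b\circ k\circ b^{-1}}=\lambda_b\lambda_k\lambda_b^{-1}$ controls $b\circ k\circ b^{-1}$, not $\lambda_b(k)=-b+b\circ k$.
\item ``Normal since $L_+$ is abelian'' gives normality only inside $L$, not under additive conjugation by $B$.
\end{enumerate}
What you actually get, from Lemma \ref{lemuseful} applied inside the brace $L$, is that $K_L$ is an ideal of $L$. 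Minimality of $L$ among strong left ideals of $B$ says nothing about such subgroups.

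The missing ingredient is the ``moreover'' clause of Lemma \ref{Minimality}, which is what the paper's proof uses. If $L<B$ and $L$ contains an infinite proper connected $\mathfrak L$-invariant subgroup $H$, then $\mathfrak L$ acts trivially on $L$. Otherwise $L\cong K_+$ by linearisation, and $\{x\in K: xH\subseteq H\}$ would be an infinite proper definable subring of $K$, which is impossible in finite Morley rank. With this the paper's proof is immediate. Either $L$ has left chief length $1$ as a brace, and Proposition \ref{ChiefLen1} applies. Or $L$ has a proper infinite connected strong left ideal of its own; this is $\mathfrak L$-invariant, so $\lambda_a|_L=\mathrm{id}$ for all $a\in L$. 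Applying the same clause to $K_L^0$ would repair your step (2).

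\textbf{Step (3) is also wrong as written, though repairable.}
\begin{itemize}
\item Commuting non-surjective endomorphisms can perfectly well have images generating $L$; the two coordinate projections of $K_+\oplus K_+$ are an example.
\item Lemma \ref{Kequi}(iv) does not apply. The map $a\mapsto\lambda_a$ is defined on $L_\circ$, which is not a subgroup of $K_+$, and $t\mapsto t-1$ is not a homomorphism $K^\times\to K_+$.
\end{itemize}
Your own linearisation gives the right conclusion more simply. If $\B$ acts non-trivially on $L\cong K_+$, each $\lambda_a|_L$ is multiplication by some $t_a\in K^\times$. So $\ast_a|_L$ is multiplication by $t_a-1$, and is either zero or surjective. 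When $L_\circ$ is abelian, your non-surjectivity argument then forces $t_a=1$ for all $a\in L$, so $L$ is trivial. If $\B$ acts trivially on $L$, there is nothing to prove. The detour through the span of $L\ast L$ is unnecessary.
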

\begin{proof}
Lemma \ref{Minimality} shows that $L$ is abelian, and either a skew brace of left chief length $1$, whence trivial by Proposition \ref{ChiefLen1}, or is acted upon trivially by $\mathfrak L$. In either case $L$ is a trivial skew brace of abelian type.
\end{proof}

\begin{corollary}\label{lemsolrk3}
Let $B$ be a connected skew brace of finite Morley rank. If $B$ is of soluble type and multiplicatively nilpotent, then $B$ is strongly left soluble. More precisely, any left chief series is abelian. In particular, if $B_\circ$ is abelian, then $B$ is soluble.
\end{corollary}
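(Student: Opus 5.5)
By Lemma \ref{lemuseful}-style bookkeeping, the statement reduces to a claim about one left chief factor at a time. Since a nilpotent group is soluble, $B$ is bisoluble, so Lemma \ref{Minimality} applies. Fix a left chief series $(L_i:i\le n)$ and write $L=L_i$, $M=L_{i+1}$. Each $L_{i+1}$ is a skew sub-brace, so to get $[L,L]\subseteq M$ it suffices to show $[L,L]_+\subseteq M$ and $L\ast L\subseteq M$. The first inclusion is Lemma \ref{Minimality}, since $L/M$ is abelian. So the whole proof comes down to showing that the $\lambda$-action of $L_\circ$ on $L/M$ is trivial. The final sentence of the statement then follows: when $B_\circ$ is abelian, every strong left ideal is multiplicatively normal, hence an ideal, so the series is an abelian ideal series.

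Suppose instead that $L_\circ$ acts non-trivially on $L/M$. Then $\B$ acts non-trivially and irreducibly on $L/M$. By Theorem \ref{TriActG'} and Theorem \ref{ZilberLin}, there is an algebraically closed field $K$ with $L/M\cong K_+$, and each $b\in B$ acts through a scalar $t(b)\in K^\times$. For $l,l'\in L$ we have $l\circ l'=l+\lambda_l(l')$, so modulo $M$
$$l\circ l'+M=(l+M)+t(l)(l'+M).$$
Hence $\theta:l\mapsto(l+M,t(l))$ is a definable group homomorphism from $L_\circ$ to the affine group $K_+\rtimes K^\times$. Its image $G$ is connected and nilpotent, being an image of a subgroup of $B_\circ$. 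Its projection onto $K$ is all of $K$, and its projection $T$ onto $K^\times$ is non-trivial by assumption.

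The key step is an affine-group computation. Pick $g=(a,t)\in G$ with $t\ne1$. If $(c,1)\in G$ with $c\ne0$, then the iterated commutators of $(c,1)$ with $g$ are translations by $(t-1)^k c\neq0$, contradicting nilpotency of $G$. So $G\cap K_+=\{0\}$, and $G\cong T$ is abelian. Moreover $\RM(T)=\RM(G)\ge\RM(K)$, because $G$ surjects onto $K$ as a definable set. So $T=K^\times$, and $G$ is a definable complement to $K_+$.

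Now take $g=(a,t)$ with $t\neq1$ and set $c=a/(1-t)$, the fixed point of $g$. Every element of $G$ commutes with $g$, and a direct computation shows that any $(b,s)$ commuting with $g$ satisfies $b=(1-s)c$, i.e.\ it also fixes $c$. Hence every element of $G$ has first coordinate in $\{(1-s)c:s\in K^\times\}=K\setminus\{c\}$. This contradicts the surjectivity of $G$ onto $K$.

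Therefore $t$ is trivial on $L$, i.e.\ $L\ast L\subseteq M$, and each factor of the left chief series is abelian in the brace sense. I expect the main obstacle to be checking carefully that $\theta$ is a homomorphism. This relies on $\lambda$-invariance of $M$ and on the fact that the scalar $t(l)$ is well defined on $L/M$ for each individual $l$; it does not require $M$ to act trivially on $L/M$.
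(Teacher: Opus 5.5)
Your proof is correct, and it takes a genuinely different route from the paper's.

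\textbf{The paper's route.} The paper argues by induction on $\RM(B)$. By Corollary \ref{SolBiNil}, the first term $L_1$ of the left chief series is an ideal. The top factor $B/L_1$ is a trivial brace by Proposition \ref{ChiefLen1}. For the lower factors, it refines into an $L_1$-left chief series of $L_1$ and splits into two cases:
\begin{itemize}
\item if $L_i/L_{i+1}$ remains an $L_1$-chief factor, it applies the induction hypothesis;
\item otherwise it uses the ``moreover'' clause of Lemma \ref{Minimality}, which rests on the impossibility of a proper infinite definable subfield in finite Morley rank.
\end{itemize}

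\textbf{Your route.} You treat each factor $L/M$ directly. You linearise $L/M\cong K_+$ and use $l\circ l'=l+\lambda_l(l')$ to map $L_\circ$ homomorphically into $K_+\rtimes K^\times$. The image is nilpotent and surjects onto $K$. If $L\ast L\not\subseteq M$, it also contains a non-trivial homothety. That is impossible, because a nilpotent subgroup of $\mathrm{AGL}_1(K)$ with non-trivial linear part contains no non-zero translations, hence is abelian, hence fixes a point.

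\textbf{Comparison.} Your argument avoids induction on rank, Corollary \ref{SolBiNil}, Proposition \ref{ChiefLen1} and the subfield argument. It also yields a slightly stronger local statement: in a connected bisoluble skew brace of finite Morley rank, any left chief factor $L/M$ with $L_\circ$ nilpotent satisfies $L\ast L\subseteq M$. The paper's proof is shorter on the page because it mostly assembles lemmas it has already established and needs elsewhere. Yours is more self-contained and elementary at the level of a single factor.

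\textbf{Two minor points.}
\begin{itemize}
\item The rank computation giving $T=K^\times$, and the connectedness of $G$, are not needed. Abelianity of $G$, an element with $t\neq 1$, and surjectivity of $G$ onto $K$ already give the contradiction.
\item The set $\{(1-s)c:s\in K^\times\}$ equals $K\setminus\{c\}$ only when $c\neq0$; if $c=0$ it is $\{0\}$. In both cases it is a proper subset of $K$, so the contradiction stands.
\end{itemize}
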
 
\begin{proof} If $B$ has left chief length $1$, we are done by Proposition \ref{ChiefLen1}. So suppose otherwise, and let $(L_i:i\le n)$ be a $B$-left chief series. Then $L_1$ is an ideal by Corollary \ref{SolBiNil}; by induction on $\RM(B)$ any $L$-left chief series of $L$ is abelian.
	
Refine the series $(L_i:1\le i\le n)$ into an $L$-left chief series of $L$. By Lemma \ref{Minimality} any $B$-chief factor $L_i/L_{i+1}$ for $1\le i<n$ is either an $L$-chief factor and $\mathfrak L_i$ acts trivially on it by inductive hypothesis, or $\mathfrak L$ acts trivially on it. Moreover $L_0/L_1$ is a trivial brace by Proposition \ref{ChiefLen1}. It follows that $(L_i:i\le n)$ is abelian, and $B$ is strongly left soluble.

If $B_\circ$ is abelian, then strong left ideals are ideals, so $B$ is soluble.
\end{proof}
	
\begin{lemma}\label{BisFinCha}
Let $B$ be a connected bisoluble skew brace of finite Morley rank, and let $L$ be a maximal connected strong left ideal of $B$ such that either $(L,+)$ or $(L,\circ)$ is of finite exponent. Then $L$ is an ideal of $B$, both $(L,\circ)$ and $(L,+)$ are of finite exponent, and $B$ is strongly left soluble. More precisely, any left chief series passing through $L$ is abelian.
\end{lemma}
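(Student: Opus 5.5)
The plan is to prove the exponent transfer first, then normality of $L$ in $B_\circ$, and finally the abelianness of a left chief series through $L$.

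\emph{Exponent transfer.} I would argue by induction on $\RM(B)$. I will apply the statement to $L$ itself, viewed as a connected bisoluble skew brace of smaller rank.

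Suppose first that $(L,+)$ has finite exponent. Take a maximal connected strong left ideal $L'$ of $L$. Then $(L',+)$ still has finite exponent, so by induction $L$ is strongly left soluble and every $L$-left chief series through $L'$ is abelian. In particular $L$ is weakly soluble, and Remark \ref{r:ws-fe} gives that $(L,\circ)$ has finite exponent too.

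If instead $(L,\circ)$ has finite exponent, I would run the same argument. I would use that $L$ and $L^{op}$, as well as $L'$, are multiplicatively of finite exponent. So the inductive hypothesis again applies, and Remark \ref{r:ws-fe} transfers the exponent back to $(L,+)$. If $L$ has left chief length $1$, Proposition \ref{ChiefLen1} makes $L$ trivial, and the two exponents coincide directly.

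\emph{Normality of $L$ in $B_\circ$.} I would imitate Corollary \ref{SolBiNil}, replacing the lower central series by the multiplicative derived series, which is definable and connected. Let $C$ be its last term with $C\circ L=B$, and put $D=C'_\circ$. By Lemma \ref{cosesopraleft}, $D\circ L$ is a proper connected strong left ideal containing $L$, so $D\le L$. Hence $C$ is abelian modulo $L$, and $B/L$ is covered by $C/(C\cap L)$.

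It remains to see that $C$ normalises $L$ multiplicatively. Here I would use the dichotomy of Lemma \ref{Minimality} for the factor $B/L$: it is either minimal with trivial $\lambda$-action, or it carries a field structure with $\B/C_\B(B/L)$ embedded in $K^\times$.
\begin{itemize}
\item If $C=B$ or $C\cap L$ is small, then $B_\circ/L$ is an abelian group isogenous to $B/L$.
\item Consider the conjugation action of $C$ on the finite-exponent group $(L,\circ)$. The resulting commutators $[C,L]_\circ$ should produce, modulo $L$, a definable section that is simultaneously of finite exponent and isogenous to a section of $K^\times$.
\end{itemize}
This is what Lemma \ref{Kequi}(ii) forbids unless the section is trivial. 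Alternatively, I would verify condition (iv) of Lemma \ref{lemuseful}, namely $L\ast B\subseteq L$, using the homomorphism $\ast_b$ of Lemma \ref{WelDefstar} on $L/L_1$-type sections. I expect this normality step to be the main obstacle: it is exactly where the finite-exponent hypothesis must interact with the linearisation of $B/L$.

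\emph{Abelian series.} Once $L$ is an ideal, the top factor $B/L$ is a connected bisoluble brace of left chief length $1$. By Proposition \ref{ChiefLen1} it is trivial, so $[B,B]\subseteq L$.

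Now take any left chief series through $L$ and refine the lower part to an $L$-left chief series, as in Corollary \ref{lemsolrk3}. Each $B$-factor below $L$ is then of one of two kinds:
\begin{itemize}
\item an $L$-factor on which $\mathfrak L_i$ acts trivially, by the inductive case already established for $L$;
\item a factor on which the whole of $\mathfrak L$ acts trivially, by the moreover clause of Lemma \ref{Minimality}.
\end{itemize}
In both cases $[L_i,L_i]\subseteq L_{i+1}$, so the series is abelian and $B$ is strongly left soluble.
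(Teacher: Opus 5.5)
Your exponent transfer (induction on Morley rank applied to $L$ and a maximal connected strong left ideal $L'$, then Remark~\ref{r:ws-fe}) matches the paper. So does your last step (Proposition~\ref{ChiefLen1} for $B/L$, then refining below $L$ and using either the inductive case or the moreover clause of Lemma~\ref{Minimality}, as in Corollary~\ref{lemsolrk3}). The gap is the middle step, which you flag yourself: you never prove that $L$ is an ideal.

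\begin{itemize}
\item The derived-series imitation of Corollary~\ref{SolBiNil} only yields $C'_\circ\le L$ for the last derived term $C$ with $C\circ L=B$. This does not force $C$ to normalise $L$. In $K_+\rtimes K^\times$, take $C=K_+$ and $L=K^\times$: then $C'\le L$ and $CL$ is the whole group, yet $L$ is not normal. The lower central series works in Corollary~\ref{SolBiNil} because it gives $[C,B]_\circ\le L$, which the derived series does not.
\item Your bullet about $[C,L]_\circ$ ``modulo $L$'' presupposes the multiplicative normality you are trying to prove.
\item The maps $\ast_b$ on sections $L/L_1$ are not where the obstruction lives.
\end{itemize}

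The missing idea is short: look at the $\lambda$-action of $L$ itself on the top factor $B/L$. Suppose it is non-trivial. Then Lemma~\ref{Minimality} (via Theorem~\ref{ZilberLin}) gives $B/L\cong K_+$ with $\B/C_{\B}(B/L)$ embedded in $K^\times$. The image of $L_\circ$ is then a non-trivial definable connected, hence infinite, subgroup of $K^\times$. It has finite exponent, because your exponent transfer shows $(L,\circ)$ has finite exponent even when the hypothesis was only on $(L,+)$. This contradicts Lemma~\ref{Kequi}(ii). Hence $\lambda_\ell$ acts trivially on $B/L$ for every $\ell\in L$, i.e.\ $L\ast B\subseteq L$. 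Since $L$ is additively normal and $\lambda$-invariant, Lemma~\ref{lemuseful} with (i), (iii) and (iv) shows that $L$ is an ideal. No multiplicative series of $B_\circ$ is needed.
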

\begin{proof}Let $\mathcal L$ be a left chief series for $B$ passing through $L$. If $L$ has left chief length $1$, it is abelian and trivial by Proposition \ref{ChiefLen1}, and of finite exponent. Otherwise consider a left chief series for $L$ refining $\mathcal L\setminus\{B\}$. The maximal connected strong left ideal of $L$ in that series has finite (additive or multiplicative) exponent. By induction on Morley rank $L$ is strongly left soluble, so both $L_+$ and $L_\circ$ have finite exponent by Remark \ref{r:ws-fe}.
	
In either case the $\lambda$-action of $L$ on $B/L$ is trivial since otherwise by connectedness $L_\circ/\ker(\lambda^{B/L})$ would be isogenous to an infinite subgroup of the multiplicative group of an algebraically closed field, contradicting Lemma \ref{Kequi}. Therefore $L\ast B\subseteq L$, and $L$ is an ideal by Lemma \ref{lemuseful}. We now finish using Lemma \ref{Minimality} as in the proof of Corollary \ref{lemsolrk3}.
\end{proof}

Next, we deal with the case of left chief length $2$.

\begin{proposition}\label{GenCas1}
Let $B$ be a connected bisoluble skew brace of finite Morley rank and left chief length at most $2$. Then $B$ is soluble of derived length at most $2$.
\end{proposition}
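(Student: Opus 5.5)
If the left chief length is $1$, Proposition \ref{ChiefLen1} shows that $B$ is trivial and of abelian type, so the series $B\ge\{0\}$ is abelian and we are done. So assume the length is exactly $2$. Fix a left chief series $B>L>\{0\}$. The aim is to show that $L$ is an ideal. We also want $B/L$ and $L$ to be trivial braces of abelian type. The series $B\ge L\ge\{0\}$ is then an abelian ideal series, so $B$ is soluble of derived length at most $2$.

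Some facts come for free. By Corollary \ref{c:mintriv}, $L$ is a trivial brace of abelian type, since it is a minimal strong left ideal. By Lemma \ref{Minimality}, $B/L$ is additively abelian, and $\B'$ acts trivially on both chief factors; in particular $B_\circ'\ast B\subseteq L$. So the real content is the ideal property of $L$, together with $[B,B]\subseteq L$. The latter also requires $B_\circ'\subseteq L$ and $B\ast B\subseteq L$.

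\textbf{Case 1: $B_\circ'\not\subseteq L$.} Then $(B_\circ')^0$ is a connected normal multiplicative subgroup not contained in $L$. Lemma \ref{cosesopraleft} gives $(B_\circ')^0\circ L=B$. I would then argue with the chief factor $B/L$. As in the proof of Proposition \ref{ChiefLen1}, $\ker(\lambda^{B/L})$ satisfies conditions (i), (ii) and (iv) of Lemma \ref{lemuseful} modulo $L$. It contains $B_\circ'$, hence is all of $B$. So $B\ast B\subseteq L$ and $B/L$ is a trivial brace. Now $L$ is a strong left ideal with $L\ast B\subseteq L$, so Lemma \ref{lemuseful}, using (i), (iii) and (iv), makes $L$ an ideal. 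Then $[B,B]\subseteq L$, since $B/L$ is a trivial brace with abelian additive group, hence abelian multiplicative group.

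\textbf{Case 2: $B_\circ'\subseteq L$.} Then $L$ is multiplicatively normal. If $L$ has finite exponent, Lemma \ref{BisFinCha} already finishes. Otherwise $L$ is minimal and divisible. If $B/L$ has finite exponent, Lemma \ref{l:finmin=minfin} with $L_1=B$ shows that $B$ is abelian and produces an intermediate left ideal, which contradicts chief length $2$. So both chief factors are minimal or of characteristic-zero field type. It remains to get $L\ast B\subseteq L$, i.e.\ that the $\lambda$-action of $L$ on $B/L$ is trivial, and then to apply Lemma \ref{lemuseful}.

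Suppose the action is not trivial. Then Theorem \ref{ZilberLin} gives a field $K$ with $B/L\cong K_+$. Since $\B'$ acts trivially, the image of $L_\circ$ in $K^\times$ is a connected infinite definable subgroup. Its kernel $L\cap\ker(\lambda^{B/L})$ contains $L\ast L$-related terms; because $L$ is trivial as a brace, $L_\circ\cong L_+$. This makes a definable section of $L_+$ definably isogenous to a subgroup of $K^\times$. Using minimality of $L_+$ and the analysis of $\lambda$ on $L$, I would try to show that $L_+$ is itself isogenous to a subgroup of some $F_+$, via the action of $B$ on $L$. That contradicts Lemma \ref{Kequi}(i) or (iv). This is the main obstacle: the two fields coming from $B/L$ and from $L$ may differ, so I would try to compare them through the map $\ast_b$ of Lemma \ref{WelDefstar}, or else show $L\le\ker\lambda$ directly.

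Once $L$ is an ideal, $B/L$ is a connected brace of chief length $1$, hence trivial of abelian type by Proposition \ref{ChiefLen1}. This completes the argument.
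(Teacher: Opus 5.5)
\textbf{There is a genuine gap.} Your strategy fixes an arbitrary left chief series $B>L>\{0\}$ and tries to prove that $L$ itself is an ideal. This cannot succeed, and Case 1 rests on a false step.

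Here is a counterexample. Let $K$ be algebraically closed of characteristic $0$, and put $B=K\times K^\times$ with $(n,h)+(n',h')=(n+n',hh')$ and $(n,h)\circ(n',h')=(n+hn',hh')$, the affine group. This is a connected bisoluble skew brace of Morley rank $2$, and $\lambda_{(n,h)}(n',h')=(hn',h')$. Both $K\times\{1\}$ and $\{0\}\times K^\times$ are maximal connected strong left ideals, so the left chief length is $2$. Take $L=\{0\}\times K^\times$. Then $B_\circ'=K\times\{1\}\not\subseteq L$, so you are in your Case 1. Yet $L$ is not multiplicatively normal, and $\ker(\lambda^{B/L})=K\times\{1\}\neq B$.

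So the inference ``$\ker(\lambda^{B/L})\supseteq B_\circ'$, hence $\ker(\lambda^{B/L})=B$'' is wrong. In Proposition \ref{ChiefLen1} it relied on chief length $1$, where an infinite ideal must be $B$. With chief length $2$ the kernel can be a proper ideal. Moreover, Lemma \ref{lemuseful} cannot be applied ``modulo $L$'' before you know that $L$ is an ideal. Your Case 1 conclusion $[B,B]\subseteq L$ even contradicts the case hypothesis $B_\circ'\not\subseteq L$, which should have been a warning sign.

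Conversely, Case 2 has no obstacle at all. A strong left ideal containing $B_\circ'$ is multiplicatively normal, hence an ideal. So $L\ast B\subseteq L$ is automatic, and $B/L$ is trivial by Proposition \ref{ChiefLen1}. Also, Lemma \ref{l:finmin=minfin} gives no contradiction: its $F$ lies between $B$ and $\{0\}$, not between $B$ and $L$. It simply yields another chief series with a finite-exponent term, to which Lemma \ref{BisFinCha} applies.

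\textbf{The missing idea.} Consider Case 1 with non-trivial $\lambda$-action of $L$ on $B/L\cong K_+$. As the example shows, this genuinely occurs, so you cannot hope to contradict it. Instead you must abandon $L$ and use the connected ideal $B_\circ'$. The paper proceeds as follows.
\begin{enumerate}
\item It shows $L\le Z^+(B)$. Otherwise $C_B^+(L)^0=L$, and linearising the conjugation action gives $L\cong F_+$ with $B/L$ isogenically embedded in $F^\times$. Meanwhile $L_\circ=L_+$ embeds isogenically in $K^\times$. The rank inequalities then force $L\isog K^\times$, contradicting Lemma \ref{Kequi}.
\item Hence $B_+$ is abelian by Lemma \ref{lemnotbothfe}.
\item For $b\in B_\circ'$, the map $\ast_b\colon B/L\to L$ of Lemma \ref{WelDefstar} is a homomorphism, since $B_\circ'$ acts trivially on both factors. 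A non-zero such map would be an isogeny, giving $K_+\cong B/L\isog L\isog K^\times$, which is again impossible.
\item So $B_\circ'\ast B=\{0\}$, and $B_\circ'$ is a trivial ideal by Lemma \ref{lemuseful}. Then $B/B_\circ'$ is trivial by Proposition \ref{ChiefLen1}, and $B>B_\circ'>\{0\}$ is the required abelian series of length $2$.
\end{enumerate}
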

\begin{proof}
If $B$ has left chief length $1$, it is trivial, minimal and abelian by Proposition \ref{ChiefLen1}. So we may assume that B has a proper infinite connected strong left ideal $L$. By Lemma \ref{Minimality} both $L$ and $B/L$ are additively abelian, and either minimal or of finite exponent. Moreover, $L$ is a trivial skew brace by Corollary \ref{c:mintriv}.

If $L$ is of finite exponent, it is an ideal by Lemma \ref{BisFinCha}, so $B/L$ is a trivial skew brace and $B$ is soluble. Therefore, we may assume that $L$ is not of finite exponent, whence minimal; by Lemma \ref{l:finmin=minfin} we may assume that $B/L$ is minimal as well. Now by Lemma \ref{cosesopraleft} either  $B_\circ'\circ L=B$ or $B_\circ'\subseteq L$. In the second case $L$ is an ideal, so $B/L$ is a trivial brace by Proposition \ref{ChiefLen1} and $B$ is soluble. Similarly, if $L\ast B\subseteq L$ then $L$ is an ideal by Lemma \ref{lemuseful}, and $B$ is again soluble.

Finally, assume that $B_\circ'\circ L=B$ and the $\lambda$-action of $L$ on $B/L$ is non-trivial. So $B/L\cong K_+$ for some algebraically closed field $K$, and $L_\circ$ embeds isogenically into $K^\times$ (the kernel must be finite by minimality). Then $\RM(B/L)=\RM(K)\ge\RM(L)$.

Consider the additive conjugation action of $B$ on $L$. If it is non-trivial, then 
$C_B^+(L)<B$ is a strong left ideal in $B$ containing $L$ so $C_B^+(L)^0=L$ by minimality of $B/L$. The action is irreducible by minimality of $L$, so $L_+=L_\circ$ is isomorphic to the additive group of an algebraically closed field, while $B/L$ isogenically embeds into its multiplicative group. Then $\RM(B/L)\le\RM(L)$, and equality must hold. It follows that $L\isog K^\times$, contradicting Lemma \ref{Kequi}. Hence $C_B^+(L)=B$ by $\B$-minimality of $B/L$, i.e.\ $L\le Z^+(B)$, and $B_+$ is abelian by Lemma \ref{lemnotbothfe}.

Now, let $b\in B_\circ'$. Since the $\lambda$-action of $B'_{\circ}$ on $L_+$ and on $B/L$ is trivial, we have $B_\circ'\ast L=\{0\}$ and $B_\circ'\ast B\subseteq L$, so the map  
$$\ast_b:x+L\mapsto b\ast x$$
from $B/L$ to $L$ is a well-defined homomorphism by Lemma \ref{WelDefstar}. If $\ast_b$ is not zero, then it is an isogeny by minimality of $B/L$ and $L$, so $K_+\cong B/L\isog L\isog K^\times$, contradicting Lemma \ref{Kequi}.

It follows that $B_\circ'\ast B=\{0\}$. So addition equals multiplication on $B_\circ'$ which is a multiplicatively normal skew sub-brace; it is trivially additively normal, and an ideal by Lemma \ref{lemuseful}. Then $B/B_\circ'$ and $B_\circ'$ are both minimal trivial braces by Proposition \ref{ChiefLen1}, and $B$ is soluble of derived length at most $2$.
\end{proof}

\begin{corollary}\label{rank2soluble}
Let $B$ be a connected skew brace of Morley rank $2$. Then $B$ is soluble of derived length at most $2$.
\end{corollary}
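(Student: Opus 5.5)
The plan is to reduce Corollary~\ref{rank2soluble} to Proposition~\ref{GenCas1}. For that we must check two things: that $B$ is bisoluble, and that its left chief length is at most $2$.

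\emph{Bisolubility.} Since $B$ is connected, both $B_+$ and $B_\circ$ are connected groups; by \cite[Theorem 3.7]{braces} the additive and multiplicative connected components coincide. Both groups have Morley rank $2$, being definable on the same underlying set $B$ with $\RM(B)=2$. Hence Theorem~\ref{structuremorleyrank2} applies to each of them, so $B_+$ and $B_\circ$ are soluble and $B$ is bisoluble.

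\emph{Left chief length.} Take a left chief series $B=L_0>L_1>\cdots>L_n=\{0\}$. Each factor $L_i/L_{i+1}$ is $\B$-simple, hence infinite. Its Morley rank is therefore at least $1$, and these ranks add up to $\RM(B)=2$. It follows that $n\le 2$.

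Proposition~\ref{GenCas1} now gives that $B$ is soluble of derived length at most $2$. The only point needing care is that the ranks add along the series, $\RM(L_i)=\RM(L_i/L_{i+1})+\RM(L_{i+1})$. This is the additivity of Morley rank for definable subgroups recalled in the preliminaries, applied to the additive groups. I do not expect any real obstacle here.
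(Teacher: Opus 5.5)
Your proof is correct and is the paper's argument: Theorem~\ref{structuremorleyrank2} gives bisolubility, and Proposition~\ref{GenCas1} finishes. The only difference is that you explicitly check that the left chief length is at most $2$, using additivity of Morley rank over the infinite $\B$-simple factors; the paper leaves this step implicit.
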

\begin{proof}
Both $B_+$ and $B_\circ$ are soluble by Theorem \ref{structuremorleyrank2}. The result now follows from Proposition~\ref{GenCas1}.
\end{proof}

\begin{lemma}\label{NoIdeLen2}
Let $B$ be a connected bisoluble skew brace of finite Morley rank. Let $L$ be a connected strong left ideal of $B$-left chief length at most $2$. Then $L$ is strongly left soluble.
\end{lemma}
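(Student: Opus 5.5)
We argue by cases on the $B$-left chief length of $L$, aiming to exhibit a strong left ideal series of $L$ with abelian factors; "strongly left soluble" here means soluble with respect to strong left ideals of $L$. If $L$ has $B$-left chief length $1$, then $L$ is a minimal strong left ideal of $B$. Corollary \ref{c:mintriv} then shows that $L$ is a trivial skew brace of abelian type, and the series $L\ge\{0\}$ is abelian.

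So assume the $B$-left chief length is $2$, and let $L>M>\{0\}$ be a $B$-left chief series of $L$. Both $M$ and $L/M$ are abelian, and each is either minimal or of prime exponent, by Lemma \ref{Minimality}. By Corollary \ref{c:mintriv}, $M$ is a trivial brace of abelian type. We regard $L$ itself as a connected bisoluble skew brace of finite Morley rank.

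If the $L$-left chief length of $L$ is at most $2$, Proposition \ref{GenCas1} applied to $L$ gives that $L$ is soluble, hence strongly left soluble. Otherwise $L$ has $L$-left chief length at least $3$. Since $M$ and $L/M$ are $\B$-minimal, at least one of them must split further under the smaller group $\mathfrak L$. We then use the moreover clause of Lemma \ref{Minimality}, with $A=L$ and the chief factor $L/M$ or $M/\{0\}$.

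If $L/M$ splits, there is an infinite proper $\mathfrak L$-invariant connected subgroup $M<I<L$, so $\mathfrak L$ acts trivially on $L/M$. In particular $L\ast L\subseteq M$ and $[L,L]_+\subseteq M$, so $M$ is an ideal of $L$ by Lemma \ref{lemuseful}, since $M$ is additively normal, $\lambda$-invariant and $L\ast M\subseteq M$. Hence $L/M$ is a trivial brace. Similarly, if $M$ splits under $\mathfrak L$ (taking the chief factor $M/\{0\}$), then $\mathfrak L$ acts trivially on $M$, so $M$ is central in $L_+$ and $L\ast M=0$. In that case we still need $M$ to be an ideal of $L$ with $L/M$ an abelian brace.

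The main obstacle is the case where $M$ splits but $L/M$ is $\mathfrak L$-simple, so $L/M$ is a left chief factor of $L$ on which $\mathfrak L$ may act nontrivially. My plan is to refine $M$ into an $L$-left chief series $M=M_0>M_1>\dots>\{0\}$. Each $M_i$ is a strong left ideal of $L$; because $\mathfrak L$ acts trivially on $M$, every connected additive subgroup of $M$ is a strong left ideal of $L$. The series $L>M_0>\dots>\{0\}$ then has trivial factors $M_i/M_{i+1}$, so it suffices that $L\ast L\subseteq M$ and $[L,L]_+\subseteq M$.

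The additive part $[L,L]_+\subseteq M$ holds by abelianity of $L/M$. For $L\ast L\subseteq M$ I would argue as in Proposition \ref{GenCas1}. Suppose $\lambda^{L/M}$ is nontrivial. Since $M$ is of prime exponent in this situation, Theorem \ref{ZilberLin} gives $L/M\cong K_+$ and embeds $L_\circ/\ker$ into $K^\times$. Passing to $L/\ker$ via Lemma \ref{cosesopraleft}-type arguments should produce an isogeny between a section of $L_\circ$ of finite exponent and a section of $K^\times$. This contradicts Lemma \ref{Kequi}(ii), or Lemma \ref{Kequi}(iii) when the exponent is not finite. Hence $L\ast L\subseteq M$, and $L$ is strongly left soluble.
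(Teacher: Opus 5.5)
Your reduction is sound up to the last case. The length-$1$ case is fine, as is the use of Proposition~\ref{GenCas1} when $L$ has $L$-left chief length $2$. The case where $L/M$ splits under $\mathfrak L$ is also fine. There, condition (iv) of Lemma~\ref{lemuseful} is $M\ast L\subseteq M$, which follows from $L\ast L\subseteq M$; the condition $L\ast M\subseteq M$ you cite is just $\lambda$-invariance.

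The gap is the case you flag yourself: $M$ splits under $\mathfrak L$ but $L/M$ is $\mathfrak L$-simple. Your argument there does not produce a contradiction. The only section of $L_\circ$ of finite exponent available is $M_\circ$, which has exponent $p$ because $M$ is trivial with $M_+$ of exponent $p$. Its image in $K^\times$ under the action on $L/M$ is a connected group of exponent dividing $p$, hence simply trivial. That gives $M\ast L\subseteq M$, not an isogeny contradicting Lemma~\ref{Kequi}. The remaining quotient $L_\circ/\ker(\lambda^{L/M})$ could a priori be a divisible subgroup of $K^\times$, which Lemma~\ref{Kequi} does not exclude. Also, $\ker(\lambda^{L/M})$ is only a normal subgroup of $L_\circ$, so ``passing to $L/\ker$'' does not give a brace quotient.

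The missing step is to exploit $M\ast L\subseteq M$ directly:
\begin{enumerate}
\item By Lemma~\ref{lemuseful}, $M$ is an ideal of $L$.
\item Since $L/M$ is $\mathfrak L$-simple, the brace $L/M$ has left chief length $1$, so it is trivial by Proposition~\ref{ChiefLen1}.
\item Hence $L\ast L\subseteq M$, and $L>M>\{0\}$ is an abelian ideal series.
\end{enumerate}
More simply, in this case $M$ is a maximal connected strong left ideal of the brace $L$ with $M_+$ of finite exponent. So Lemma~\ref{BisFinCha} applied to $L$ finishes at once.

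That is exactly the paper's route, with a different case split. If both factors are minimal, $L$ has $L$-left chief length $2$ and Proposition~\ref{GenCas1} applies. Otherwise Lemma~\ref{l:finmin=minfin} moves a finite-exponent factor to the bottom and Lemma~\ref{BisFinCha} concludes. Your split via the moreover clause of Lemma~\ref{Minimality} is a legitimate alternative organisation. It only closes, however, once Lemma~\ref{BisFinCha} (or Proposition~\ref{ChiefLen1} as above) is brought in for the remaining case.
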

\begin{proof}
If the $B$-left chief length of $L$ is $1$, then $L$ is trivial of abelian type by Corollary \ref{c:mintriv}.

We now assume that the $B$-left chief length of $L$ is $2$, and consider a proper infinite connected strong $B$-left ideal $A$ of $L$. Then, by Lemma \ref{Minimality}, both $A_+$ and $L/A$ are minimal or of finite exponent. If both $A_+$ and $L/A$ are minimal, then $L$ is soluble by Proposition \ref{GenCas1}. Otherwise we may assume $A$ is of finite exponent by Lemma \ref{l:finmin=minfin}, and $L/A$ is either minimal or of finite exponent. Then $L$ is strongly left soluble by Lemma \ref{BisFinCha}.
\end{proof}

We can now prove the Main Theorem of this section.
\begin{theorem}\label{SolLA}
Let $B$ be a connected bisoluble skew brace of finite Morley rank and left chief length $3$. Then $B$ is weakly soluble.
\end{theorem}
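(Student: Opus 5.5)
Fix a left chief series $B=L_0>L_1>L_2>L_3=\{0\}$. The goal is an abelian series of skew sub-braces, i.e.\ a chain $B=A_0\ge A_1\ge\cdots\ge A_n=\{0\}$ with $[A_i,A_i]\subseteq A_{i+1}$. The natural strategy is to find a proper infinite connected \emph{ideal} $I$ of $B$, or at least a sub-brace $A$ with $[B,B]\subseteq A$, such that both pieces are covered by earlier results. Any strong left ideal of left chief length at most $2$ is strongly left soluble by Lemma \ref{NoIdeLen2}. A quotient of $B$ by an ideal of left chief length at most $2$ has left chief length at most $2$, and is soluble by Proposition \ref{GenCas1}. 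Gluing the two abelian series gives weak solubility.

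The case split uses Lemma \ref{Minimality} (each factor is abelian and either minimal or of prime exponent) together with Corollary \ref{c:bottom}, which lets us order the series so that the divisible factors lie on top and the finite exponent factors at the bottom.

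\emph{Case 1: $L_1$ has finite exponent.} Lemma \ref{BisFinCha} makes $L_1$ an ideal and gives strong left solubility directly.

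\emph{Case 2: $L_1$ is not of finite exponent but $L_2$ is.} Apply Lemma \ref{BisFinCha} inside $L_1$, or argue that the $\lambda$-action of $L_2$ on $B/L_2$ is trivial by Lemma \ref{Kequi}. Then $L_2$ is an ideal, $B/L_2$ has chief length $2$, and Proposition \ref{GenCas1} together with Lemma \ref{NoIdeLen2} finishes.

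\emph{Case 3: all three factors are minimal (divisible), hence of rank-one type.} Here Lemma \ref{cosesopraleft} gives the dichotomy: either $B'_\circ\subseteq L_1$ or $B'_\circ\circ L_1=B$.
\begin{itemize}
\item If $B'_\circ\subseteq L_1$ and, in addition, $L_1\ast B\subseteq L_1$, then $L_1$ is an ideal by Lemma \ref{lemuseful}, and we finish as above.
\item Otherwise the $\lambda$-action of $L_1$ or of $B$ on $B/L_1$ is nontrivial. Zilber's Theorem \ref{ZilberLin} then yields a field $K$ with $B/L_1\cong K_+$, and a multiplicative section isogenically embedded into $K^\times$.
\end{itemize}
In that second situation I would imitate the proof of Proposition \ref{GenCas1}. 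Study the additive conjugation action of $B$ on $L_1/L_2$ and on $L_2$, and use rank bookkeeping with Lemma \ref{Kequi} to force centrality, so that $B_+$ becomes nilpotent or abelian via Lemma \ref{lemnotbothfe}. Then consider the maps $\ast_b$ for $b\in B'_\circ$, which are well-defined homomorphisms between consecutive factors by Lemma \ref{WelDefstar}. A nonzero such map would be an isogeny between a $K_+$-type factor and a $K^\times$-type factor, contradicting Lemma \ref{Kequi}. This should give $B'_\circ\ast B\subseteq L_2$, or even $B'_\circ\ast B=\{0\}$.

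The aim of this analysis is to produce a sub-brace $A\supseteq[B,B]$ such that $B/A$ is trivial and $A$ has left chief length at most $2$. Concretely, $A$ would be the sub-brace generated by $B'_+$, $B'_\circ$ and $B\ast B$, shown to be proper of rank at most $\RM(L_1)$, and then handled by Lemma \ref{NoIdeLen2} or Proposition \ref{GenCas1} applied to $A$ as a brace in its own right. The conclusion only asks for weak solubility, so $A$ need not be normal. This is what makes the argument feasible when no ideal exists.

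The main obstacle is this last case: neither $L_1$ nor $L_2$ is an ideal, and the $\lambda$-action on the top factor is nontrivial. One must show that the mixed commutators $B\ast B$ together with $B'_\circ$ still land in a proper connected sub-brace whose own left chief length drops. I would first try to prove $B'_\circ\subseteq L_1$ using Corollary \ref{NilpDerG}, since $B'_\circ$ is nilpotent, combined with the field incompatibilities of Lemma \ref{Kequi}, distinguishing the characteristic $0$ and positive characteristic subcases via Lemma \ref{Kequi}(iii).
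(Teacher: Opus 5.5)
\paragraph{Verdict.} Your proposal has a genuine gap: the case that carries the whole theorem is not proved, and the plan for it rests on a misconception.

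\paragraph{What is fine, and what is missing.} Your reduction is correct: an infinite proper ideal suffices, via Proposition \ref{GenCas1} and Lemma \ref{NoIdeLen2}. The finite-exponent cases are also fine in outline. Case 2 needs more than Lemma \ref{Kequi}, because $B/L_2$ is not a chief factor, and Lemma \ref{BisFinCha} inside $L_1$ only makes $L_2$ an ideal of $L_1$. This is repairable: for $\ell\in L_2$, the map $x\mapsto \ell\ast x+L_2$ is a crossed homomorphism $B/L_1\to L_1/L_2$ that is additive in $\ell$ modulo $L_2$. It therefore takes values in the finite $n$-torsion of the minimal divisible group $L_1/L_2$, and vanishes by connectedness. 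The real problem is Case 3 with $L_1$ not an ideal, where you only offer a programme (``should give'', ``I would first try'').

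\paragraph{The misconception.} The sub-brace you propose, generated by $B'_+$, $B'_\circ$ and $B\ast B$, is exactly $[B,B]$. This is always an ideal of $B$ (see the lemma after Theorem \ref{t:lnp-Bnp}). Every abelian series satisfies $[B,B]\subseteq L_1$, so a weakly soluble connected $B$ automatically has the proper connected ideal $[B,B]$. You therefore cannot ``avoid normality'': proving weak solubility here \emph{is} proving that an infinite proper ideal exists.

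\paragraph{Your first step fails.} The inclusion $B'_\circ\subseteq L_1$ is equivalent to $L_1$ being an ideal, and it fails for a given chief series. Take a pure algebraically closed field $K$ of characteristic $0$. Let $E_+=K_+\times K^\times$ with $(a,s)\circ(a',s')=(a+sa',ss')$, and let $B=E\times T$ with $T$ the trivial brace on $K_+$. Then $B>\{0\}\times K^\times\times K_+>\{0\}\times K^\times\times\{0\}>\{0\}$ is a left chief series. Its factors are minimal divisible and the $\lambda$-action on the top factor is non-trivial. Yet $B'_\circ=K_+\times\{1\}\times\{0\}\not\subseteq L_1$. The ideal has to be found off the given series.

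\paragraph{The $\ast_b$ heuristic does not transfer.} Your idea of opposing $K_+$ and $K^\times$ via $\ast_b$ ($b\in B'_\circ$) does not carry over from length $2$. There, the lower factor embeds in $K^\times$ because it acts on $B/L$. In length $3$, the middle factor $L/A$ (with $L=L_1$, $A=L_2$) has no a priori multiplicative field structure.

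\paragraph{What the paper does instead.}
\begin{enumerate}
\item It assumes $B$ has no infinite proper ideal. This is also what rules out $L$, $A$ and later $D$ as ideals.
\item It takes $C\le B'_\circ$ minimal connected normal in $B_\circ$ with $C\not\le L_\circ$, and sets $S=C_B^\circ(C/(C\cap L_\circ))$ and $D=(S\cap L_\circ)^0$.
\item Then $B_\circ/S$ embeds in $F^\times$ for a field $F$ with $F_+\cong C/(C\cap L_\circ)$, and $L_\circ/D\isog B_\circ/S$. This yields the rank bounds $\RM(A)\le\RM(B/L)$ and $\RM(L/A)\le\RM(B/L)$.
\item From these it derives successively that $B_+$ is abelian, $B'_\circ\ast B\subseteq A$, $B=B'_\circ\circ A$, and $D$ is infinite and central in $B_\circ$.
\item Finally $D\ast B=\{0\}$, so $D$ is an infinite proper ideal, a contradiction. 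In the example above, $D=\{0\}\times\{1\}\times K_+$, an ideal lying on no term of the chosen series.
\end{enumerate}
Nothing in your proposal substitutes for this construction.
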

\begin{proof}
Assume for a contradiction that $B$ is not weakly soluble.

\begin{claim}\label{claimuno}
$B$ has no definable infinite proper ideals.
\end{claim}
\begin{claimproof}
Suppose, for a contradiction, that $B$ has an infinite definable proper ideal $I$. Replacing $I$ by its connected component, we may assume that $I$ is connected. As $I$ is proper and connected, $I$ has $B$-left chief length at most $2$. Moreover, $B/I$ has left chief length at most $2$ because $I$ is infinite. Then $B/I$ is soluble by Proposition \ref{GenCas1}, and $I$ is strongly left soluble by Lemma \ref{NoIdeLen2}.  Therefore, $B$ is weakly soluble, a contradiction.
\end{claimproof}  

Let $B>L>A>\{0\}$ be a left chief series of $B$. Then $B/L$, $L/A$, and $A_+$ are abelian by Lemma \ref{Minimality}, and either minimal or of finite exponent. Moreover, $A$ is trivial by Corollary \ref{c:mintriv}. The $\lambda$-action of $B$ on $B/L$ is non-trivial, as otherwise $L$ would be an ideal of $B$, contradicting Claim~\ref{claimuno}. Hence there exists an algebraically closed field $K$ such that $B/L\cong K_+$. Note that the $\lambda$-action of $B'_{\circ}$ on $B/L$ is trivial by Lemma \ref{TriActG'}.

If $B_\circ'\le L_\circ$, then $L$ is an ideal by Lemma \ref{cosesopraleft}, contradicting Claim~\ref{claimuno}. Let $C$ be a minimal definable normal connected subgroup of $B_\circ$ contained in $B_\circ'$ but not in $L_\circ$. Then $C\circ L_\circ=B$ by Lemma \ref{cosesopraleft}; moreover $B_\circ'$ is nilpotent by Lemma \ref{NilpDerG}, so $[C,B_\circ']_\circ$ is proper in $C$, definable, connected and normal in $B_\circ$, whence $[C,B_\circ']_\circ\le L_\circ$ by minimality. In particular, $C\cap L_\circ$ contains $C_\circ'$ and is normalized by $L_\circ$ and by $C$, whence by $B_\circ$. Note that $C\not\le L_\circ$ implies that the multiplicative quotient $C/(C\cap L_\circ)$ is infinite by connectivity of $C$, and irreducible for the conjugation action of $B_\circ$ by our minimal choice of $C$.

Consider the multiplicative groups $S=C_B^\circ(C/(C\cap L_\circ))$ and $D=(S\cap L_\circ)^0$. Then $[C,B_\circ']_\circ\le C\cap L_\circ$ implies $C\le B_\circ'\le S$, so $S\cap L_\circ$ is normalized by $L_\circ$, but also by $C$ because 
$$[C,(S\cap L_\circ)]_\circ\leq C\cap L_\circ\leq S\cap L_\circ.$$
Thus, $S\cap L$ is multiplicatively normal in $B=C\circ L$, as is its connected component $D$.
Moreover,
$$B_\circ/S=(S\circ L_\circ)/S\cong L_\circ/(S\cap L_\circ),$$
and $L_\circ/D$ is isogenous to $B_\circ/S$.

If $S=B$ then $[B,C]_{\circ}\le L_\circ$, so $L_\circ$ is normalized by $C$, whence by $C\circ L_\circ=B_\circ$, and $L$ is an ideal, contradicting Claim~\ref{claimuno}. Thus, $B'_\circ\le S<B_\circ$ and $B_\circ/S$ is infinite abelian by connectedness of $B$. Then $B_\circ/S$ acts faithfully by multiplicative conjugation on $C/(C\cap L_\circ)$; the action is irreducible by $B_\circ$-minimality of $C/(C\cap L_\circ)$. By Theorem \ref{ZilberLin} there is an algebraically closed field $F$ such that $B_\circ/S$ embeds into $F^\times$ and 
$C/(C\cap L_\circ)\cong F_+$. Then $L_\circ/D$ cannot have infinite subgroups of finite exponent by Lemma \ref{Kequi}. Moreover,
$$\begin{aligned}\RM(L_\circ/D)\leq\RM(F)&=\RM(C/(C\cap L_\circ))=\RM((C\circ L_\circ)/L_\circ)\\
&=\RM(B_\circ/L_\circ)=\RM(B/L)=\RM(K),\end{aligned}$$
as additive and multiplicative cosets modulo a left ideal coincide.
\begin{claim}\label{claimD}$D\not=L$ and $D\not=A$. If $\RM(B/L)\le\RM(L/A)$, then $D$ is infinite.\end{claim}
\begin{claimproof}
If $D=L$ then $L$ is multiplicatively normal, whence an ideal. Similarly, if $D=A$ then $A$ is an ideal; either case contradicts Claim~\ref{claimuno}.

For the second assertion, if $D$ is finite and $\RM(B/L)\le\RM(L/A)$, then
$$\RM(L)=\RM(L_\circ/D)\le\RM(B/L)\le\RM(L/A)=\RM(L)-\RM(A)<\RM(L)$$
since $A$ is infinite, a contradiction.
\end{claimproof}

\begin{claim}%\label{claimdue}
All chief factors are minimal divisible. Moreover, $L$ is soluble of derived length at most $2$.
\end{claim}
\begin{claimproof}
By Corollary \ref{c:bottom}, if there is at least one chief factor of finite exponent, we may assume that it is $A_+$; if there is a second one, we may assume it is $L/A$. In that case $L_+$ is of finite exponent, so $B$ is strongly left soluble by Lemma \ref{BisFinCha}, a contradiction. Therefore $B/L$ and $L/A$ must be minimal divisible.

Now, $A$ is a maximal connected strong left ideal of $L$ of additive finite exponent, so $L$ is weakly soluble and $A$ is a (trivial) ideal in $L$ of finite exponent by Lemma \ref{BisFinCha}. Therefore, $L/A$ is a trivial brace with minimal additive (and so also multiplicative) group by Proposition \ref{ChiefLen1}. Hence, $D\circ A/A$ equals either $L/A$ or $A/A$. In the first case, $L_\circ/D$ has finite exponent, and must be finite as $L_\circ/D$ has no infinite subgroups of finite exponent. By connectedness $D=L$, contradicting Claim \ref{claimD}. Therefore $D\subseteq A$, whence even $D=A$ because $A_\circ$ has finite exponent and $A_\circ/D\le L_\circ/D$, again contradicting Claim~\ref{claimD}.

It follows that all chief factors are minimal divisible. In particular $L$ has left chief length $2$ and is soluble  of derived length at most $2$ by Proposition \ref{GenCas1}.
\end{claimproof}

Consider the connected ideal $[L,L]$ of $L$. If $[L,L]=\{0\}$ then $L$ is trivial and we put $I=A$; otherwise put $I=[L,L]$.
\begin{claim}\label{c:I}
$I$ is an infinite connected proper ideal in $L$ such that $L/I$ and $I$ are trivial, minimal and of infinite exponent. Either $I=A$ or $I\cap A$ is finite; in the latter case $L=I+A=I\circ A$. If $D\not=\{0\}$ then $D$ and $L_\circ/D$ are minimal and abelian.
\end{claim}
\begin{claimproof}
If $L$ is trivial, a strong left ideal is an ideal. Otherwise $I=[L,L]$ is an ideal in $L$ by definition. The series $L>A>\{0\}$ is a $\{1\}$-composition series of $L_0$; by Lemma \ref{Jordan-Holder} the series $L>I>\{0\}$ is an equivalent one, so $L/I$ and $I$ are also minimal and of infinite exponent. If $I\not=A$ then $I\cap A$ is finite by minimality of $A$, so $I+A=L$ by minimality of $L/A$.

By triviality and minimality of $L/I$ and $I$, the series $L_\circ>I_\circ>\{0\}$ is a multiplicative $\{0\}$-composition series for $L_\circ$ with (multiplicatively) minimal factors. But $D<L_\circ$ by Claim \ref{claimD}, so if $D\not=\{0\}$, the series $L_\circ>D>\{0\}$ is also a multiplicative $\{0\}$-composition series. Then $L_\circ/D$ and $D$ are minimal by Lemma \ref{Jordan-Holder}, whence abelian.
\end{claimproof}

\begin{claim}\label{claimduebis}
$D\cap A$ is finite, $A$ isogenically embeds into $B/S$, and $\RM(A)\leq \RM(B/L)$.
\end{claim}
\begin{claimproof} If $D=\{0\}$, clearly $D\cap A$ is finite. So suppose $D\not=\{0\}$. Then
$L_\circ/D$ and $D$ are both minimal; $D\not=A$ now implies $D\cap A$ finite.
Hence $A$ embeds isogenically into $L_\circ/D$. In particular, $A$ isogenically embeds into $F^\times$, whence $\RM(A)\leq\RM(F)=\RM(B/L)$. 
\end{claimproof}

\begin{claim}\label{c:isog}
$B/L$ is not definably isogenous to $A$.
\end{claim}
\begin{claimproof}
Otherwise $\RM(F)=\RM(K)=\RM(B/L)=\RM(A)$, so the isogenous embedding of $A$ into $F^\times$ is surjective. Then $F^\times\isog A\isog B/L\isog K_+$, contradicting Lemma \ref{Kequi}.
\end{claimproof}

\begin{claim}\label{claimtre}
$A\le Z^+(B)$ and $L_+$ is abelian.
\end{claim}
\begin{claimproof}
Consider the additive conjugation action of $B$ on $A$, which is irreducible by minimality of $A$.
Put $M=C_B^+(A)^0$, a connected strong left ideal in $B$ containing $A$. If $M<B$, the quotient $B/M$ isogenically embeds into the multiplicative group $\ell^\times$ of an algebraically closed field $\ell$, while $A_+$ is isomorphic to $\ell_+$. Thus 
$$\RM(B/M)\le\RM(\ell)=\RM(A)\le\RM(B/L),$$
whence  $\RM(M)\geq\RM(L)>\RM(A)$. Therefore $B>M>A>\{0\}$ is a left chief series of $B$. 

By the proof up to Claim \ref{claimduebis} with $L$ replaced by $M$, we see that $B/M$ is isomorphic to the additive group $k_+$ of an algebraically closed field, and $A$ isogenically embeds into $k^\times$. In particular $\RM(A)\leq\RM(B/M)$, so we have equality and $\RM(B/M)=\RM(k)$. It follows that $B/M\cong k_+$ is isogenous to $\ell^\times$, contradicting Lemma~\ref{Kequi}.

The last assertion follows from Lemma \ref{lemnotbothfe} since $A\le Z^+(L)$.
\end{claimproof}

\begin{claim}\label{c:X}
If $X$ is an infinite connected proper additive subgroup of $L$, then either $X$ is $\B$-invariant or $L/A\isog A\isog X$.
\end{claim}
\begin{claimproof}
Suppose $X$ is not $\B$-invariant. Then $X$ has an automorphic image $X'\not=X$. By Lemma \ref{Jordan-Holder} all of $L/X$, $X$, $L/X'$ and $X'$ are minimal, since $L/A$ and $A$ are. Then 
$X=A$ or $X\cap A$ is finite by minimality of $A$; similarly $X'=A$ or $X'\cap A$ is finite; as $X\not=X'$ we may assume $X\cap A$ finite. Moreover $X\cap X'$ is finite by minimality of $X$, and $X+A=X+X'=L$ by minimality of $L/X$. Then 
$L/A\isog X\cong X'\isog L/X\isog A$.
\end{claimproof}

\begin{claim}\label{claimquatro}
$\RM(L/A)\leq \RM(B/L)$.
\end{claim}
\begin{claimproof}
If $I$ is not $\B$-invariant, then $L/A\isog A$ by Claim \ref{c:X}, so $\RM(L/A)=\RM(A)\le\RM(B/L)$. Otherwise $I$ is a connected $B$-left ideal, so $B>L>I>\{0\}$ is a left chief series for $B$; replacing $A$ by $I$ in the proof up to Claim~\ref{claimduebis}, we obtain $\RM(I)\le\RM(B/L)$. So if $I\cap A$ is finite, we have $L=I+A$ and $\RM(L/A)=\RM(I)\le\RM(B/L)$.

So we may assume $I=A$ is an ideal in $L$. Assume for a contradiction that $\RM(L/A)>\RM(B/L)$, and recall $\RM(B/L)\ge\RM(A)$ by Claim \ref{claimduebis}.

Suppose first that $\lambda$-action of $L$ on $A$ is non-trivial. Then
$L_\circ>\ker(\lambda^A)\ge A_\circ$ by triviality of $A$. But $L/A$ is trivial and minimal, so 
$\ker(\lambda^A)^0=A$ and $L/A$ isogenically embeds into the multiplicative group of a field whose additive group is isomorphic to $A$, contradicting $\RM(A)\le\RM(B/L)<\RM(L/A)$. It follows that $L\ast A=\{0\}$, and $L\ast L\subseteq I=A$.

Since $L_+$ is abelian by Claim \ref{claimtre}, we next consider $\ast_\ell:x+A\mapsto \ell\ast x$ for $\ell\in L$, a well-defined homomorphism from $L/A$ to $A$ by Lemma \ref{WelDefstar}. But $RM(L/A)>\RM(A)$ and $L/A$ is minimal, so $\ker(\ast_\ell)=L/A$ and $L$ must be trivial.

Now $D$ is infinite by Claim~\ref{claimD}, whence a minimal sub-brace by Claim \ref{c:I} and triviality of $L$. If $D$ were $\B$-invariant, it would be an ideal, contradicting Claim \ref{claimuno}. So $L/A\isog A$ by Claim \ref{c:X}, and $\RM(L/A)=\RM(A)\le\RM(B/L)$.
\end{claimproof}

\begin{claim}\label{claimcinque}
$B_+$ is abelian.
\end{claim}
\begin{claimproof}
Let us show first that $B/A$ is abelian. Consider the additive conjugation action of $B$ on $L/A$. It is irreducible by minimality of $L/A$.
Now $L\le C_B^+(L/A)$ as $L/A$ is abelian; if $C_B^+(L/A)<B$, then $C_B^+(L/A)^0=L$ by minimality of $B/L$, so $B/L$ embeds isogenically into the multiplicative group $k^\times$ of an algebraically closed field $k$ with $k_+\cong L/A$. But $\RM(k)=\RM(L/A)\le\RM(B/L)$ by Claim~\ref{claimquatro}, so $B/L$ and $k^\times$ are isogenous. On the other hand, $B/L$ is isogenous to $K_+$, contradicting Lemma \ref{Kequi}. It follows that $L/A\le Z^+(B/A)$, so $B/A$ is abelian by Lemma \ref{lemnotbothfe}.

Now $A\le Z^+(B)$ and both $B/A$ and $L_+$ are abelian by the first paragraph and Claim~\ref{claimtre}, so for any $\ell\in L$, the map $x+L\mapsto [\ell,x]_+$ is an additive homomorphism from $B/L$ to $A$. If it is non-trivial, it is an isogeny by minimality, contradicting Claim \ref{c:isog}. Hence $L\le Z^+(B)$.
	
Finally, for $b\in B$ the map $x+L\mapsto [b,x]_+$ is an additive homomorphism from $B/L$ to $A$, which cannot be an isogeny and must be trivial again. Thus, $B_+$ is abelian.
\end{claimproof}

\begin{claim}\label{c:DLab}
	If $D$ is infinite, $L_\circ$ is abelian.
\end{claim}
\begin{claimproof}
By minimality either $D=I$ or $D\cap I$ is finite. In the first case, since $I$ cannot be an ideal by Claim \ref{claimuno}, we have $L/A\isog A \isog I$ by Claim \ref{c:X}. Consider the multiplicative conjugation action of $L_\circ$ on $D$. It is irreducible by minimality, so if it is non-trivial, $D=I_\circ=I_+\isog A_+$ is isomorphic to the additive group of an algebraically closed field. The kernel of the action must be a finite extension of $D$ by minimality of $L_\circ/D$, so $A_+=A_\circ\isog L_\circ/D$ embeds isogenically into the multiplicative group of the field; as $\RM(A)=\RM(I)$ the isogenic embedding is surjective, contradicting Lemma \ref{Kequi}. It follows that $D\le Z^\circ(L)$, so $L_\circ$ is abelian by Lemma \ref{lemnotbothfe}.
	
On the other hand, if $D\cap I$ is finite, $L_\circ$ is the product of two connected abelian normal subgroups with finite intersection. Then $[D,I]\le D\cap I$ is connected and finite, whence trivial. Thus, in either case $L_\circ$ is abelian.
\end{claimproof}

\begin{claim}\label{claimsei}
$B'_{\circ}\ast B\subseteq A$.
\end{claim}
\begin{claimproof}$B_+$ is abelian by Claim \ref{claimcinque}; moreover $B_\circ'\ast B\subseteq L$ and $B_\circ'\ast L\subseteq A$ as $B_\circ'$ acts trivially on chief factors by Lemma \ref{Minimality}. Hence for any $b\in B_\circ'$ the map
$$\ast_b:x+L\mapsto b\ast x+A$$
is a homomorphism from $B/L$ to $L/A$ by Lemma \ref{WelDefstar}. Suppose for a contradiction that it is non-trivial. By minimality $\ast_b$ is an isogeny and $\RM(B/L)=\RM(L/A)$, so $D$ is infinite and minimal by Claims \ref{claimD} and \ref{c:I}. Then $L_\circ$ is abelian by Claim \ref{c:DLab}.

Since $L_+$ is abelian and $I\le L$ is a trivial ideal with $L\ast L\subseteq I$, the map $\ast_i:x+I\mapsto i\ast x$ is a well-defined homomorphism from $L/I$ to $I$ for every $i\in I$. Suppose $i\in I$ is such that $\ast_i$ is non-trivial. By minimality it is an isogeny, so $I\isog L/I$. By Lemma \ref{Jordan-Holder} this implies $A\isog L/A\isog B/L$, contradicting Claim \ref{c:isog}. Hence, $I\ast L=\{0\}$; by commutativity of $L_+$ and $L_\circ$ this implies $L\ast I=\{0\}$.

But now for $\ell\in L$ the map $\ast_\ell:x+I\mapsto \ell\ast x$ is a well-defined homomorphism from $L/I$ to $I$, which must be trivial for the same reason as $\ast_i$. It follows that $L$ is a trivial brace, and $D$ is a sub-brace; it is trivial, and minimal by Claim \ref{c:I}. But then $L/A\isog A$ by Claims \ref{c:X} and \ref{claimuno}, and $A\isog B/L$, again contradicting Claim \ref{c:isog}.

This contradiction shows that all the homomorphisms $\ast_b$ for $b\in B_\circ'$ are trivial, so $B'_{\circ}\ast B\subseteq A$.
\end{claimproof}

\begin{claim}\label{claimsette}
$B'_{\circ}\circ A=B$, $B\ast A=\{0\}$, and $D=(B_\circ'\cap L_\circ)^0$ is infinite.
\end{claim}
\begin{claimproof}Recall that $B_+$ is abelian by Claim~\ref{claimcinque}.
Let $b,b'\in B_\circ'$. As $-b+b\circ b'-b'=b\ast b'\in A$ by Claim \ref{claimsei}, we have
$$b\circ b'\in b+b'+A,$$
whence $b+b'\in B_\circ'+A$. Hence $B_\circ'\circ A=B_\circ'+A$ is a sub-brace of $B$. Now, if $b'\in B'_{\circ}$ and $b\in B$, then by multiplicative normality there is $b''\in B_\circ'$ with $b\circ b'=b''\circ b$, so
$$\lambda_b(b')=-b+b\circ b'=-b+b''\circ b=-b+b''+b''\ast b+b\in B_\circ'+A,$$
since $b''\ast b\in A$. It follows that $B_\circ'+A$ is a strong left ideal in $B$. But it contains $B_\circ'$ and must be multiplicatively normal, i.e.\ an ideal. So $B_\circ'+A=B_\circ'\circ A=B$ by Claim~\ref{claimuno}.

As the $\lambda$-actions of $B_\circ'$ and of $A$ on $A$ are both trivial, $B\ast A=\{0\}$.

Finally, $B=B_\circ'\circ A$ implies $L=(B'_\circ\cap L_\circ)^0\circ A$. Since $B_\circ'\le S$, we have $(B'_\circ\cap L_\circ)^0\le (S\cap L_\circ)^0=D$, and we must have equality because $A\cap D$ is finite. So
$$\RM(D)=\RM(B'_\circ\cap L_\circ)=\RM(L)-\RM(A)>0$$
and $D$ is infinite.
\end{claimproof}

Since $D$ is infinite, $L_\circ$ is abelian by Claim \ref{c:DLab}; moreover $D\le Z^\circ(B'_\circ)$ by Lemma \ref{center}; as $B=B_\circ'\circ A$, we even have $D\leq Z^\circ(B)$.

Now $D\ast A=\{0\}$ by Claim~\ref{claimsette} and $D\ast B\subseteq A$ by Claim~\ref{claimsei}; as $B_+$ is abelian by Claim~\ref{claimcinque}, for $d\in D$ the map
$$\ast_d:x+A\mapsto d\ast x$$
is a well-defined homomorphism from $B/A$ to $A$. Suppose it were non-trivial, and consider $b\in B$ and $x\in\ker(\ast_d)$. Then
$$\lambda_d\lambda_b(x)=\lambda_b\lambda_d(x)=\lambda_b(x),$$
so $\ker(\ast_d)$ is $\lambda$-invariant, whence a strong left ideal. Put $U=\ker(\ast_d)^0$, a proper connected strong left ideal of $B$. Since
$$\RM(U)\geq \RM(B)-\RM(A)>\RM(B)-\RM(L)=\RM(B/L)\geq \RM(A),$$
$U$ properly contains $A$. Then $B>U>A>\{0\}$ is a connected left chief series of $B$ such that $B/U$ is isogenous to $A$. Replacing $L$ by $U$, this contradicts Claim \ref{c:isog}.

It follows that $D\ast B=\{0\}$, so $D$ is a trivial sub-brace.   
Since $D$ is multiplicatively central and $B_+$ is abelian, $D$ is a proper infinite ideal by Lemma \ref{lemuseful}, and we have reached the final contradiction.
\end{proof}

\begin{corollary}
Let $B$ be a connected skew brace of Morley rank at most $3$. Then $B$ is weakly soluble if and only if $B$ is bisoluble.
\end{corollary}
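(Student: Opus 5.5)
The statement has two directions; I would treat the easy direction (weakly soluble implies bisoluble) first and spend the real effort on the converse.

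\emph{Weakly soluble implies bisoluble.} Let $B=L_0\ge L_1\ge\cdots\ge L_n=\{0\}$ be an abelian series of skew sub-braces, so $[L_i,L_i]\subseteq L_{i+1}$. Since $[L_i,L_i]$ is the sub-brace generated by $[L_i,L_i]_+$ and $L_i\ast L_i$, it contains the additive derived group $(L_i)_+'$. It also contains $L_i\ast L_i$, so on $L_i$ we have $a\circ b\equiv a+b$ modulo $L_{i+1}$. I would check that this forces $(L_i)_\circ'\subseteq L_{i+1}$, using the ``moreover'' argument of the lemma on $[B,L]$ with $I=L=L_i$. By that lemma, $[L_i,L_i]\supseteq[L_i,L_i]_\circ$. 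Hence $L_{i+1}$ contains both derived groups of $L_i$. This gives soluble subnormal series for both $B_+$ and $B_\circ$, with $L_{i+1}$ normal in $L_i$ additively and multiplicatively because it contains the derived subgroups. So $B$ is bisoluble; no rank hypothesis is needed.

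\emph{Bisoluble implies weakly soluble.} Let $B$ be connected of Morley rank at most $3$. The plan is to reduce to the left chief length.
\begin{itemize}
\item Every left chief factor is infinite, because the terms of a left chief series are connected and strictly decreasing. So the left chief length is at most $\RM(B)\le 3$.
\item If $B$ is trivial (rank $0$), it is clearly weakly soluble.
\item If the left chief length is at most $2$, Proposition~\ref{GenCas1} gives that $B$ is soluble, hence weakly soluble.
\item If the left chief length is $3$, Theorem~\ref{SolLA} applies directly.
\end{itemize}

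The only subtle point is the first direction, specifically that an abelian series in the brace sense makes $L_{i+1}$ contain the multiplicative commutators of $L_i$. This is handled by the congruence $a\circ b\equiv a+b$ modulo $L_{i+1}$. One point I would need to confirm is that $L_{i+1}$ is normal in $L_i$; this follows because $L_{i+1}$ contains the derived groups, so any subgroup containing the commutators is normal.
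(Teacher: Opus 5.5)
Your proof is correct and follows the paper's approach: the paper simply combines Corollary~\ref{mrk1}, Corollary~\ref{rank2soluble} and Theorem~\ref{SolLA}. Your case split by left chief length (bounded by $\RM(B)\le 3$), handled via Proposition~\ref{GenCas1} and Theorem~\ref{SolLA}, is the same reduction; it is slightly more careful, since it explicitly covers rank-$3$ braces of left chief length at most $2$, and it also supplies the easy direction (weakly soluble implies bisoluble), which the paper leaves implicit. In that direction, the ``moreover'' part of the $[B,L]$ lemma must be applied with $L_i$ itself as the ambient brace, since $L_i$ need not be a strong left ideal of $B$, and it is valid there.
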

\begin{proof}
This follows from Corollary \ref{mrk1}, Corollary \ref{rank2soluble}, and Theorem \ref{SolLA}.
\end{proof}

\section{Binilpotent skew braces of finite Morley rank}\label{sec:5}

In this section we deal with binilpotent skew braces of finite Morley rank. We shall first see that they decompose into an almost direct product of bi-$p$ ideals of finite exponent and radicable ideals.
\begin{theorem}\label{t:decomposition} Let $B$ be a bi-nilpotent connected skew brace of finite Morley rank. Then there are finitely many ideals $N_p$ for different primes $p$, and an ideal $D$, such that:\begin{itemize}
\item Every $N_p$ is bi-$p$ of finite exponent,
\item $D$ is radicable,
\item the sum of the $N_p$ is direct, and
\item $D\cap\bigoplus_p N_p$ is finite.
\end{itemize}\end{theorem}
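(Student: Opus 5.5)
We apply Theorem \ref{t:nesin} to the additive group and to the multiplicative group separately. Both are connected nilpotent groups of finite Morley rank, so $B_+=D_+\star F_+$ and $B_\circ=D_\circ\star F_\circ$. Here $D_+,D_\circ$ are radicable, $F_+,F_\circ$ have finite exponent, and all four are definable, connected and characteristic in the respective group. A nilpotent group of finite exponent is the direct product of its Sylow subgroups, so we can write $F_+=\bigoplus_p F_{+,p}$ and $F_\circ=\prod_p F_{\circ,p}$, each factor being characteristic and connected. The aim is to show that these additive and multiplicative pieces coincide up to the appropriate connected components. We would then set $N_p:=F_{+,p}=F_{\circ,p}$ and $D:=D_+=D_\circ$.

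\emph{Step 1: the finite-exponent part.} Let $n$ be the exponent of $F_+$. Then $F_+=\{x:nx=0\}^0$ in the sense of Theorem \ref{t:nesin}, i.e. it is the largest connected definable additive subgroup of finite exponent. Every $\lambda_b$ is an additive automorphism, so $F_+$ and each $F_{+,p}$ are $\lambda$-invariant. Being additively characteristic, they are therefore strong left ideals, and in particular sub-braces. Hence $F_{+,p}$ is a connected skew sub-brace whose additive group has exponent a power of $p$.

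We next need the multiplicative group of $F_{+,p}$ to be a $p$-group as well. Here we would use that $F_{+,p}$, as a connected strong left ideal of $B$, is itself binilpotent. I would argue by induction on Morley rank, or directly via a left chief series of the brace $F_{+,p}$. Lemma \ref{BisFinCha}, applied to a maximal connected strong left ideal of $F_{+,p}$ (whose additive group has finite exponent), shows that $F_{+,p}$ is strongly left soluble. Remark \ref{r:ws-fe} then gives that $(F_{+,p})_\circ$ has exponent a power of $p$. Consequently $F_{+,p}\le F_{\circ,p}$. By the symmetric argument with the roles of $+$ and $\circ$ exchanged (Lemma \ref{BisFinCha} allows either group to have finite exponent), $F_{\circ,p}$ is $\lambda$-invariant. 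This point needs care: $F_{\circ,p}$ is multiplicatively characteristic, but $\lambda$-invariance is not automatic. I would prove it by showing $F_{\circ,p}$ is a connected strong left ideal. The $\lambda$-action of $F_{\circ,p}$ on the factors of a left chief series of $B$ is by a finite-exponent group, hence trivial by Lemma \ref{Kequi}(ii) (or by Lemma \ref{BLL_1} once left nilpotency is available). So $F_{\circ,p}$ is contained in the subgroup generated additively, and we obtain $F_{\circ,p}\le F_{+,p}$ by symmetry. Equality follows, and $N_p$ is both additively and multiplicatively characteristic and $\lambda$-invariant. Lemma \ref{lemuseful} then makes $N_p$ an ideal; it is bi-$p$ of finite exponent, and the sum over $p$ is direct.

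\emph{Step 2: the radicable part.} Set $D:=D_+$. It is additively characteristic, hence a strong left ideal and a connected sub-brace. Its multiplicative group is nilpotent and connected. If it contained an infinite subgroup of finite exponent, then its $F$-part would be a connected sub-brace of finite multiplicative exponent, hence of finite additive exponent by Step 1 applied inside $D$. By the Corollary after Theorem \ref{sylow}, this contradicts the radicability of $D_+$. So $D_\circ$ is radicable and $D\le D_\circ$; by rank counting ($\RM(B)=\RM(D_+)+\RM(F_+)=\RM(D_\circ)+\RM(F_\circ)$ with $F_+=F_\circ$) we get $D=D_\circ$. Therefore $D$ is multiplicatively normal, and it is an ideal by Lemma \ref{lemuseful}. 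Finiteness of $D\cap\bigoplus_pN_p$ is the finiteness of $D_+\cap F_+$ from Theorem \ref{t:nesin}.

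\emph{Main obstacle.} The delicate step is showing that the multiplicatively defined pieces $F_{\circ,p}$ and $D_\circ$ are $\lambda$-invariant, or equivalently that they coincide with their additive counterparts. My plan is to route everything through the additive, automatically $\lambda$-invariant, objects and then use Remark \ref{r:ws-fe} together with rank counting. The crux is verifying that a connected strong left ideal with finite additive exponent in a binilpotent brace is weakly soluble, so that exponents transfer between the two groups. For this I would rely on Lemma \ref{BisFinCha}, with induction on rank.
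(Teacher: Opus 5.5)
\textbf{There is a genuine gap: the reverse inclusion $F_{\circ,p}\le F_{+,p}$ is never proved.}

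The first half of your Step~1 is fine and matches the paper. $F_{+,p}$ is a connected strong left ideal, hence strongly left soluble (most directly by Corollary~\ref{lemsolrk3}). Remark~\ref{r:ws-fe} then gives $F_{+,p}\le F_{\circ,p}$. The trouble is the other direction.

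There is no symmetry between $+$ and $\circ$ here. An additively characteristic subgroup is automatically $\lambda$-invariant, because each $\lambda_b$ lies in $\aut(B_+)$. Nothing comparable makes a multiplicatively characteristic subgroup an additive subgroup. Lemma~\ref{BisFinCha} is symmetric only in the exponent hypothesis; it still needs $L$ to be a strong left ideal.

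Your proposed fix does not supply this. It is true that $F_{\circ,p}$ acts trivially on every left chief factor, i.e.\ $F_{\circ,p}\ast L_i\subseteq L_{i+1}$. But that controls $\lambda_x$ for $x\in F_{\circ,p}$. $\lambda$-invariance instead asks for $\lambda_b(F_{\circ,p})\subseteq F_{\circ,p}$ for arbitrary $b\in B$, which is a statement about the other argument of $\ast$. The phrase ``contained in the subgroup generated additively'' yields nothing usable.

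The same unproved assumption reappears in Step~2, where you treat the finite-exponent part of $(D_+,\circ)$ as a sub-brace. So the step you call the crux (finite additive exponent forces weak solubility) is the easy direction. The real difficulty is ruling out infinite multiplicative $p$-subgroups of bounded exponent inside the additive radicable part.

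\textbf{How the paper closes it.} It stays on the $\lambda$-invariant side throughout.
\begin{enumerate}
\item Put $D_p=D_++\bigoplus_{q\neq p}F_{+,q}$. This is a strong left ideal, and $B=D_p\circ F_{+,p}$ as sets.
\item By Corollary~\ref{lemsolrk3}, $D_p$ has a series $D_p=I_0>\cdots>I_k=\{0\}$ with $I_{i+1}$ an ideal of $I_i$ and trivial abelian factors.
\item A Sylow $p$-subgroup of $(D_p)_+$ is one of $D_+$, hence a finite extension of finitely many Pr\"ufer groups (Theorem~\ref{sylow}). By Lemma~\ref{SylowLifting}, every $p$-subgroup of bounded exponent of $(I_i/I_{i+1})_+$ is finite. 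Since each factor is a trivial brace, the same holds in $(I_i/I_{i+1})_\circ$.
\item Therefore $F_{\circ,p}\cap D_p$ is finite. Since $F_{\circ,p}=(F_{\circ,p}\cap D_p)\circ F_{+,p}$ (Dedekind, using $F_{+,p}\le F_{\circ,p}$), connectedness gives $F_{\circ,p}=F_{+,p}$, for every prime $p$.
\end{enumerate}

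Only now is $F:=F_+=F_\circ$ known to be an ideal. The radicable part then follows without any sub-brace assumption. Write $D_\circ$ for the radicable part of $B_\circ$, as in your notation. Then $(D_+,\circ)/(D_+\cap F)\cong B_\circ/F\cong D_\circ/(D_\circ\cap F)$. So $(D_+,\circ)$ is isogenous to a radicable group and has no infinite subgroup of finite exponent. Hence it lies in $D_\circ$ by connectedness, and your rank count finishes the argument.
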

\begin{proof}
Since $B_+$ is connected nilpotent, by \cite[Theorem 6.8]{borovik} it is the central sum of a characteristic connected group $N$ of bounded exponent and a characteristic connected radicable group $D$, such that $N\cap D$ is finite. Moreover, $N$ is a direct sum of finitely many connected characteristic $p$-groups $N_p$ of bounded exponent. As $D$ and all the $N_p$ are characteristic, they are strong left ideals in $B$. Then $B=D+\bigoplus_p N_p$; moreover $(N_p)_\circ$ is also a $p$-group of finite exponent for all $p$ by Remark \ref{r:ws-fe}.
	
Similarly, $B_\circ$ is connected and nilpotent, and the central product of characteristic connected groups $M$ and $C$, where $M$ has finite exponent and $C$ is radicable, and $M=\circledcirc_p M_p$ is a direct product of $p$-groups $M_p$ of bounded exponent, for certain $p$ (a priori not necessarily the same as in the previous paragraph). Then $B_\circ/M_p$ has only finite $p$-subgroups of bounded exponent, whence $N_p\le M_p$ by connectedness.
	
Put $D_p=D+\bigoplus_{q\not=p}N_q=D\circ\circledcirc_{q\not=p}N_q$, again a strong left ideal with $B=D_p+N_p=D_p\circ N_p$. Then $D_p$ is strongly left soluble by Corollary \ref{lemsolrk3}, and has a finite series $D_p=I_0>I_1>\cdots>I_k=\{0\}$ such that $I_{i+1}$ is an ideal in $I_i$ for all $i<k$ and the quotients $I_i/I_{i+1}$ are abelian trivial. Note that a Sylow $p$-subgroup $P$ of $D_p$ is a Sylow $p$-subgroup of $D$, and hence equals a finite extension of a finite sum of Pr\"ufer $p$-groups by Theorem \ref{sylow}. Hence any section of $P$ of bounded exponent must be finite. By Lemma \ref{SylowLifting} any additive $p$-subgroup of $I_i/I_{i+1}$ of bounded exponent is finite; by triviality of $I_i/I_{i+1}$ any multiplicative $p$-subgroup of $(I_i/I_{i+1})_\circ$ of bounded exponent is finite. Therefore $\big((M_p\cap I_i)/(M_p\cap I_{i+1})\big)_\circ$
must be finite for all $i<k$, so $M_p\cap D_p$ is finite. As $B=D_p\circ N_p$ and $N_p\le M_p$, we have $N_p=M_p$ by connectedness, so this is an ideal in $B$, for all~$p$. In particular $N=M$ is an ideal in $B$.
	
Now
$$D_\circ/(N_\circ\cap D_\circ)\cong (D_\circ \circ N_\circ)/N_\circ=B_\circ/N_\circ=(C\circ M)/M\cong C/(M\cap C).$$
So $D_\circ$ is isogenous to $C$ and cannot have an infinite subgroup of finite exponent.
Then $D_\circ/(D_\circ\cap C)\cong (D_\circ \circ C)/C\le B_\circ/C$ has finite exponent and must be finite, whence $D_\circ\le C$ by connectedness. Since $D\cap N=D\cap M$ and $C\cap N$ are both finite,
$$\RM(D)=\RM(B/N)=\RM(B)-\RM(N)=\RM(B_\circ)-\RM(M)=\RM(B_\circ/M)=\RM(C),$$
we have $D=C$ by connectedness. Thus $D$ is an ideal as well.
\end{proof}

\begin{definition}
Let $G$ be a connected $\Sigma$-group of finite Morley rank, and $X$ a connected definable group. $G$ is called {\em $X$-$\Sigma$-homogeneous} if it has a $\Sigma$-composition series whose quotients are all definably isogenous to $X$. It is $\Sigma$-homogeneous if it is $X$-$\Sigma$-homogeneous for some $X$.
The $X$-$\Sigma$-homogeneous component is the maximal connected $X$-$\Sigma$-homogeneous normal subgroup of $G$.
\end{definition}

This definition is reminiscent of Frécon's definition of homogenous subgroups and components \cite{freconhomogeneous}, but we have not worked out the precise connection.

\begin{remark}\label{r:series}\begin{itemize}
\item As any two $\Sigma$-composition series of $G$ are equivalent, $X$-$\Sigma$-homogeneity does not depend on the choice of $\Sigma$-composition series.
\item Any connected subnormal $\Sigma$-subgroup $H$ of a connected $X$-$\Sigma$-homogeneous $\Sigma$-group $G$ is still $X$-$\Sigma$-homogeneous, since $H$ is a term in some $\Sigma$-composition series of $G$.
\item If $H$ and $H^*$ are mutually normalizing $X$-$\Sigma$-homogeneous subgroups with $\Sigma$-composition series 
$$\begin{aligned}H&=H_0>H_1>\cdots>H_m=\{0\}\quad\mbox{and}\\ H^*&=H^*_0>H^*_1>\cdots>H^*_n=\{0\},\end{aligned}$$
respectively, then
$$HH^*=HH^*_0\ge HH^*_1\ge\cdots\ge HH^*_n=H_0>H_1>\cdots>H_m=\{0\}$$
is a $\Sigma$-composition series for $HH^*$ (possibly with repetitions), since by $\Sigma$-simplicity $(H\cap H^*_i)H^*_{i+1}$ is either equal to $H^*_i$ or a finite extension of $H^*_{i+1}$, which means that the quotient $HH^*_i/HH^*_{i+1}\cong H^*_i/(H\cap H^*_i)H^*_{i+1}$ is either trivial or isogenous to $H^*_i/H^*_{i+1}\isog X$. It follows that $HH^*$ is still $X$-$\Sigma$-homogeneous, and an $X$-$\Sigma$-homogeneous component exists.
\item If $X$ and $X^*$ are not isogenous, then an $X$-$\Sigma$-homogeneous component $H$ and an $X^*$-$\Sigma$-homogeneous component $H^*$ have finite intersection, as $(H\cap H^*)^0$ is a connected $\Sigma$-subgroup which is both $X$- and $X^*$-$\Sigma$-homogeneous.
\end{itemize}\end{remark}

\begin{lemma}\label{l:Xhom}
Let $G$ be a connected nilpotent group of finite Morley rank, and $H\le G$ a connected  $X$-$\Sigma$-homogeneous subgroup for some definable connected group $X$. Suppose $\Sigma$ is closed under conjugation by inner automorphisms. Then $\prod_{g\in G}H^g$ is definable, normal in $G$, and $X$-$\Sigma$-homogeneous.\end{lemma}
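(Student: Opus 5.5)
We prove the lemma by induction on the nilpotency class of $G$ acting on $H$, using normalizer towers. Concretely, we define $H_0=H$ and $H_{k+1}=\langle H_k^g : g\in N_G(H_k)\ldots\rangle$. A cleaner version is the following. Let $N=N_G(H)$; this is a definable subgroup of $G$. If $N=G$, then $H$ is already normal and there is nothing to prove. Otherwise we proceed in three steps.

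\emph{Step 1: conjugates of $H$ are homogeneous.} For $g\in G$, conjugation by $g$ maps $\Sigma$-subgroups to $\Sigma$-subgroups, since $\Sigma$ is closed under conjugation by inner automorphisms. It therefore carries a $\Sigma$-composition series of $H$ to one of $H^g$ with definably isomorphic factors. Hence every $H^g$ is $X$-$\Sigma$-homogeneous.

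\emph{Step 2: pairwise products inside a normalizer.} If $H^g$ and $H^{g'}$ normalize each other, then by the third item of Remark \ref{r:series} their product is again connected, definable and $X$-$\Sigma$-homogeneous. Moreover, any product of finitely many conjugates is definable and connected, being a product of definable connected subgroups, as is $H^GH$ when these groups normalize one another.

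\emph{Step 3: ascending chain via the normaliser condition.} Put $K_0=H$ and let $K_{i+1}$ be the subgroup generated by the conjugates $K_i^g$ for $g\in N_G(N_G(K_i))$. We use the standard nilpotent-group argument instead: in a nilpotent group, $H$ is subnormal, say
$$H=S_0\unlhd S_1\unlhd\cdots\unlhd S_r=G,$$
where $S_{j+1}=N_G(S_j)$ can be taken definable. We prove by induction on $j$ that $T_j:=\prod_{g\in S_j}H^g$ is definable, normal in $S_j$, connected, and $X$-$\Sigma$-homogeneous.

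For the inductive step, conjugates $T_j^s$ with $s\in S_{j+1}$ are again normal in $S_j$, because $S_j\unlhd S_{j+1}$. So they normalize each other, and they are $X$-$\Sigma$-homogeneous by Step 1. By Remark \ref{r:series}, every finite product of them is $X$-$\Sigma$-homogeneous, connected and definable. The Morley rank is bounded by $\RM(G)$, so a product of finitely many of these conjugates has maximal rank; by connectedness it then contains every further conjugate, and hence equals $T_{j+1}$. This gives definability. Normality of $T_{j+1}$ in $S_{j+1}$ is clear, since it is generated by a conjugation-invariant family. At $j=r$ we obtain the claim.

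\emph{Main obstacle.} The main obstacle is that the normalizer chain must reach $G$ after finitely many steps and consist of definable groups. Nilpotency of $G$ gives subnormality of every subgroup, with defect at most the class; we can use, for instance, $S_j=HZ_j(G)$. These groups are definable when the $Z_j(G)$ are, and each $Z_j(G)$ is definable as an iterated centre in a group of finite Morley rank. Moreover $HZ_j(G)\unlhd HZ_{j+1}(G)$, because $[Z_{j+1}(G),H]\le Z_j(G)$.

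A second, subtler point is that homogeneity is measured with respect to $\Sigma$-composition series. Each $T_j$ is a $\Sigma$-subgroup, because $\Sigma$ permutes the conjugates $H^g$: for $\sigma\in\Sigma$ we have $\sigma(H^g)=\sigma(H)^{\sigma(g)}=H^{\sigma(g)}$. The products in Remark \ref{r:series} require the factors to be mutually normalizing $\Sigma$-subgroups. That hypothesis holds here, since all the $T_j^s$ are normal in $S_j$ and are $\Sigma$-invariant by the same computation.
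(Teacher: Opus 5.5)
Your argument is correct, but it organises the construction differently from the paper. Within each stage the two proofs do the same thing: take a family of mutually normalising conjugates, apply Remark \ref{r:series} to their finite products, and use a product of maximal rank to get definability. The difference is how the stages are organised.

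The paper never fixes a subnormal chain in advance. It replaces $H$ by $K=\prod_{g\in N_G(N_G(H))}H^g$, using that all conjugates $H^g$ with $g\in N_G(N_G(H))$ have the same normaliser $N_G(H)$ and hence normalise one another. It then uses connectedness of $G$ (implicitly, so that $N_G(H)$ has infinite index when $H\not\unlhd G$) and the normaliser condition, Lemma \ref{l:NC}, to get $\RM(N_G(K))>\RM(N_G(H))$. Iterating, the process stops after at most $\RM(G)$ steps.

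You instead fix the explicit chain $S_j=HZ_j(G)$, with $S_j\unlhd S_{j+1}$ because $[Z_{j+1}(G),H]\le Z_j(G)$. You then show inductively that the normal closure $T_j$ of $H$ in $S_j$ is a finite product of conjugates of the previous stage. Mutual normalisation is automatic, since every $T_j^s$ with $s\in S_{j+1}$ is normal in $S_j$. The equality $\prod_{s\in S_{j+1}}T_j^s=T_{j+1}$ holds because $S_jS_{j+1}=S_{j+1}$.

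Your route is more elementary. It needs neither Lemma \ref{l:NC} nor connectedness of $G$, and not even definability of the $S_j$, since definability of $T_{j+1}$ comes from the maximal-rank finite product. The number of stages is bounded by the nilpotency class. The paper's route avoids the upper central series and stays with normalisers and rank, which is the more intrinsic finite-Morley-rank argument.

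For a clean write-up, delete the abandoned constructions ($H_{k+1}$, $K_{i+1}$, and $S_{j+1}=N_G(S_j)$). For $\Sigma$-invariance of $T_j$, just note that each $H^g$ is already a $\Sigma$-subgroup by your Step 1. The formula $\sigma(H^g)=H^{\sigma(g)}$ alone gives $\sigma(T_j)=T_j$ only once you also know $\sigma(S_j)=S_j$, which does hold because $Z_j(G)$ is characteristic.
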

\begin{proof}
We shall show first that $K:=\prod_{g\in N_G(N_G(H))}H^g$ is again a $X$-$\Sigma$-homogeneous subgroup.
As $\Sigma$ is closed under conjugation by inner automorphisms, all conjugates $H^g$ for $g\in N_G(N_G(H))$ are still $\Sigma$-subgroups, and have the same normalizer $N_G(H)$. In particular they normalize one another, so all finite products $HH^{g_1}\cdots H^{g_n}$ are definable connected $\Sigma$-subgroups; by the finiteness of Morley rank and connectedness there is a maximal such product, which must equal $K$. But a finite product of pairwise normalizing $X$-$\Sigma$-homogeneous subgroups is itself $X$-$\Sigma$-homogeneous by Remark \ref{r:series}.

Now $N_G(K)\ge N_G(N_G(H))$, and unless $H$ is already normal, $|N_G(N_G(H)):N_G(H)|$ is infinite by the normalizer condition, Lemma \ref{l:NC}. Hence $\RM(N_G(K))>\RM(N_G(H))$ and we iterate the construction; by finiteness of $\RM(G)$ we must stop. But at the last iteration we have obtained an $X$-$\Sigma$-homogenous subgroup which is a product of conjugates of $H$ and normal in $G$; by normality it contains all conjugates $H^g$ for $g\in G$, and hence equals $\prod_{g\in G}H^g$.
\end{proof}

We now consider homogeneity in the context of skew braces.
\begin{definition}
Let $B$ be a connected skew brace of finite Morley rank, and $L$ a connected strong left ideal of $B$. If $X$ is a definable connected group, $L$ is called {\em $X$-homogeneous} if it is $X$-$\B$-homogeneous, and {\em homogeneous} if it is $X$-homogeneous for some $X$. The {\em $X$-homogeneous component} of $B$ is the $X$-$\B$-homogeneous component of $B_+$, i.e.\ the maximal connected $X$-homogeneous strong $B$-left ideal.
\end{definition}

\begin{proposition}\label{p:homogeneous}
Let $B$ be a connected radicable binilpotent skew brace of finite Morley rank, and $A$ a homogeneous component of $B$. Then $A$ is an ideal in $B$. In particular a minimal (infinite) ideal is homogeneous.\end{proposition}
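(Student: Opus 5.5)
We will show that $A$ is multiplicatively normal in $B$. Since $A$ is by definition a strong left ideal, Lemma \ref{NormalG} then makes it an ideal.

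\emph{Setup.} Let $A$ be the $X$-homogeneous component. It is the maximal connected $X$-homogeneous strong left ideal, and it exists by Remark \ref{r:series}. By Theorem \ref{t:lnp-Bnp} we cannot assume $\B$ is nilpotent, since left nilpotency is what we are ultimately after. We do, however, have nilpotency of $B_\circ$ and of $B_+$.

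\emph{Main step.} For $b\in B$ we study the multiplicative conjugate $A^b:=b^{-1}\circ A\circ b$. First we check that $A^b$ is again a connected strong left ideal. Since $\lambda$ is a homomorphism and $A$ is $\lambda$-invariant, $\lambda_c(A^b)=\lambda_{c}(\lambda_{b^{-1}}(\cdots))$ should be computable. More concretely, because $A$ is $\lambda$-invariant, $b\circ A=b+A$. Then
$$b^{-1}\circ A\circ b=\lambda_{b^{-1}}(A\circ b)$$
up to translation, and $A\circ b=A+b+(A\ast b\text{-terms})$. This computation does not obviously close up, so instead we pass to $\B$. Conjugation by $(0,b)$ in $\B$ maps $\mathfrak A=A_+\rtimes A_\circ$ to $\mathfrak A^{(0,b)}$. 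This conjugate contains $\mathfrak A_0^{(0,b)}=\lambda_{b^{-1}}(A)\times\{0\}=\mathfrak A_0$, since $\mathfrak A_0\unlhd\B$. So the real issue is comparing $A_\circ$ with its $B_\circ$-conjugates.

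\emph{Using homogeneity in $B_\circ$.} Here we apply Lemma \ref{l:Xhom} to the connected nilpotent group $G=B_\circ$. We take $\Sigma$ to be the group generated by $\lambda(B)$ together with inner multiplicative automorphisms, suitably interpreted, so that left chief factors correspond to $\Sigma$-composition factors. The key point is the radicable hypothesis. In a radicable binilpotent brace, every left chief factor $L/M$ is minimal divisible with trivial $\lambda$-action; this should follow from Lemma \ref{Minimality}, Lemma \ref{BLL_1}(1) and Lemma \ref{Kequi}, since a nontrivial linear action would produce a field whose multiplicative torsion contradicts the structure. Hence on each left chief factor, addition and multiplication agree modulo $M$. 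So a left chief series of $A$ is also a $B_\circ$-composition-type series of $A_\circ$ with definably isogenous factors. It follows that $A_\circ$ is $X$-$\Sigma$-homogeneous for the multiplicative structure. Lemma \ref{l:Xhom} then gives that $\prod_{b}A^b$ is a definable normal $X$-homogeneous subgroup of $B_\circ$.

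\emph{Closing the argument.} We show that $\prod_b A^b$ is $\lambda$-invariant and additively normal. Additive normality should follow from additive nilpotency and Lemma \ref{BLL_1}(1): the factors are additively central, so conjugates lie in $A+Z^+$-layers. Once this is established, maximality of $A$ gives $\prod_b A^b=A$, so $A$ is multiplicatively normal and hence an ideal.

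\emph{Consequence for minimal ideals.} A minimal infinite ideal $I$ is connected; take any left chief factor $X$ at the bottom of $I$. Then the $X$-homogeneous component of $I$ is an ideal of $B$ contained in $I$. This uses a relative version of the argument, or intersection with the $X$-component of $B$ together with the last item of Remark \ref{r:series}. By minimality that component equals $I$, so $I$ is homogeneous.

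\emph{Main obstacle.} The step I expect to be hardest is transferring homogeneity from the additive $\B$-structure to the multiplicative conjugation structure. This requires showing that left chief factors of a radicable binilpotent brace carry trivial $\lambda$-action, so that additive and multiplicative sections coincide up to isogeny.
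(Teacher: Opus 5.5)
There is a genuine gap. Your argument rests on the assertion that in a radicable binilpotent brace every left chief factor $L/M$ has trivial $\lambda$-action, and that assertion does not follow from the lemmas you cite. Lemma \ref{Minimality}, Lemma \ref{BLL_1}(1) and Theorem \ref{ZilberLin} only give that $L/M$ is minimal and additively central in $B/M$. If the action is nontrivial, they further give $L/M\cong K_+$ for a field $K$ of characteristic $0$, with $B_\circ/\ker(\lambda^{L/M})$ embedded in $K^\times$.

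Lemma \ref{Kequi} yields no contradiction here. $K^\times$ is itself radicable, and radicable connected nilpotent groups may contain Pr\"ufer torsion (Theorem \ref{sylow}). In fact the assertion cannot come this cheaply. Together with Lemma \ref{BLL_1}(1) it gives $[B,L]\subseteq M$ for every left chief factor, so every left chief series is a strong left central series and $B$ is left nilpotent of nilpotent type. That is exactly the main theorem of Section \ref{sec:5} in the radicable case, and its proof needs Proposition \ref{p:homogeneous}. In a minimal counterexample one finds a factor $L_j/L_{j+1}\cong K_+$ with nontrivial action, and the contradiction comes only from homogeneity of a minimal ideal.

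The later steps have independent problems. If $\Sigma$ contains the inner automorphisms of $B_\circ$, then the hypothesis of Lemma \ref{l:Xhom} that $A_\circ$ be a $\Sigma$-subgroup already says $A_\circ\unlhd B_\circ$, which is the conclusion. The $\lambda_b$ are automorphisms of $B_+$, not of $B_\circ$. Finally, neither $\lambda$-invariance nor additive normality of $\prod_b A^b$ is actually argued.

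The missing idea is to work locally down a chief series and use triviality only of $A$ acting on itself, never of $B$. The paper takes a left chief series $B=L_0>\cdots>L_n=A$ and shows that if $A$ is multiplicatively normal in $L_{i+1}$, then it is so in $L_i$. Since $L_{i+1}$ is an ideal of $L_i$ (Corollary \ref{SolBiNil}), expanding with $(x\circ y)\ast z=x\ast(y\ast z)+y\ast z+x\ast z$ gives $(x^{-1}\circ a\circ x)\ast y\in A$ for $x\in L_i$, $a\in A$, $y\in L_{i+1}$. Hence $x^{-1}\circ A\circ x\circ A=(x^{-1}\circ A\circ x)+A$ is a sub-brace of $L_{i+1}$ that is trivial modulo $A$.

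Radicability makes all chief factors minimal, so homogeneity can be taken with $\Sigma=\{1\}$. By Corollary \ref{lemsolrk3}, $A$ has an ideal series with minimal trivial factors, so $A_\circ$ is $X$-$\{1\}$-homogeneous. Hence so is that multiplicative product, and, by triviality modulo $A$, so is its additive group. Lemma \ref{l:Xhom} is then applied in $B_+$ with trivial $\Sigma$. A finite sum $\sum_b\lambda_b(H)$ produces a connected $X$-homogeneous strong left ideal containing $x^{-1}\circ A\circ x$, which equals $A$ by maximality.
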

\begin{proof}Let $X$ be a definable group such that all $B$-chief factors of $A$ are isogenous to $X$. Note that by radicability, all chief factors of $B$ are minimal. Let $B=L_0>L_1>\cdots>L_n=A$ be the beginning of a left chief series for $B$ passing through $A$.
	
\begin{claimm}For $i<n$, if $A$ is multiplicatively normal in $L_{i+1}$, it is multiplicatively normal in $L_i$.\end{claimm}
\begin{claimproof}
Since $L_i/L_{i+1}$ is radicable, it is minimal, and $L_{i+1}$ is a maximal connected strong left ideal in $L_i$. Recall that
$$(x\circ y)\ast z=x\ast(y\ast z)+y\ast z+x\ast z.$$
Consider $x\in L_i$, $a\in A$ and $y\in L_{i+1}$. Then $y_1=x\ast y\in L_{i+1}$, $a_1=a\ast y\in A$, $a_2=a\ast y_1\in A$, $a_3=x^{-1}\ast a_1\in A$ and $x^{-1}\ast a_2\in A$ since $L_{i+1}$ is an ideal in $L_i$ by Corollary \ref{SolBiNil}, and $A$ is an ideal in $L_{i+1}$ and a strong left ideal in $L_i$. Hence:
$$\begin{aligned}(x^{-1}\circ a\circ x)\ast y&=(x^{-1}\circ a)\ast (x\ast y) + x\ast y + (x^{-1}\circ a)\ast y\\
&=(x^{-1}\circ a)\ast y_1 + y_1 + x^{-1}\ast (a\ast y) + a\ast y + x^{-1}\ast y\\
&= x^{-1}\ast (a\ast y_1) + a\ast y_1 + x^{-1}\ast y_1 + y_1 + x^{-1}\ast a_1 + a_1 + x^{-1}\ast y\\
&= x^{-1}\ast a_2 + a_2 + x^{-1}\ast y_1 + y_1 + a_3 + a_1 + x^{-1}\ast y\in A,\end{aligned}$$
as
$$0=(x^{-1}\circ x)\ast y=x^{-1}\ast(x\ast y)+x\ast y+x^{-1}\ast y=x^{-1}\ast y_1+y_1+x^{-1}\ast y$$
and $A$ is additively normal in $L_i$.
Hence $(x^{-1}\circ A\circ x)\ast L_{i+1}\subseteq A$. But $x^{-1}\circ A\circ x$ is a normal multiplicative subgroup of $L_{i+1}$. It follows that $(x^{-1}\circ A\circ x)+A=x^{-1}\circ A\circ x\circ A$ is a skew sub-brace of $L_{i+1}$ containing $A$ and trivial modulo $A$.

Since all quotients in a $B$-left chief series of $A$ are minimal, $X$-$\B$-homogeneity implies $X$-$\{1\}$-homogeneity. But $A$ is strongly left soluble by Corollary \ref{lemsolrk3} and in particular has a series $A=A_0>A_1>\cdots>A_m$ such that
$A_{i+1}$ is an ideal in $A_i$ and $A_i/A_{i+1}$ is minimal trivial for all $i<m$. Then $(A_i/A_{i+1})_\circ=(A_i/A_{i+1})_+\isog X$ for all $i<m$ by Remark \ref{r:series}. In particular $A_\circ$ is $X$-$\{1\}$-homogeneous, as is $(x^{-1}\circ A\circ x\circ A)_\circ$. By triviality $(x^{-1}\circ A\circ x\circ A/A)_\circ=((x^{-1}\circ A\circ x)+ A/A)_+$ is $X$-$\{1\}$-homogeneous; as $A_+$ is $X$-$\{1\}$-homogeneous, so is $((x^{-1}\circ A\circ x)+ A)_+$. It is then contained in a connected $X$-$\{1\}$-homogeneous additive normal subgroup $H$ by Lemma \ref{l:Xhom}. But for any $b\in B$ the image $\lambda_b(H)$ is again normal $X$-$\{1\}$-homogeneous; by finiteness of Morley rank the sum $M:=\sum_{b\in B}\lambda_b(H)$ equals a finite subsum, which is a connected $X$-$\{1\}$-homogeneous strong left ideal by Remark \ref{r:series}. Again minimality of the $B$-chief factors implies that $M$ is $X$-$\B$-homogeneous, so $M=A$ by maximality of $A$. Thus $A$ is multiplicatively normal in $L_i$.
\end{claimproof}
The result follows: As $A$ is multiplicatively normal in $L_n=A$, it is multiplicatively normal in $L_0=B$, i.e.\ an ideal. For the last assertion, if $M$ is a minimal ideal, choose the last $B$-left chief factor $X$ of $M$. Then $X$ is minimal, and the $X$-homogeneous component of $B$ intersects $M$ non-trivially, and must contain $M$ by minimality of $M$. So $M$ is $X$-homogeneous.
\end{proof}

\begin{theorem}
Let $B$ be a connected skew brace of finite Morley rank. Then $B$ is binilpotent if and only if it is left nilpotent of nilpotent type.
\end{theorem}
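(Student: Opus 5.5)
One direction is immediate. If $B$ is left nilpotent of nilpotent type, then $\B$ is nilpotent by Theorem \ref{t:lnp-Bnp}. Both $B_+\cong\mathfrak B_0$ and $B_\circ\cong\{0\}\rtimes B_\circ$ are subgroups of $\B$, so $B$ is binilpotent. For the converse, by Theorem \ref{t:lnp-Bnp} it suffices to show that $\B$ is nilpotent, or equivalently that $B$ has a strong left central series. I would argue by induction on $\RM(B)$, reducing to a minimal ideal $I$ and proving $I\subseteq\ann(B)$, or at least $[B,I]$ finite and then trivial by connectedness. Given such $I$, the quotient $B/I$ is a connected binilpotent brace of smaller rank, so it is left nilpotent of nilpotent type. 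Since $[B,I]=\{0\}$, the preimage of a strong left central series of $B/I$, followed by $I>\{0\}$, is a strong left central series of $B$.

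First I would apply Theorem \ref{t:decomposition}, writing $B$ as an almost direct sum of ideals $N_p$ (bi-$p$, finite exponent) and $D$ (radicable). Since these ideals pairwise intersect finitely and are connected, the brace commutator $[N_p,N_q]$ and $[D,N_p]$ lie in the intersections, hence are finite and connected, hence trivial. So $B$ is a central-type product, and it suffices to treat the bi-$p$ case and the radicable case separately.

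\emph{Bi-$p$ case.} Here $\B=B_+\rtimes B_\circ$ is a connected soluble group of finite Morley rank and of $p$-power exponent, since both factors are $p$-groups of bounded exponent. I expect a connected definable group of bounded $p$-exponent of finite Morley rank to be nilpotent. This follows from the fact that connected soluble groups have nilpotent derived subgroup (Corollary \ref{NilpDerG}) together with the fact that the abelian action on each irreducible section would linearise by Theorem \ref{ZilberLin}. That linearisation would embed an infinite group of finite $p$-exponent into some $K^\times$, which Lemma \ref{Kequi}(ii) forbids. Concretely, each left chief factor $L/M$ is acted on by $\B$ via $\B/\B'$, which acts irreducibly; if the action were non-trivial we would get a finite exponent infinite subgroup of $K^\times$. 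So $\B$ acts trivially on every left chief factor, and the left chief series is central.

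\emph{Radicable case.} This is the main obstacle. Take a minimal ideal $I$. By Proposition \ref{p:homogeneous} it is $X$-homogeneous, and by Lemma \ref{BLL_1}(1) its last left chief factor is additively central. I would try to show that $\B$ acts trivially on every $B$-left chief factor $L/M$. If not, then by Theorem \ref{ZilberLin} there is a field $K$ with $L/M\cong K_+$ and $\B/C_\B(L/M)\hookrightarrow K^\times$. Since $\B$ is soluble and $B_+$ acts trivially by Lemma \ref{BLL_1}(1), the action factors through $B_\circ$ via $\lambda$. Then a radicable nilpotent image of $B_\circ$ sits inside $K^\times$, and I would aim to compare it with an additive section through maps $\ast_b$ (Lemma \ref{WelDefstar}) on consecutive factors, deriving an isogeny $K_+\isog$ a subgroup of $K^\times$. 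That contradicts Lemma \ref{Kequi}(iv); in positive characteristic radicability excludes $K_+$ outright. The delicate point is producing a \emph{definable} isogeny: I would use the multiplicative lower central series of $B_\circ$ together with $N_{B_\circ}$-arguments (Lemma \ref{l:NC}) as in Corollary \ref{SolBiNil}, choosing the last term acting non-trivially so that the relevant $\ast_b$ map is a homomorphism between minimal factors. Once all left chief factors are $\B$-central, the left chief series is a strong left central series.
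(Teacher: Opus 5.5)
Your easy direction, the reduction to the bi-$p$ and radicable cases via Theorem~\ref{t:decomposition}, and the bi-$p$ case are fine. For the reduction, the connectedness of the mixed brace commutators needs a word of justification. Alternatively, you can apply Fitting's theorem to the normal nilpotent subgroups $\mathfrak D,\mathfrak N_p$ of $\B$, as the paper does. In the bi-$p$ case you linearise each left chief factor and observe that an infinite quotient of $\B$ of bounded exponent cannot embed in $K^\times$. That is a valid and more self-contained substitute for the paper's appeal to local nilpotency of $p$-groups plus nilpotent-by-finiteness.

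\textbf{The gap is the radicable case.} This is the heart of the theorem, and you only state it as an aim. A factor $L/M\cong K_+$ with non-trivial action gives a radicable image of $B_\circ$ in $K^\times$, but that is no contradiction, since $K^\times$ is divisible. ``Comparing it with an additive section via $\ast_b$ on consecutive factors'' does not say which section, why it should be isogenous to the acting group, or why it should then be isogenous to $K_+$. Your opening plan of proving $[B,I]=0$ for a minimal ideal $I$ asks for more than you need. The paper does not establish it either: it only puts into $\ann(B)$ connected ideals that are minimal as strong left ideals. The right target is triviality of the $\B$-action on every left chief factor.

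The missing argument, as in the paper, runs as follows.
\begin{itemize}
\item Take a minimal counterexample, which is radicable. Let $I$ be a maximal proper connected strong left ideal; it is an ideal by Corollary~\ref{SolBiNil}, with $B/I$ trivial and minimal. Let $L\le I$ be a minimal connected ideal with $B$-left chief series $L=L_0>\cdots>L_n=\{0\}$.
\item By induction $I$ and $B/L$ are left nilpotent. So $I$ acts trivially on every $L_i/L_{i+1}$ (Lemma~\ref{BLL_1}), while some $L_j/L_{j+1}\cong K_+$ with $B/I$ isogenically embedded in $K^\times$.
\item Next, $L\ast B\not\subseteq L_1$; otherwise $L\subseteq\ann(B)$ and $B$ would be left nilpotent.
\item Choose $\ell\in L$ in a suitable term of the multiplicative central series, so that $[\ell,b]_\circ$ acts trivially modulo $L_1$. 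This makes the kernel of $x\mapsto \ell\ast x+L_1$ $\lambda$-invariant. That is what the central series is for; the homomorphism property is already given by Lemma~\ref{WelDefstar}.
\item The connected component $S$ of this kernel lies in $I$. Otherwise $S+I=B$, and $S$, left nilpotent by induction, would together with $I$ act trivially on $L_j/L_{j+1}$. Moreover $S=I$, since $B/S\isog L/L_1$ is minimal.
\item Hence the \emph{top} factor $L/L_1$ is isogenous to $B/I$, so it is of multiplicative type. Meanwhile the possibly distant factor $L_j/L_{j+1}$ is $K_+$.
\end{itemize}
What links these two factors is Proposition~\ref{p:homogeneous}: a minimal ideal is homogeneous, so all its $B$-left chief factors are definably isogenous, contradicting Lemma~\ref{Kequi}(iv). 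You cite Proposition~\ref{p:homogeneous} but never use it. Without it, or some substitute, nothing forces the factor hit by an $\ast$-map to be comparable with the factor carrying the non-trivial $\lambda$-action.
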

\begin{proof}
Suppose first that $B$ is left nilpotent of nilpotent type. Then $\B$ is nilpotent by Theorem \ref{t:lnp-Bnp}, so $B_+$ and $B_\circ$ are both nilpotent.

Conversely, assume that $B$ is a counterexample of minimal Morley rank, so both $B_+$ and $B_\circ$ are nilpotent, but $B$ is not left nilpotent. By Corollary \ref{lemsolrk3} any left chief series of $B$ is abelian, and $B$ is strongly left soluble. Moreover $B$ decomposes as a finite sum of bi-$p$ ideals $M_p$  of finite exponent for various primes $p$, and a radicable ideal $D$ by Theorem \ref{t:decomposition}.

Each $\mathfrak M_p$ is a soluble $p$-group, whence locally nilpotent, and nilpotent-by-finite by stability. By connectedness $\mathfrak M_p$ is nilpotent, so $M_p$ is left nilpotent of nilpotent type by Theorem \ref{t:lnp-Bnp}. As a finite sum of left nilpotent ideals of nilpotent type is again left nilpotent of nilpotent type by Lemma \ref{NormalG} and Theorem \ref{t:lnp-Bnp}, we may assume that $B=D$ is radicable.

If $B$ is of left chief length 1, it is trivial, whence left nilpotent. Otherwise let $I<B$ be a maximal connected strong left ideal; by Corollary \ref{SolBiNil} it is actually an ideal. Let $L\le I$ be a minimal connected ideal in $B$. Then $B>I\ge L>\{0\}$. Note that $B/I$ is a trivial minimal brace, and $I$ is left nilpotent as $\RM(I)<\RM(B)$. Let $L=L_0>L_1>\cdots>L_n=\{0\}$ be a $B$-left chief series for~$L$. Note that all factors $L_i/L_{i+1}$ are minimal by Lemma \ref{Minimality}, since $B$ is radicable.

\begin{claim}\label{claimeins}
$I\ast L_i\subseteq L_{i+1}$ for all $i<n$, but there is $j<n$ such that $B\ast L_j\not\subseteq L_{j+1}$. More precisely, $\ker(\lambda^{L_j/L_{j+1}})^0=I$ and there is an algebraically closed field $K$ such that $L_j/L_{j+1}\cong K_+$ and $(B/I)_\circ=(B/I)_+$ isogenically embeds into $K^\times$.
\end{claim}
\begin{claimproof}
$L_i/L_{i+1}$ is minimal, whence $I$-minimal. Hence $I\ast L_i\subseteq L_{i+1}$ by Lemma \ref{BLL_1}.

Suppose $B\ast L_i\subseteq L_{i+1}$ for all $i<n$. Since $B/L$ is left nilpotent by minimality of $\RM(B)>\RM(B/L)$, we see that $B$ is left nilpotent, a contradiction. Hence there is $j<n$ such that $B\ast L_j\not\subseteq L_{j+1}$. Hence $I\le\ker(\lambda^{L_j/L_{j+1}})<B$. By minimality of $B/I$, we have $\ker(\lambda^{L_j/L_{j+1}})^0=I$. The conclusion follows by Theorem \ref{ZilberLin}.
\end{claimproof}
\begin{claim} $L\ast B\not\subseteq L_1$.\end{claim}
\begin{claimproof}
Suppose $L\ast B\subseteq L_1$. Then $L_1$ is an ideal of $B$, so $L_1=\{0\}$ by minimality of $L$. It follows that $L\subseteq \ker(\lambda)$, and that $L$ is a minimal left ideal of $B$, whence $L\le Z^+(B)$ by Lemma \ref{BLL_1}. Moreover $L$ is trivial, whence multiplicatively minimal and normal in $B_\circ$, so $L\le Z^\circ(B)$ by Lemma \ref{center}. Thus, $L\subseteq\ann(B)$. Now $B/L$ is left nilpotent since $\RM(B/L)<\RM(B)$, so $B$ is left nilpotent, a contradiction.
\end{claimproof}
Now $L\ast B\subseteq L$ since $L$ is an ideal, and $L\le Z^+(B/L_1)$ by Lemma \ref{BLL_1}. By Lemma \ref{WelDefstar} for all $\ell\in L$ the map
$$\ast_\ell:x\mapsto \ell\ast x+L_1$$
is a well-defined homomorphism from $B_+$ to $L/L_1$. As $L\ast B\not\subseteq L_1$ and $B_\circ$ is nilpotent, there is a maximal $i$ such that  $\big(Z_{i-1}^\circ(B)\cap L\big)\ast B\subseteq L_1$, and $\ell\in\big(Z_i^\circ(B)\cap L\big)\setminus Z_{i-1}^\circ(B)$ such that $\ast_\ell$ is non-zero. Put $S=\ker(\ast_\ell)^0$.
\begin{claim}
$S$ is a connected proper strong left ideal of $B$ containing $L$.
\end{claim}
\begin{claimproof}
Clearly, $\ker(\ast_\ell)$ is a normal subgroup of $B_+$ containing $L$ since $L\ast L\subseteq I\ast L\subseteq L_1$ by Claim~\ref{claimeins}. But for $b\in B$ and $c\in\ker(\ast_\ell)$ we have $[\ell,b]\in Z_{i-1}^\circ(B)\cap L$ by normality of $L$, so
$$\lambda_\ell\lambda_b(c)=\lambda_b\lambda_\ell\lambda_{[\ell,b]}(c)\in\lambda_b\lambda_\ell(c+L_1)=\lambda_b\lambda_\ell(c)+L_1=\lambda_b(c)+L_1.$$
It follows that $\ker(\ast_\ell)$ is $\lambda$-invariant, whence a strong left ideal in $B$. It is proper by the choice of $\ell$. Therefore $S=\ker(\ast_\ell)^0$ is a connected proper strong left ideal in $B$.
\end{claimproof}

\begin{claim}$S\le I$.\end{claim}
\begin{claimproof}
Suppose not. Then $S\circ I=S+I=B$ since $B/I$ is minimal. Moreover $S$ is left nilpotent since $\RM(S)<\RM(B)$, so the $\lambda$-action of $S$ on all $S$-left chief factors must be trivial by Lemma \ref{BLL_1}. But $L_i/L_{i+1}$ is minimal for all $i<n$, whence an $S$-left chief factor. As the $\lambda$-action of $I$ on those factors is trivial, the $\lambda$-action of $B$ is trivial, contradicting Claim \ref{claimeins}.
\end{claimproof}

As $L/L_1$ is minimal, $\ast_\ell$ is surjective. So $B/S$ is minimal, whence $S=I$. Then $L/L_1$ is isogenous to a subgroup of $K^\times$, and cannot be isogenous to  $L_j/L_{j+1}\cong K_+$ by Lemma \ref{Kequi}. This shows that $L$ is not homogeneous, contradicting Proposition \ref{p:homogeneous}.
\end{proof}

\section{Conclusion and future work}\label{sec:6}

%The results obtained lay the foundations for a more profitable analysis of skew braces of finite Morley rank. 

Skew braces of Morley rank $2$ can actually be characterized as in the case of groups. This will be done in \cite{SkewSMR}.

\smallskip

A class of skew braces that falls outside the scope of our study is the class of those skew braces whose additive or multiplicative group is simple non-abelian. Examples of such skew braces appear when studying skew braces of Morley rank~$3$, since $\operatorname{PSL}_2(K)$ is a simple group of Morley rank $3$ for any algebraically closed field $K$ (up to isomorphism, these are the only non-soluble connected groups of Morley rank $3$ by \cite{frecon}). In \cite{SkewSMR}, we prove that the only non-bisoluble skew braces of Morley rank $3$ are trivial and almost trivial skew braces with additive group isogenous to $\operatorname{PSL}_2(K)$ for an algebraically closed field $K$. A general analysis of skew braces with simple additive/multiplicative group appears to be impossible at the moment.

\smallskip

Another direction of research is whether Theorem \ref{SolLA} can be strengthened in some ways. 

\begin{question}
    Let $B$ be a bisoluble connected skew brace of finite Morley rank of left chief length $3$. Is $B$ strongly left soluble? Or even soluble?
\end{question}

\begin{question}
    Let $B$ be a bisoluble connected skew brace of finite Morley rank. Assume that the left chief length is four. Is $B$ weakly soluble? For greater left chief lengths? And in general?
\end{question}

At present, we can only answer the previous question in the case of Morley rank $4$ (see~\cite{SkewSMR}).

\end{document}